\documentclass{article}
\usepackage{geometry}
\usepackage{graphicx} 
\usepackage{amsthm}
\usepackage{amsfonts}
\usepackage{amsmath}
\usepackage{amsthm}
\usepackage[
    colorlinks=true,
    linkcolor=blue,
    citecolor=blue,
    urlcolor=blue
]{hyperref}
\usepackage{braket}
\usepackage{bm}
\usepackage{mathtools}
\usepackage{appendix}
\usepackage{color}
\usepackage{subcaption}
\usepackage{algorithm}
\usepackage[sort&compress]{cleveref}
\usepackage{algpseudocode} 
\usepackage{amssymb}

\usepackage{float}
\usepackage{placeins}
\usepackage{indentfirst}

\crefrangeformat{equation}{(#3#1#4--#5#2#6)}
\Crefrangeformat{equation}{(#3#1#4--#5#2#6)}
\crefmultiformat{equation}{(#2#1#3}{--#2#1#3)}{--#2#1#3}{--#2#1#3)}
\Crefmultiformat{equation}{(#2#1#3}{--#2#1#3)}{--#2#1#3}{--#2#1#3)}

\DeclareMathOperator{\Span}{span}

\theoremstyle{definition} 
\newtheorem{definition}{Definition}[section]
\theoremstyle{theorem} 
\newtheorem{theorem}{Theorem}[section]
\theoremstyle{lemma} 
\newtheorem{lemma}{Lemma}[section]
\theoremstyle{proposition} 
\newtheorem{proposition}{Proposition}[section]
\theoremstyle{corollary} 
\newtheorem{corollary}{Corollary}[section]
\theoremstyle{remark} 
\newtheorem{remark}{Remark}[section]

\newcommand{\cC}{\ensuremath{\mathcal{C}}}
\newcommand{\cD}{\ensuremath{\mathcal{D}}}

\newcommand{\cF}{\ensuremath{\mathcal{F}}}

\newcommand{\cJ}{\ensuremath{\mathcal{J}}}

\newcommand{\cL}{\ensuremath{\mathcal{L}}}

\newcommand{\cN}{\ensuremath{\mathcal{N}}}

\newcommand{\cP}{\ensuremath{\mathcal{P}}}

\newcommand{\cT}{\ensuremath{\mathcal{T}}}

\newcommand{\stiefelframe}[1]{\ensuremath{\bm{#1}}}

\newcommand{\bL}{\ensuremath{\mathbb{L}}}

\newcommand{\bP}{\ensuremath{\mathbb{P}}}

\newcommand{\bR}{\ensuremath{\mathbb{R}}}
\newcommand{\bS}{\ensuremath{\mathbb{S}}}

\newcommand{\ambientspace}{\ensuremath{L^2_{\mu_0}}}
\newcommand{\bW}{\ensuremath{\mathbb{W}}}
\newcommand{\bX}{\ensuremath{\mathbb{X}}}

\DeclareRobustCommand{\bb}{{\bm{b}}}
\DeclareRobustCommand{\bc}{{\bm{c}}}
\DeclareRobustCommand{\be}{{\bm{e}}}
\DeclareRobustCommand{\bh}{{\bm{h}}}
\DeclareRobustCommand{\bv}{{\bm{v}}}

\DeclareRobustCommand{\bh}{{{\bm{h}}}}

\newcommand{\rD}{\ensuremath{\mathrm{D}}}

\newcommand{\rH}{\ensuremath{\mathrm{H}}}

\newcommand{\rK}{\ensuremath{\mathrm{K}}}

\newcommand{\rV}{\ensuremath{\mathrm{V}}}

\newcommand{\subdifferentialelement}{\ensuremath{\partial^{\circ}\cF}}

\def\[{\left[}
  \def\]{\right]}
\def\<{\langle}
\def\>{\rangle}
\def\({\left(}
    \def\){\right)}
\def\[{\left [}
  \def\]{\right]}
\def\({\left(}
    \def\){\right)}

\newcommand{\Tan}{\ensuremath{\mathrm{Tan}}}

\newcommand{\id}{\ensuremath{\text{id}}}

\NewDocumentCommand{\Vdec}{o}{\ensuremath{V_{\varphi\IfValueT{#1}{^{#1}}}}}

\NewDocumentCommand{\Wtwometric}{o o o}{%
  \ensuremath{%
    W_2%
    \IfValueT{#3}{%
      ^{#3}%
    }%
    \IfValueT{#1}{%
      \left(#1,#2\right)%
    }%
  }%
}
\newcommand{\Tmap}[2]{\ensuremath{T_{#1\to#2}}}
\newcommand{\Toptmap}[2]{\ensuremath{T^{\text{opt}}_{#1\to#2}}}
\newcommand{\pushfwd}[2]{\ensuremath{{#1}{\#}#2}}
\newcommand{\Ptwospace}[1][]{%
  \ensuremath{\mathcal{P}_2%
    \if\relax\detokenize{#1}\relax
    \else
      \left(#1\right)%
    \fi}
}
\newcommand{\Wtwospace}[1][]{%
  \ensuremath{\mathcal{W}_2%
    \if\relax\detokenize{#1}\relax
    \else
      \left(#1\right)%
    \fi}
}
\newcommand{\Ltwoweight}[1]{\ensuremath{L^2((\bR^d,#1),\bR^d)}}

\newcommand{\Ltwow}[1]{\ensuremath{L^2_{#1}}}
\newcommand{\Leb}[1]{\ensuremath{\mathcal{L}^{#1}}}
\newcommand{\dom}{\ensuremath{\text{dom}}}

\newcommand{\dec}{\ensuremath{\mathcal{T}}}
\newcommand{\proj}[2]{\ensuremath{\mathrm{P}^{#2}_{#1}}}

\DeclarePairedDelimiter{\prt}{(}{)}

\let\set\relax
\DeclarePairedDelimiter{\set}{\{}{\}}
\DeclarePairedDelimiter{\abs}{\lvert}{\rvert}
\DeclarePairedDelimiter{\norm}{\lVert}{\rVert}
\DeclarePairedDelimiter{\inner}{\langle}{\rangle}
\DeclarePairedDelimiter{\normRd}{\lvert}{\rvert}
\DeclarePairedDelimiter{\innerRd}{\langle}{\rangle}

\newcommand{\Id}{\ensuremath{\text{Id}}}

\newcommand{\opt}{\ensuremath{\mathrm{opt}}}
\newcommand{\Lin}{\ensuremath{\mathrm{Lin}}}
\newcommand{\grad}{\ensuremath{\mathrm{grad}}}
\renewcommand{\d}{\mathrm{d}}

\newcommand{\st}{\ensuremath{\text{ s.t.~}}}
\newcommand{\vel}{\ensuremath{\text{vel}}}

\newcommand{\St}{\text{St}}

\DeclareMathOperator*{\argmin}{arg\,min}

\newcommand{\rd}{\ensuremath{\mathrm d}}
\newcommand{\dx}{\ensuremath{\mathrm dx}}
\newcommand{\ds}{\ensuremath{\mathrm ds}}
\newcommand{\dt}{\ensuremath{\mathrm dt}}
\DeclareMathOperator*{\vspan}{span}

\newcommand{\Exp}{\text{Exp}}

\newcommand{\cond}{\; :\;}

\title{A Dynamical Approximation Scheme on the Stiefel manifold for  Wasserstein Gradient Flows
}
\author{Isabella Carla Gonnella, Olga Mula, Federico Pichi, Gianluigi Rozza}
\date{}

\begin{document}
\maketitle
\begin{abstract}
  We propose a meshless Lagrangian dynamical method for approximating Wasserstein gradient flows (WGFs).
  The evolving measure is represented as the pushforward of the initial measure $\mu_0$ through a transport map in the weighted Hilbert space $L^2_{\mu_0}$.
  We approximate this map in time-dependent linear subspaces of $L^2_{\mu_0}$, whose orthonormal frames are evolved by a Dirac--Frenkel dynamical principle on a Stiefel manifold constrained to a finite-dimensional background space, adaptively constructed via local approximations of the WGF velocity field.
  We prove that the resulting transport map induces an absolutely continuous curve of probability measures in Wasserstein space, whose velocity is obtained by projecting the exact WGF velocity onto the background space, and we show that the approximation preserves the energy dissipation structure up to the projection error of the velocity.
  Moreover, for geodesically convex energies, we derive an a posteriori estimate controlling such projection error through the adaptive construction of the background space, yielding as well a bound on the approximation error of the pushforward measure in the Wasserstein metric.
  Numerical experiments on linear and nonlinear Fokker--Planck equations, porous-medium diffusion, and interaction energies demonstrate the accuracy of the method, its energy-dissipation properties, and the advantages of the adaptive construction.
\end{abstract}


\section{Introduction}
\label{sec:introduction}

Wasserstein gradient flows have become increasingly relevant for modeling a wide range of phenomena arising in physics, quantum chemistry, finance, data-driven modeling, and machine learning \cite{Otto2001,CMV2003,neklyudov2023wasserstein,conger2024coupled,alvarezmelis2020dataset,caucheteux2026unifying, de2026flowing}.
Their mathematical theory is well established, yet continues to evolve, and has led to major advances in the analysis of dissipative PDEs \cite{JKO1998, Otto2001, AGS2008}.
This paper is concerned with their efficient and accurate numerical approximation, which is still a challenging problem due to the geometric and computational complexity of the Wasserstein space.

To the best of our knowledge, two main paradigms have emerged for their numerical approximation.
The first is based on the minimizing movement scheme (JKO) \cite{JKO1998}, which approximates the evolution through a time-marching procedure requiring the solution of an optimization problem at each time step.
It includes methods based on the Benamou-Brenier dynamical formulation \cite{BB2000,benamou2016augmented}, primal-dual optimization \cite{carrillo2022primal}, entropic regularization and Sinkhorn algorithms \cite{peyre2015entropic,carlier2017convergence}, and, more recently, machine-learning implementations of JKO-type schemes \cite{mokrov2021large,lee2024deep,caucheteux2026unifying}.
The second paradigm consists of so-called Lagrangian methods.
A prominent subclass is given by particle methods, which approximate the solution by empirical averages of Dirac measures supported on moving locations \cite{craig2016blob,carrillo2019blob,carrillo2018lagrangian}.
However, more generally, Lagrangian methods represent the WGF solution as the pushforward of the initial measure through a time-dependent transport map, whose evolution is obtained by solving a PDE, typically by finite volume or finite-element schemes \cite{carrillo2010numerical,carrillo2021lagrangian}.

Each paradigm has complementary strengths and weaknesses.
On the one hand, JKO methods preserve the energy-dissipation structure at the discrete level, but require the solution of an optimal-transport problem at each time step, which rapidly becomes computationally demanding beyond one spatial dimension.
On the other hand, Lagrangian methods avoid solving such optimization problems explicitly, but typically rely on mesh-based spatial discretizations, which suffer from the curse of dimensionality already in moderate dimension.
Particle methods are a notable exception, since they are naturally meshless; however, they produce empirical measures that are singular with respect to the Lebesgue measure and therefore usually require regularization, introducing additional analytical and numerical difficulties.

The objective of this work is to develop an efficient approximation framework for WGFs.
To this end, we start from the Lagrangian representation of the exact solution, given by:
\begin{align*}
  \mu_t=\pushfwd{T_t}{\mu_0}\qquad \text{for a.e.}\quad t\geq 0,
\end{align*}
where $T_t$ is the transport map at time $t$ and $\mu_0$ is the initial measure.
Equivalently, the measure $\mu_t$ is obtained by transporting the mass of $\mu_0$ through $T_t$, in the sense that
\begin{align*}
  \int_{\bR^d} f\prt{x}\,\d\mu_t\prt{x}
  =
  \int_{\bR^d} f\circ T_t\prt{x}\,\d\mu_0\prt{x},
  \qquad \forall f\in C_b\prt{\bR^d}.
\end{align*}
Rather than relying on mesh-based discretizations commonly used in Lagrangian approaches, we approximate the transport map $T_t$ by a parametric approximation of the form
\begin{align*}
  \dec_t=\varphi\prt{\theta_t},
\end{align*}
where $\varphi:\Theta\to L^2_{\mu_0}$ is a decoder map from a finite-dimensional parameter space $\Theta$ into the $L^2_{\mu_0}$ weighted space in which transport maps are defined (see \Cref{eq:L2weighted} for the precise definition of $L^2_{\mu_0}$).
The time-evolution of the parameters is determined through a Dirac-Frenkel dynamical approximation principle, yielding an ordinary differential equation for $\theta_t$ in $\Theta$.
In this way, the original infinite-dimensional WGF is reduced to a finite-dimensional ODE system for $\theta_t$, whose time integration yields the transport map approximation $\dec_t$ and hence the approximate measure
\begin{align*}
  \nu_t=\pushfwd{\dec_t}{\mu_0}.
\end{align*}

The Dirac-Frenkel principle is a very general paradigm for constructing dynamical approximations of evolution equations.
Its origin can be traced back at least to Dirac and Frenkel's works in quantum mechanics in the 1930's \cite{dirac1930note}. The idea  has subsequently appeared in several areas under different names, including low-rank approximation, and dynamical approximation. In problems connected to optimization or gradient flows, some keywords referring to this strategy are natural (Wasserstein) gradient flows \cite{LM2018, liu2020neural, zuo2024numerical, caucheteux2026unifying}, and (energy) natural gradient \cite{amari1998natural, AD1998, AN2000, AMS2008, NLY2023}. If the decoder $\varphi$ is parameterized by a neural network, the strategy is also called neural Galerkin \cite{bruna2022neural, LN2025, bon2025stable, feischl2026regularized} (we provide additional relevant literature at the beginning of \Cref{sec:dyn-aprox}).

Its application to WGFs and closely related transport-map gradient flows is only very recent, and existing works rely exclusively on neural-network parameterizations of the decoder $\varphi$ \cite{liu2020neural,zuo2024numerical,caucheteux2026unifying,dumont2026learning}.
The Dirac--Frenkel principle determines the parameter dynamics by matching the decoder-induced velocity with the original WGF velocity in a least-squares sense, which naturally requires weighted $L^2$ inner products.
In measure-based parametrizations, the corresponding weights evolve in time, possibly causing sampling instabilities and motivating dynamical sampling strategies \cite{bon2025stable}.
In the Lagrangian formulation used here, instead, these products are pulled back to the fixed initial measure $\mu_0$, so that no adaptive sampling distribution is needed.

The present work does not represent transport maps by neural networks, but instead uses dynamically evolving low-dimensional linear subspaces of $L^2_{\mu_0}$.
These subspaces are parameterized through orthonormal bases belonging to a Stiefel manifold (see \Cref{sec:stiefel-notions} for its definition and basic properties) constrained to a finite-dimensional background space.
The construction of the background space is a key ingredient of the proposed method: it is independent of any spatial discretization and can be updated adaptively at prescribed times.

The main contributions of this work are the following:

\begin{enumerate}

  \item \textbf{Abstract formulation:}
        We develop an abstract functional framework that rigorously connects the Lagrangian formulation of WGFs with the Dirac-Frenkel dynamical approximation principle.
        This formulation is novel for WGFs, and clarifies the mathematical structure underlying the proposed method.

  \item \textbf{Stiefel decoder and Taylor-based background spaces:}
        We introduce the Stiefel decoder, whose parameters consist of $n$ coefficients and an orthonormal frame of $n$ functions constrained to an $m$-dimensional background space, with $m\geq n$.
        This formalizes the Dirac--Frenkel principle on the Stiefel manifold of orthonormal frames in a function space, and provides a continuous counterpart of mesh-based dynamical low-rank approximations, where the evolving basis is made of genuine functions rather than vectors induced by a spatial discretization.
        We then propose a Taylor-based construction of the background space, using local information from the Lagrangian velocity operator, which can be kept fixed or updated adaptively.

  \item \textbf{Error analysis:}
        We prove that the method induces absolutely continuous curves of probability measures in Wasserstein space and preserves the energy-dissipation structure up to a projection error.
        For geodesically convex energies, we show that such projection error, and consequently the measure approximation error in the Wasserstein metric, can be made arbitrarily small by suitably updating the Taylor-based background space at prescribed times.

\end{enumerate}

The remainder of the paper is organized as follows. In
\Cref{sec:wasserstein-gradient-flows}, we recall the functional framework for WGFs needed throughout the paper, introduce the class of energies under consideration, and present their Lagrangian formulation in terms of transport maps. In
\Cref{sec:lagrangian-dynamical-approximation}, we introduce the abstract dynamical approximation principle and derive the projected evolution equation governing the parameter dynamics. \Cref{sec:stiefel} specializes this framework to the proposed Stiefel decoder and derives the resulting reduced dynamical system. In \Cref{sec:tb_background}, we introduce the Taylor-based background spaces and analyze the resulting adaptive strategy through residual and final-time error estimates. Finally, \Cref{sec:numerical-results} presents numerical experiments on linear and nonlinear Fokker-Planck equations, nonlinear diffusion, and interaction energies.

\section{Wasserstein Gradient Flows (WGF)}
\label{sec:wasserstein-gradient-flows}
In this section, we recall the notion of gradient flows in the $L^2$-Wasserstein space of measures  defined on $\bR^d$, and the main theoretical notions that are needed for our numerical scheme.
Throughout the paper, we fix a final time $t_f>0$ and consider evolutions on the time interval $[0,t_f]$.

\subsection{The $L^2$-Wasserstein space}
In the following, for any $x, y\in \bR^d$, we denote by $\innerRd{x,y}$ the Euclidean scalar product, and by $\normRd{x}$ the Euclidean norm.
Moreover, $\cP\prt{\bR^d}$ is the collection of all probability measures on $\bR^d$, and
\begin{equation*}
  \Ptwospace[\bR^d] ~\coloneqq~ \set[\big]{ \mu \in \cP\prt{\bR^d} \cond \mu\prt{\bR^d} = 1, \; \int_{\bR^d} \normRd{x}^2 \,\d\mu\prt{x} \;< +\infty }\, ,
\end{equation*}
is the subset of $\cP\prt{\bR^d}$ with finite second order moments. We equip $\Ptwospace[\bR^d]$ with the Wasserstein metric which is defined as
\begin{equation}
  \label{eq:W2-static}
  \Wtwometric[\mu][\nu] ~\coloneqq~ \mathop{\min}_{\pi \in \Pi\prt{\mu,\nu}} \prt[\Bigg]{ \int_{\bR^d \times \bR^d} \normRd{x-y}^2 \,\d\pi\prt{x,y} }^{1/2},\quad \forall \prt{\mu,\nu}\in \Ptwospace[\bR^d] \times \Ptwospace[\bR^d],
\end{equation}
where
$\Pi\prt{\mu,\nu} \coloneqq \set{ \pi \in \Ptwospace[\bR^d\times\bR^d] \cond \pi\prt{\cdot \times \bR^d} = \mu, \quad\pi\prt{\bR^d \times \cdot} = \nu}$
is the set of transport plans whose marginals are $\mu$ and $\nu$ respectively. For every $\prt{\mu,\nu}$, there exists at least one optimal coupling $\pi^{\textrm{opt}}\in \Pi\prt{\mu, \nu}$ in \Cref{eq:W2-static},
which, according to Brenier's theorem \cite{Brenier1991}, is uniquely induced by a transport map $\Toptmap{\mu}{\nu}:\bR^d\to \bR^d$ when $\mu$ has Lebesgue density, in such a way that $\pi^{\textrm{opt}} = \pushfwd{\prt{\id\times \Toptmap{\mu}{\nu}}}{\mu}$.
Furthermore, \Cref{eq:W2-static} is a distance over $\Ptwospace[\bR^d]$, and $\Wtwospace[\bR^d] \coloneqq \prt{\Ptwospace[\bR^d], \Wtwometric}$ is a separable, complete metric space (see Proposition $5.1$ and $7.1.5$ in \cite{AGS2008}).

The space $\Wtwospace[\bR^d]$ is said to have a pseudo-Riemannian structure.
Concretely, this means that it is possible to define a notion of absolutely continuous curves in $\Wtwospace[\bR^d]$, whose velocity fields, for each $\mu\in \Ptwospace[\bR^d]$, belong to the tangent space
\begin{align*}
  \Tan^{\vel}_{\mu}\Ptwospace[\bR^d]
  \coloneqq \overline{\{\nabla \Phi\cond \Phi\in \cC_c^\infty\prt{\bR^d}\}}^{\Ltwow{\mu}},
\end{align*}
defined as the closure w.r.t. the Hilbert space
\begin{align}
  \label{eq:L2weighted}
  \Ltwow{\mu} \coloneqq \Ltwoweight{\mu} = \set[\big]{  v:\bR^d\to\bR^d \cond \int_{\bR^d} \normRd{v\prt{x}}^2 \d\mu\prt{x} <\infty },
\end{align}
which is endowed with the inner product
$\inner{v,w}_{\Ltwoweight{\mu}} \coloneqq \int_{\bR^d} \innerRd{v\prt{x}, w\prt{x}} \d\mu\prt{x}$.

\subsection{Gradient Flows in the Wasserstein space}

Given an ambient space $\bX$, an energy functional $\cF:\bX\to (-\infty, +\infty]$, and an initial condition $\bar{u} \in \bX$, a gradient flow is defined as a curve $\prt{u_t}_{t\in [0,t_f]}\in \bX$ that decreases the energy $\cF$ as fast as allowed by the geometry of the space $\bX$. Such a statement translates into an evolution equation of the form
\begin{align*}
  \begin{cases}
    \partial_t u_t & = - \grad_{\bX} \cF\prt{u_t}, \quad t\in [0,t_f] \\
    u_0            & = \bar{u},
  \end{cases}
\end{align*}
where $\grad_{\bX} \cF\prt{u_t}$ denotes the gradient of $\cF$ at $u_t$. The notion of gradient needs to be appropriately defined depending on the nature of the space $\bX$.
However, in all cases, it is connected to the notion of (sub)differential via an Onsager operator \cite{mielke2023introduction} that involves the metric structure of the space (see \cite{mielke2023introduction} for more details in the case of Hilbert spaces and Riemannian manifolds).

When $\bX=\Wtwospace[\bR^d]$,
the subdifferential of $\cF$ at a given $\mu\in \Ptwospace[\bR^d]$ is defined as the set
\begin{align}\label{eq:subdifferential-definition}
  \partial \cF\prt{\mu} \coloneqq \set[\Big]{\xi \in \Ltwow{\mu} \cond \liminf_{\nu\rightharpoonup  \mu}  \frac{\cF\prt{\nu} - \cF\prt{\mu} -\int_{\bR^d\times \bR^d} \prt{\xi\prt{x}, y-x} \d\pi^{\textrm{opt}}\prt{x,y}}{\Wtwometric[\mu][\nu]} \geq 0 },
\end{align}
and, if nonempty, its element of minimal $\Ltwow{\mu}$ norm is denoted by $\partial^\circ\cF\prt{\mu}$.
The Onsager operator is defined as
\begin{align*}
  \rK\prt{\mu}:\Tan^{\vel}_{\mu}\Ptwospace[\bR^d] & \to  \Tan_{\mu}\Ptwospace[\bR^d]                                                                                            \\
  v                                               & \mapsto \rK\prt{\mu}\prt{v} \coloneqq -\nabla_x \cdot\prt{\mu v} \quad\text{(in the sense of distributions $\cD'(\bR^d)$)},
\end{align*}
where the tangent space at $\mu \in \Ptwospace[\bR^d]$ is given by
\begin{align*}
  \Tan_{\mu}\Ptwospace[\bR^d] \coloneqq \set[\big]{ s\in \cD'\prt{\bR^d} \cond s =- \nabla\cdot\prt{\mu v} \text{ holds in }\cD'\prt{\bR^d} \text{ for some }v\in \Tan^{\vel}_{\mu}\Ptwospace[\bR^d]}.
\end{align*}

In full generality, $\partial^\circ\cF\prt{\mu}\in \Ltwow{\mu}$ and nothing guarantees a priori that it belongs to the smaller set  $\Tan^{\vel}_{\mu}\Ptwospace[\bR^d]$. When $\partial^\circ\cF\prt{\mu}\in \Tan^{\vel}_{\mu}\Ptwospace[\bR^d]$, we can apply the Onsager operator and obtain a notion of Wasserstein gradient at $\mu$ as
\begin{align*}
  \grad_{\Wtwometric} \cF\prt{\mu} \coloneqq \rK\prt{\mu}\prt{\partial^\circ\cF\prt{\mu}}.
\end{align*}
To ensure that this conditions holds through our developments, in what follows we focus on energies of the form
\begin{equation}
  \label{eq:main-energy-functional}
  \cF\prt{\mu} \coloneqq
  \begin{cases}
    \int_{\bR^d} F\prt{x, \rho, \nabla \rho}\dx & \text{ if } \mu \in \dom\prt{\cF} \coloneqq \set{\rho \Leb{d} \cond \rho \in \cC^1\prt{\bR^d}} \\
    +\infty                                     & \text{ otherwise},                                                                             \\
  \end{cases}
\end{equation}
where $\Leb{d}$ denotes the Lebesgue measure on $\bR^d$, and
$F:\bR^d\times\bR_+\times\bR^d\to\bR_+$ is at least of class $\cC^2$ and
satisfies $F\prt{x,0,p}=0$ for every $\prt{x,p}\in\bR^d\times\bR^d$, so that vacuum
regions do not contribute to the energy.
For this class of functionals it holds that
\begin{equation*}
  \partial^\circ\cF\prt{\mu} \coloneqq
  \begin{cases}
    \nabla_x \prt[\Big]{\frac{\delta\cF}{\delta\mu}\prt{\mu}}, & \quad \text{ if } \mu \in \dom\prt{\cF} \\
    \emptyset,                                                 & \quad \text{otherwise,}
  \end{cases}
\end{equation*}
where the function $\frac{\delta\cF}{\delta\mu}\prt{\mu}$ is the first variation of $\cF$ at $\mu$, given by (see Definition 7.12 of \cite{Santambrogio2015})
$$
  \frac{\delta\cF}{\delta\mu}\prt{\mu}\prt{x} = F_z\prt{x, \rho, \nabla \rho} - \nabla_x \cdot F_p\prt{x, \rho, \nabla \rho},\quad \forall x\in \bR^d \text{ and with $\mu=\rho \Leb{d}$},
$$
and $F_z$ and $F_p$ denote the partial derivates of $F$ with respect to the second and third variable respectively.
Within this setting, by construction, $\partial^\circ\cF\prt{\mu}\in\Tan_\mu^{\vel}\Ptwospace[\bR^d]$ under sufficient regularity assumptions on $F$ and $\rho$.
Therefore, given an initial condition
$\bar{\mu}=\bar{\rho}\Leb{d}\in\dom\prt{\cF}$ and an energy functional of the form
\Cref{eq:main-energy-functional}, we say that a curve
$\prt{\mu_t}_{t\in[0,t_f]}$, with $\mu_t=\rho_t\Leb{d}$, solves the WGF if it
satisfies
\begin{equation}
  \label{eq:cont-eq-wgf}
  \begin{cases}
    \partial_t \mu_t & = \nabla_x \cdot\prt[\Big]{\mu_t \nabla_x \prt[\Big]{\frac{\delta\cF}{\delta\mu}\prt{\mu_t}}}, \quad \forall t\in[0,t_f], \\
    \mu_0            & = \bar{\mu},
  \end{cases}
\end{equation}
namely the continuity equation with velocity field
$v_{\mu_t}\prt{x}\coloneqq-\partial^\circ\cF\prt{\mu_t}\prt{x}
  =-\nabla_x\prt[\Big]{\frac{\delta\cF}{\delta\mu}\prt{\mu_t}}\prt{x}$, understood in the sense of distributions:
\begin{equation}
  \label{eq:continuity-eq-weak}
  \int_{\bR_+\times \bR^d} \prt[\big]{ \partial_t \varphi\prt{x,t}+v_{\mu_t}\prt{x}\cdot \nabla_x \varphi_{\mu_t}\prt{x} } \d\mu_t\prt{x}\dt =0,
  \quad \forall \varphi\in \cC_c^\infty\prt{\bR_+\times \bR^d}.
\end{equation}

Finally, in the setting of \Cref{eq:main-energy-functional}, with sufficient regularity assumptions on $F$ and $\rho$ (see Proposition $8.1.8$ in \cite{AGS2008}), the WGF admits a Lagrangian representation, i.e.\ there exists a curve of transport maps $\prt{T_t}_{t\in[0,t_f]}$, such that
$\mu_t = \pushfwd{T_t}{\mu_0}$, which is the unique solution to the \textit{Lagrangian system}
\begin{equation}
  \label{eq:push-fwd-evolution-all-regular}
  \begin{cases}
    \partial_t T_t\prt{x} & = v_{\mu_t}\circ T_t\prt{x}, \quad \prt{t, x}\in (0, t_f]\times \bR^d, \\
    T_0\prt{x}            & = x, \quad \forall x\in \bR^d.
  \end{cases}
\end{equation}
We underline that, since $v_{\mu_t}=-\partial^\circ\cF\prt{\mu_t}\in\Ltwow{\mu_t}$, the push-forward formula yields
$\partial_t T_t\in\Ltwow{\mu_0}$ for a.e.\ $t\in[0,t_f]$. Consequently, $T_t \in L^2_{\mu_0}$ for a.e.\ $t\in[0,t_f]$.
In the following, we use the shorthand notation
$\dot T_t$ to indicate $\partial_t T_t$.

\paragraph{Prototypical family of WGFs.} A particular case of \Cref{eq:main-energy-functional}, arising in numerous physical and biological applications, including models in granular media, material science, biology, and machine learning applications is given by energies of the form
\begin{equation}\label{eq:prototypical-energy}
  \cF\prt{\mu}
  =
  \begin{cases}
    \displaystyle
    \int_{\bR^d} U\prt{\rho\prt{x}}\,\dx
    +\int_{\bR^d} V\prt{x}\rho\prt{x}\,\dx
    +\frac12\int_{\bR^d} \prt{W* \rho}\prt{x}\rho\prt{x}\,\dx,
     & \text{if } \mu\in \dom\prt{\cF}, \\
    +\infty,
     & \text{otherwise,}
  \end{cases}
\end{equation}
with internal energy $U:[0,+\infty)\to(-\infty,+\infty]$, internal potential $V:\bR^d\to(-\infty,+\infty]$, and
interaction potential $W:\bR^d\to(-\infty,+\infty]$.
Under suitable convexity assumptions on $U$, $V$, and $W$
(see Section~9.3 in \cite{AGS2008}), such energies are $\lambda$-convex along
generalized geodesics. This property is central to our analysis, since it
yields stability estimates for the corresponding WGFs
(see \Cref{app:wasserstein-convexity}). In the rest of the paper, we focus on
examples belonging to this class, including linear and nonlinear
Fokker--Planck equations, porous-medium equations, and some interaction systems.
\section{A Lagrangian Dynamical Approximation Framework}
\label{sec:lagrangian-dynamical-approximation}
In this work we adopt a Lagrangian point of view for approximating WGFs: we replace the curve of transport maps $\prt{T_t}_{t\in[0,t_f]}$, which solves
\Cref{eq:push-fwd-evolution-all-regular}, by a curve of approximation maps
$\prt{\dec_t}_{t\in[0,t_f]}$. We then approximate the exact solution
$\mu_t=\pushfwd{T_t}{\mu_0}$ by the pushforward measure
$\nu_t \coloneqq \pushfwd{\dec_t}{\nu_0}$.
In our method, the initial measure is represented exactly, so that $\nu_0=\mu_0$.
However, this is not the case for all Lagrangian approaches.
In this section we first review the main Lagrangian approximation methods that have been proposed in the literature, and then introduce the adopted dynamical approximation framework.

\subsection{Lagrangian approximation methods}\label{subsec:lagrangian_methods}
Lagrangian approximation strategies have been widely explored in the literature.
Two main approaches are the following ones:
\begin{itemize}
  \item \textbf{Particle methods:} One fixes $N$ points $\{y_i\}_{i=1}^N$ in $\bR^d$ and replaces the initial
        measure $\mu_0$ by the empirical measure
        $\nu_0=\frac1N\sum_{i=1}^N\delta_{y_i}$.
        The Lagrangian evolution then transports each atom into a moving particle:
        if $x_i\prt{t}=T_t\prt{y_i}$, then
        $\pushfwd{T_t}{\delta_{y_i}}=\delta_{x_i\prt{t}}$. Hence one looks for a measure of
        the form $\nu_t=\frac1N\sum_{i=1}^N\delta_{x_i\prt{t}}$, where the particle locations are formally evolved according to the velocity $v_{\nu_t}\coloneqq-\subdifferentialelement\prt{\nu_t}$:
        \begin{align*}
          \begin{cases}
            \partial_t x_i\prt{t}
            =
            v_{\nu_t}\prt{x_i\prt{t}}, \\
            x_i\prt{0}=y_i,
          \end{cases}
          \qquad i=1,\dots,N.
        \end{align*}
        Its main limitation is that the approximation of $\mu_t$ is only available at the particle locations.
        Moreover, the empirical measure $\nu_t$ is singular with respect to the Lebesgue measure.
        For the class of energies considered in \Cref{eq:main-energy-functional}, this means that $\nu_t \notin \dom\prt{\cF}$ and therefore $\partial \cF\prt{\nu_t}=\emptyset$.
        Hence, the above particle dynamics is only formal, unless one introduces a regularization or mollification of $\nu_t$, which raises additional analytical and numerical difficulties.

  \item \textbf{Discretization on spatial meshes:} Another class of Lagrangian methods is based on finite element or finite volume discretizations of the transport map on a spatial mesh.
        In this setting, one typically approximates $T_t$ at each time $t\in[0,t_f]$ by piecewise polynomials, and evolves its degrees of freedom in time.
        The spatial mesh can be kept fixed, but there are also moving-mesh approaches in which the grid nodes are transported along the Lagrangian flow.
        Thus, the spatial discretization adapts to the possible concentration of the mass, and the density is reconstructed by enforcing mass conservation on each cell \cite{carrillo2018lagrangian,carrillo2021lagrangian}.
        The main drawback of mesh-based methods is that dealing with spatial grids becomes challenging already in moderate dimensions.
\end{itemize}
In this work, we propose a dynamical approximation framework that addresses
these limitations: it avoids spatial meshes, which is essential for reducing
the dimensional bottleneck of grid-based methods, while producing measure
approximations that can be evaluated throughout the whole domain, so that
densities, energies, and velocity fields remain accessible beyond a finite set
of particle locations.

\subsection{Dynamical approximation of the transport map}
\label{sec:dyn-aprox}
Dynamical approximation refers to a broad class of strategies introduced by Dirac and Frenkel \cite{dirac1930note,frenkel1934wave} and has attracted significant
interest across several fields. Given that the idea is very general, it has recently been (re)discovered and presented by different communities with different names, such as Dirac–Frenkel approximation (in physics and chemistry \cite{mclachlan1964variational,lubich2008quantum}), parametric approximations
(in scientific computing \cite{lee2020model,peherstorfer2020model}), dynamical low rank (for matrix approximation \cite{sapsis2009dynamically,koch2007dynamical}), neural Galerkin (for neural network
approximation \cite{bruna2022neural}), and natural gradient flows (for optimization and machine learning tasks \cite{amari1998natural,nurbekyan2023efficient}).
All these viewpoints are based on the same principle: the solution of the original system is
approximated by a decoder function which belongs to a parametrized set of functions. The parameters are
evolved so that the induced velocity best matches the velocity of the exact dynamics.

As anticipated in the introduction, to build a Lagrangian numerical scheme for WGFs we approximate $T_t$ with
\begin{align}\label{eq:ansatz}
  \dec_t\coloneqq \varphi\prt{\theta_t}, \quad \forall t\in [0, t_f],
\end{align}
where $\varphi:\Theta\to L^2_{\mu_0}$ is a decoder mapping from a finite-dimensional parameter space $\Theta$ into the function space $L^2_{\mu_0}$ in which transport maps are defined.
In full generality, $\Theta$ can be chosen as a finite-dimensional Riemannian manifold, although the most common choice is the flat space $\Theta=\bR^n$.

Well known approximation classes including finite elements, radial bases or neural networks can be written in the general language of decoders by choosing the appropriate parameter $\Theta$ and the appropriate definition of the mapping $\varphi$.
The image of the decoder is the parametized subset of the ambient space denoted by
\begin{align*}
  \Vdec \coloneqq \varphi\prt{\Theta}=\{ \varphi\prt{\theta} \cond \theta \in \Theta \} \subset \ambientspace,
\end{align*}
and represents the admissible approximations.
We assume that $\varphi$ is differentiable, so that its differential at a point $\theta\in \Theta$ is the linear mapping $\prt{D\varphi}_\theta \in \Lin\prt{\Tan_\theta \Theta, \ambientspace}$ which maps elements from the tangent space $\Tan_\theta \Theta$ to the ambient space $\ambientspace$.
We write the image set of this mapping as
$
  \Tan_{\varphi\prt{\theta}} \Vdec \coloneqq \prt{D\varphi}_\theta\prt{\Tan_\theta\Theta},
$
and define the adjoint $\prt{D\varphi}_\theta^*: \Tan_{\varphi\prt{\theta}}\Vdec \to \Tan_\theta \Theta$ as
\begin{equation}
  \label{eq:adjoint}
  \inner{ \prt{D\varphi}_\theta\prt{\dot \theta}, \dot v }_{\ambientspace}
  =
  \inner{ \dot \theta, \prt{D\varphi}_\theta^*\prt{\dot v} }_{\Tan_\theta \Theta},
  \quad \forall  \prt{\dot \theta, \dot v} \in \Tan_\theta \Theta \times \Tan_{\varphi\prt{\theta}} \Vdec.
\end{equation}


We now explain how to evolve $\dec_t$ by the Dirac-Frenkel principle. At $t=0$, we choose $\theta_0\in\Theta$ such that $\dec_0 = \varphi\prt{\theta_0}=\id$. This way, we perfectly match the initial condition with
$$
  \nu_0 \coloneqq \pushfwd{\dec_0}{\mu_0} = \mu_0.
$$
For later times $t>0$, the Dirac-Fenkel strategy consists in choosing $\dot{\dec}_t$ as the best approximation in $\Tan_{\varphi\prt{\theta_t}}\Vdec$ of $\dot{T}_t$. Assuming that $\prt{\theta_t}_{t\in[0,t_f]}\in \cC^1\prt{[0,t_f],\Theta}$, the velocity induced by the parametrization is
\begin{equation}
  \dot{\dec}_t
  =
  \frac{\d}{\dt}\varphi\prt{\theta_t}
  =
  \prt{D\varphi}_{\theta_t}\prt{\dot{\theta}_t}
  \in \Tan_{\varphi\prt{\theta_t}}\Vdec,
\end{equation}
and we choose $\dot{\dec}_t$ as the minimizer of the residual with respect to the Wasserstein velocity $v_{\nu_t}\coloneqq-\subdifferentialelement\prt{\nu_t}$:
\begin{equation}
  \label{eq:dirac-frenkel}
  \dot{\dec}_t
  =
  \prt{D\varphi}_{\theta_t}\prt{\dot{\theta}_t}
  =
  \argmin_{\dot v\in \Tan_{\varphi\prt{\theta_t}}\Vdec}
  \norm{\dot v-v_{\nu_t}\circ\dec_t}^2_{\ambientspace},
  \quad \forall t\in[0,t_f].
\end{equation}
Since we can write any $\dot v\in \Tan_{\varphi\prt{\theta_t}}\Vdec$ as $\prt{D\varphi}_{\theta_t}\prt{\dot \eta}$ for $\dot \eta \in \Tan_{\theta_t} \Theta$, we can alternatively express \Cref{eq:dirac-frenkel} as a minimization problem over the velocity in the parameter space as
\begin{equation}
  \label{eq:dirac-frenkel-params}
  \dot{\theta}_t
  =
  \argmin_{\dot \eta \in \Tan_{\theta_t}\Theta}
  \norm{\prt{D\varphi}_{\theta_t}\prt{\dot \eta}-v_{\nu_t}\circ\dec_t}^2_{\ambientspace},
  \quad \forall t\in[0,t_f].
\end{equation}
We may observe that \Cref{eq:dirac-frenkel} is a linear least-squares problem whose unique solution is
$$
  \dot{\dec}_t=\proj{\Tan_{\varphi\prt{\theta_t}}\Vdec}{\ambientspace}\prt{v_{\nu_t}\circ\dec_t},
$$
where $\proj{\Tan_{\varphi\prt{\theta_t}}\Vdec}{\ambientspace}:L^2_{\mu_0} \to \Tan_{\varphi\prt{\theta_t}} \Vdec$ denotes the orthogonal projection onto $\Tan_{\varphi\prt{\theta_t}} \Vdec$. The necessary optimality conditions read
\begin{equation}
  \label{equ:gradient_flow_Hilbert_DF_tested}
  \inner{\prt{D\varphi}_{\theta_t}\prt{\dot{\theta}_t}, \dot{v}}_{\ambientspace} = \inner{v_{\nu_t}\circ\dec_t, \dot{v}}_{\ambientspace}, \quad \forall \dot{v}\in \Tan_{\varphi\prt{\theta_t}}\Vdec.
\end{equation}
If $\Theta$ is a finite-dimensional Riemannian manifold with $\dim\prt{\Theta}=n$, then by taking a basis $\{e_i\}_{i=1}^n$, we get $\Tan_{\varphi\prt{\theta_t}}\Vdec = \vspan\set{\prt{D\varphi}_{\theta_t}\prt{e_1}, \dots, \prt{D\varphi}_{\theta_t}\prt{e_n}}$, and \Cref{equ:gradient_flow_Hilbert_DF_tested} is equivalent to
\begin{equation*}
  \inner[\big]{ \prt{D\varphi}_{\theta_t}^* \circ \prt{D\varphi}_{\theta_t}\prt{\dot{\theta}_t}, e_i }_{\Tan_{\theta_t}\Theta} = \inner{\prt{D\varphi}_{\theta_t}^* \prt{v_{\nu_t}\circ\dec_t}, e_i }_{\Tan_{\theta_t}\Theta}, \quad \forall i=1, \cdots, n.
\end{equation*}
Therefore, we obtain the following ODE evolution in the finite-dimensional parameter space $\Theta$:
\begin{equation}
  \label{equ:gradient_flow_Hilbert_normal}
  \prt{D\varphi}_{\theta_t}^* \circ \prt{D\varphi}_{\theta_t}\prt{\dot{\theta}_t} = \prt{D\varphi}_{\theta_t}^*\prt{v_{\nu_t}\circ\dec_t}, \quad \forall t\in[0, t_f].
\end{equation}

\section{Stiefel Decoder Dynamical Approximation}\label{sec:stiefel} 
In this section, we specialize the abstract Dirac--Frenkel framework introduced
above to the decoder used in this work, which we call the Stiefel decoder.
In \Cref{sec:stiefel-dyn-scheme}, we define it, describe its
parameter space, and derive the corresponding projected dynamical system.
Then, in \Cref{sec:stiefel-approx-properties}, we study the main approximation
properties of the induced curve of measures, including absolute continuity,
energy dissipation, and Wasserstein error estimates.




\subsection{Formulation of the dynamical scheme}\label{sec:stiefel-dyn-scheme}
In our work we focus on a decoder $\varphi$ based on the Stiefel manifold of $n$ orthonormal frames in the $L^2_{\mu_0}$ space, defined as
\begin{equation}
  \St\prt{n,L^2_{\mu_0}}
  \coloneqq
  \{
  \stiefelframe{v}=\{v_1,\dots, v_n\} \in \prt{\ambientspace}^n \cond \inner{v_i, v_j}_{L^2_{\mu_0}}=\delta_{i,j},\;\; 1\leq i,j\leq n
  \}.
\end{equation}
In \Cref{sec:stiefel-notions}, we recall the main concepts related to
its manifold structure: the tangent space containing admissible
infinitesimal variations of the frame, its horizontal--vertical decomposition
separating genuine changes of the represented subspace from rotations
inside the frame, the metric tensor $g_{\stiefelframe{v}}$ defining inner products
between tangent directions, and the exponential map $\Exp_{\stiefelframe{v}}$ moving frames along the
manifold while preserving orthonormality.
In practice, to work with $\St\prt{n,L^2_{\mu_0}}$ we need to restrict the frames to belong to a finite-dimensional, background subspace $\bW_m\subset L^2_{\mu_0}$ of dimension $m\geq n$, yielding the submanifold
\begin{equation}
  \St\prt{n,\bW_m; L^2_{\mu_0}}
  \coloneqq
  \{
  \stiefelframe{v}=\{v_1,\dots, v_n\} \in \bW_m \cond \inner{v_i, v_j}_{L^2_{\mu_0}}=\delta_{i,j},\;\; 1\leq i,j\leq n
  \}.
\end{equation}
Given a background space $\bW_m$, we define the Stiefel decoder as
\begin{align}
  \label{eq:decoder-stiefel-background-W}
  \varphi : \Theta                     & \to L^2_{\mu_0}                                     \\
  \theta = \prt{\bc, \stiefelframe{v}} & \mapsto \varphi\prt{\theta} = \sum_{i=1}^n c_i v_i.
\end{align}
Its image decoder is $\Vdec = \bW_m$, and the parameter space $\Theta \coloneqq \bR^n \times \St\prt{n,\bW_m;L^2_{\mu_0}}$ is itself a Riemannian manifold.
In particular, its tangent space at $\theta\in \Theta$ is $\Tan_\theta \Theta = \bR^n \times \Tan_{\stiefelframe{v}} \St\prt{n,\bW_m;L^2_{\mu_0}}$, which we endow with the metric tensor
\begin{align*}
  g_\theta\prt{\prt{\bh, \stiefelframe{w}}, \prt{\tilde \bh, \tilde{\stiefelframe{w}}}} \coloneqq \prt{\bh,\tilde \bh}+ g_{\stiefelframe{v}}\prt{\stiefelframe{w}, \tilde{\stiefelframe{w}}}, \quad \forall \prt{\bh, \stiefelframe{w}}, \prt{\tilde \bh, \tilde{\stiefelframe{w}}} \in \Tan_\theta \Theta,
\end{align*}
while its exponential map at $\theta=\prt{\bc, \stiefelframe{v}}$ is defined as the mapping
\begin{align*}
  \Exp_\theta: \Tan_\theta \Theta & \to \Theta                                                              \\
  \prt{\bh, \stiefelframe{w}}     & \mapsto \prt{\bc+\bh,\, \Exp_{\stiefelframe{v}}\prt{\stiefelframe{w}}}.
\end{align*}
The decoder $\varphi$ is differentiable, and its differential at a point $\theta=\prt{\bc, \stiefelframe{v}}\in \Theta$ is the linear mapping $\prt{\rD \varphi}_\theta \in \Lin\prt{\Tan_\theta \Theta, L^2_{\mu_0}}$ given by
\begin{align*}
  \prt{\rD \varphi}_\theta \prt{\prt{\bh, \stiefelframe{w}}} \coloneqq \sum_{i=1}^n \prt{c_i w_i + h_i v_i}, \quad \forall \prt{\bh, \stiefelframe{w}} \in \Tan_\theta \Theta.
\end{align*}
Finally, the decoder's tangent space is characterized as
\begin{equation}\label{eq:dec_tan_space}
  \Tan_{\varphi\prt{\theta}}\Vdec
  =
  \begin{cases}
    \vspan\set{\stiefelframe{v}}, & \text{if } \bc=0,     \\
    \bW_m,                        & \text{if } \bc\neq 0.
  \end{cases}
\end{equation}
For this particular form of the decoder, the dynamical approximation formula in \Cref{eq:dirac-frenkel-params} reads as
\begin{equation}
  \label{eq:least-squares_g}
  \dot \theta_t = \prt{\dot{\bc}_t, \dot{\stiefelframe{v}}_t}
  \;\in\;\;
  \argmin_{\substack{\prt{\bh, \stiefelframe{w}} \,\in\, \\\bR^n\times \Tan_{\stiefelframe{v}_t}\St\prt{n,\bW_m;L^2_{\mu_0}}}}\;
  \frac 1 2
  \norm[\Big]{\sum_{i=1}^n h_i v_{t,i} + c_{t,i} w_i -v_{\nu_t} \circ \dec_t}_{L^2_{\mu_0}}^2.
\end{equation}
In the following \Cref{thm:params-evolution}, we show that \Cref{eq:least-squares_g} leads to an explicit ODE that describes the evolution of the parameters in $\Theta$ for every $m\geq n$.
In fact, once this is proved, the parameters can be evolved on $\Theta$ by an explicit time-integration scheme on the manifold.
In particular, given a time step $\Delta t>0$ and the current parameter
$\theta_t=\prt{\bc_t,\stiefelframe{v}_t}$, let
$\dot\theta_t=\prt{\dot\bc_t,\dot{\stiefelframe{v}}_t}$ be a minimizer
of \Cref{eq:least-squares_g}. An explicit Euler step on $\Theta$
is then defined by
\begin{align*}
  \theta_{t+\Delta t}
   & \coloneqq
  \Exp_{\theta_t}\prt{\Delta t\,\dot\theta_t} \\
   & =
  \prt{
    \bc_t+\Delta t\,\dot\bc_t,\,
    \Exp_{\stiefelframe{v}_t}
    \prt{\Delta t\,\dot{\stiefelframe{v}}_t}
  }.
\end{align*}
Equivalently, writing
$\theta_{t+\Delta t}=\prt{\bc_{t+\Delta t},\stiefelframe{v}_{t+\Delta t}}$, the
updated transport map is
\begin{align*}
  \dec_{t+\Delta t}
  \coloneqq
  \varphi\prt{\theta_{t+\Delta t}}
  =
  \sum_{i=1}^n c_{t+\Delta t,i} v_{t+\Delta t,i}.
\end{align*}




\begin{theorem}
  \label{thm:params-evolution}
  For a.e.~$t>0$, the unique solution to \Cref{eq:dirac-frenkel} is
  \begin{align*}
    \dot{\dec}_t = \sum_{i=1}^n c_{t, i} \dot{v}_{t,i} +  \dot{c}_{t, i} v_{t,i}=\proj{\Tan_{\varphi\prt{\theta_t}}\Vdec}{\ambientspace}\prt{v_{\nu_t}\circ\dec_t} ,
  \end{align*}
  and the parameters $\prt{\bc_t, \bv_t}$ evolve following an ODE given by \Cref{eq:least-squares_g}, which admits minimizers satisfying for each $i\in\{1,\dots,n\}$:
  \begin{equation}
    \label{eq:params-evolution}
    \begin{cases}
      \dot{c}_{t, i}
       & =
      \inner{v_{\nu_t} \circ \dec_t, v_{t,i}}_{L^2_{\mu_0}}, \\
      \dot{v}_{t,i}
       & =
      \sum_{j=1}^{k} a_{t,ij} \psi_{t,j},
    \end{cases}
  \end{equation}
  where $k=m-n$, $\{\psi_{t,j}\}_{j=1}^{k}$ is an
  $L^2_{\mu_0}$-orthonormal basis of
  $\bL_{\stiefelframe{v}_t}\coloneqq\bW_m \cap \vspan\set{\stiefelframe{v}_t}^{\perp_{L^2_{\mu_0}}}$, and $A_t=\{a_{t,ij}\}_{i,j=1}^{n,k}\in\bR^{n\times k}$ is defined as follows:
  \begin{equation}
    \label{eq:A-star}
    A_t =
    \begin{cases}
      \text{any }A \in \bR^{n\times k},                                                                          & \text{if }\bc_t=0,     \\
      \frac{1}{\abs{\bc_t}^2} \bc_t \bb_t^\top + Z_t, \text{ with }Z_t \in \bR^{n\times k}\st Z_t^\top \bc_t =0, & \text{if }\bc_t\neq 0,
    \end{cases}
  \end{equation}
  with $\bb_t
    \coloneqq
    \{
    \inner{\proj{\bL_{\stiefelframe{v}_t}}{L^2_{\mu_0}}
    \prt{v_{\nu_t}\circ \dec_t},\psi_{t,j}}_{L^2_{\mu_0}}
    \}_{j=1}^k
    \in \bR^k$.
\end{theorem}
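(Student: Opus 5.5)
The plan is to solve the least-squares problem \Cref{eq:least-squares_g} explicitly, using an orthogonal splitting of the background space. Throughout, write $r_t\coloneqq v_{\nu_t}\circ\dec_t$ and $V_t\coloneqq\vspan\set{\stiefelframe{v}_t}$, so that $\bW_m=V_t\oplus\bL_{\stiefelframe{v}_t}$ orthogonally in $L^2_{\mu_0}$.

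The first part of the statement is the abstract fact from \Cref{sec:dyn-aprox}. Since \Cref{eq:dirac-frenkel} minimizes a strictly convex quadratic over the closed, finite-dimensional linear space $\Tan_{\varphi\prt{\theta_t}}\Vdec$, it has a unique minimizer. That minimizer is the orthogonal projection of $r_t$. Substituting the formula for $\prt{\rD\varphi}_\theta$ gives the expression $\sum_i c_{t,i}\dot v_{t,i}+\dot c_{t,i}v_{t,i}$.

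Next I use the standard description of the tangent space to the constrained Stiefel manifold (\Cref{sec:stiefel-notions}). Any $\stiefelframe{w}\in\Tan_{\stiefelframe{v}}\St\prt{n,\bW_m;L^2_{\mu_0}}$ decomposes as $w_i=\sum_j\Omega_{ij}v_j+\sum_{l}a_{il}\psi_l$, with $\Omega$ skew-symmetric and $A=(a_{il})\in\bR^{n\times k}$ arbitrary. The vertical part can be absorbed, because $\sum_i c_i\sum_j\Omega_{ij}v_j\in V_t$ is already reachable through the $\bh$-variables. I therefore restrict to horizontal directions, taking $\Omega=0$ as the paper's selection of minimizers. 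The residual then becomes
\[
\sum_i h_iv_i+\sum_l (A^\top\bc)_l\psi_l-r_t .
\]
By Pythagoras in $V_t\oplus\bL_{\stiefelframe{v}_t}\oplus\bW_m^\perp$, the objective splits into three terms: $\abs{\bh-\prt{\inner{r_t,v_i}}_i}^2$, then $\abs{A^\top\bc-\bb_t}^2$, and finally a constant, namely $\norm{r_t-\proj{\bW_m}{}r_t}^2$.

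Minimizing the first term gives $\dot c_{t,i}=\inner{r_t,v_{t,i}}$. For the second term, if $\bc_t=0$ it is constant in $A$, so every $A$ is a minimizer and the tangent space equals $V_t$, consistent with \Cref{eq:dec_tan_space}. If $\bc_t\neq0$, the linear map $A\mapsto A^\top\bc_t$ from $\bR^{n\times k}$ to $\bR^k$ is surjective, so the minimum value is zero. The solution set is then an affine space: one particular solution is $\bc_t\bb_t^\top/\abs{\bc_t}^2$, since $(\bc\bb^\top)^\top\bc=\abs{\bc}^2\bb$, and the kernel is $\{Z: Z^\top\bc_t=0\}$. This reproduces \Cref{eq:A-star}. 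In both cases the induced $\dot\dec_t$ equals $\proj{V_t}r_t+\proj{\bL_{\stiefelframe{v}_t}}r_t$ when $\bc_t\ne0$ (the projection onto $\bW_m$), and $\proj{V_t}r_t$ when $\bc_t=0$. This agrees with the first claim.

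The main obstacle is the justification of the tangent-space parametrization and the treatment of vertical directions. One must check that $\Tan_{\stiefelframe{v}}\St\prt{n,\bW_m;L^2_{\mu_0}}=\{\stiefelframe{w}\in\bW_m^n:\inner{w_i,v_j}+\inner{v_i,w_j}=0\}$, which holds because the constraint map is a submersion. One must also check that discarding $\Omega$ does not lose minimizers in terms of $\dot\dec_t$. The latter is immediate, because $\dot\dec_t$ is unique anyway. The qualifier "a.e.\ $t$" comes from needing $\theta_t$ to be differentiable in time and $r_t\in L^2_{\mu_0}$, both of which hold almost everywhere.
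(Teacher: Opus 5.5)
Your proposal is correct and follows essentially the same route as the paper's appendix proof: you absorb the skew-symmetric vertical part of the tangent frame into the coefficient velocity $\bh$ (the paper's lemma that horizontal directions suffice), then split the objective orthogonally over $\vspan\set{\stiefelframe{v}_t}$, $\bL_{\stiefelframe{v}_t}$ and the remainder, which decouples $\dot{\bc}_t$ from the linear system $A^\top\bc_t=\bb_t$ whose solution set is exactly the stated family. The only difference is cosmetic: you perform the three-way Pythagorean split in one step, whereas the paper does it in two (first $\vspan\set{\stiefelframe{v}}$ versus its orthogonal complement, then $\bL_{\stiefelframe{v}}$ inside that complement).
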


\begin{proof}
  We present the full proof in \Cref{sec:params-evolution} to avoid disrupting the main flow of the text.
\end{proof}

\begin{remark}\label{rmk:minimizer_rank1}
  The minimizers characterized in \Cref{thm:params-evolution} are not unique.
  In particular, since the minimizers are constructed in the horizontal subspace of the Stiefel tangent space, one can choose them with pairwise orthogonal components, so that the closed formula for the Stiefel exponential reported in \Cref{eq:exp_closed_formula} of \Cref{sec:stiefel-notions} is directly applicable.
  To this aim, assume that $\bc_t\neq 0$ and choose
  $i_0\in\{1,\dots,n\}$ such that $c_{t,i_0}\neq 0$, then the rank-one matrix
  \begin{align*}
    A_t
    =
    \frac{1}{c_{t,i_0}}\be_{i_0}\bb_t^\top
  \end{align*}
  satisfies $A_t^\top\bc_t=\bb_t$, thus being optimal.
  Moreover,
  $A_tA_t^\top
    =
    \frac{\abs{\bb_t}^2}{c_{t,i_0}^2}\be_{i_0}\be_{i_0}^\top$
  is diagonal, so that the corresponding horizontal frame has pairwise orthogonal components.
\end{remark}

\begin{remark}
  A direct consequence of \Cref{thm:params-evolution} is that the approximate transport map is constrained to the background space, namely $\dec_t\in\bW_m$ for all $t\in[0,t_f]$.
  Moreover, the ODEs in \Cref{eq:params-evolution} realize the projection of the pulled-back Wasserstein velocity onto the decoder tangent space: the coefficients $\dot c_{t,i}$ select the projection along $V_t\coloneqq\vspan\set{\stiefelframe{v}_t}$, while the frame velocities $\dot v_{t,i}$, through the matrix $A_t$, select the projection on $\bL_{\stiefelframe{v}_t}=\bW_m\cap\vspan\set{\stiefelframe{v}_t}^{\perp_{L^2_{\mu_0}}}$.
  Summing these two orthogonal contributions and recalling \Cref{eq:dec_tan_space} yields
  \begin{align}\label{eq:projected_dyn}
    \dot\dec_t =
    \begin{cases}
      \proj{V_t}{L^2_{\mu_0}}
      \prt{v_{\nu_t}\circ\dec_t},
       & \text{if } \bc_t=0,      \\[0.3em]
      \proj{\bW_m}{L^2_{\mu_0}}
      \prt{v_{\nu_t}\circ\dec_t},
       & \text{if } \bc_t\neq 0 .
    \end{cases}
  \end{align}
\end{remark}

\subsection{Continuity, dissipation, and error control}\label{sec:stiefel-approx-properties}
In this section, we establish the main properties of the dynamical approximation scheme introduced in \Cref{sec:stiefel-dyn-scheme} for WGFs.
First, in \Cref{proposition:proj_continuity_eq}, we show that an absolutely continuous curve of approximate transport maps in $L^2_{\mu_0}$ induces an absolutely continuous curve of pushforward measures in $\Wtwospace[\bR^d]$.
Then, in \Cref{thm:gronwall}, we derive a Gronwall-type estimate for the Wasserstein error between the exact WGF solution and its projected approximation. The error bound involves computable quantities and can be used as an a posteriori error estimator.
Finally, in \Cref{prop:projected-energy-decay}, we prove that the approximation preserves energy dissipation up to the tangent-space projection error.
The assumptions on the approximate transport-maps required by these results, e.g.\ absolute continuity of the curve of maps, are verified in \Cref{sec:tb_background} for the specific Taylor-based construction of the background space.

\begin{proposition}[Absolute continuity induced by the pushforward]
  \label{proposition:proj_continuity_eq}
  Let $\nu_0\in\Ptwospace[\bR^d]$, and let $\prt{\dec_t}_{t\in[0,t_f]} \in AC\prt{[0,t_f],L^2_{\nu_0}}$ be a curve of diffeomorphisms solving \Cref{eq:projected_dyn} such that $\nu_t=\pushfwd{\dec_t}{\nu_0}$.
  Then $\nu_t\in AC\prt{[0,t_f],\Ptwospace[\bR^d]}$.
\end{proposition}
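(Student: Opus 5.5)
The plan is to bound the Wasserstein distance between $\nu_s$ and $\nu_t$ directly by the $L^2_{\nu_0}$ distance between the transport maps. For $0\le s\le t\le t_f$, consider the coupling
\begin{align*}
  \pi_{s,t}\coloneqq \pushfwd{\prt{\dec_s\times\dec_t}}{\nu_0}\in\Pi\prt{\nu_s,\nu_t}.
\end{align*}
Its marginals are $\pushfwd{\dec_s}{\nu_0}=\nu_s$ and $\pushfwd{\dec_t}{\nu_0}=\nu_t$. Since $\Wtwometric[\nu_s][\nu_t]$ is a minimum over couplings in \Cref{eq:W2-static}, we get
\begin{align*}
  \Wtwometric[\nu_s][\nu_t]\le \prt[\Big]{\int_{\bR^d}\normRd{\dec_s\prt{x}-\dec_t\prt{x}}^2\,\d\nu_0\prt{x}}^{1/2}=\norm{\dec_s-\dec_t}_{L^2_{\nu_0}}.
\end{align*}

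Before this, we must check that $\nu_t\in\Ptwospace[\bR^d]$ for every $t$. This follows from $\int\normRd{y}^2\d\nu_t=\norm{\dec_t}^2_{L^2_{\nu_0}}<\infty$, which holds because $\dec_t\in L^2_{\nu_0}$ along an absolutely continuous curve in that space.

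Next we use absolute continuity of $t\mapsto\dec_t$ in the Hilbert space $L^2_{\nu_0}$. Hilbert spaces have the Radon--Nikodym property, so $\dec$ is a.e. differentiable with $\dot\dec\in L^1\prt{[0,t_f],L^2_{\nu_0}}$ and $\dec_t-\dec_s=\int_s^t\dot\dec_r\,\d r$ as a Bochner integral. Alternatively, we can simply invoke the metric definition of absolute continuity: there is $m\in L^1\prt{0,t_f}$ with
\begin{align*}
  \norm{\dec_t-\dec_s}_{L^2_{\nu_0}}\le\int_s^t m\prt{r}\,\d r.
\end{align*}
Combining this with the coupling inequality gives $\Wtwometric[\nu_s][\nu_t]\le\int_s^t m\prt{r}\,\d r$, which is exactly the definition of $\nu\in AC\prt{[0,t_f],\Ptwospace[\bR^d]}$ (Definition 1.1.1 in \cite{AGS2008}). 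This also yields the metric derivative bound $\abs{\dot\nu}\prt{t}\le\norm{\dot\dec_t}_{L^2_{\nu_0}}$ for a.e. $t$.

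Using \Cref{eq:projected_dyn}, we can go further and identify the velocity. The diffeomorphism assumption makes $w_t\coloneqq\dot\dec_t\circ\dec_t^{-1}$ well defined, and $\norm{w_t}_{L^2_{\nu_t}}=\norm{\dot\dec_t}_{L^2_{\nu_0}}$. Differentiating $\int f\circ\dec_t\,\d\nu_0$ for $f\in\cC_c^\infty$ then shows that $\prt{\nu_t,w_t}$ solves the continuity equation in the weak sense \Cref{eq:continuity-eq-weak}. By Theorem 8.3.1 of \cite{AGS2008}, this gives an alternative route to the same conclusion and explains the role of the projected velocity.

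The only delicate point is the measurability and integrability of $\norm{\dot\dec_t}_{L^2_{\nu_0}}$. This is not needed for the main argument, since the coupling estimate uses only the metric definition of absolute continuity in $L^2_{\nu_0}$.
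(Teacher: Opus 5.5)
Your proof is correct, and it follows a different and more elementary route than the paper's. You use the induced plan $\pushfwd{(\dec_s\times\dec_t)}{\nu_0}$ to show that $T\mapsto\pushfwd{T}{\nu_0}$ is $1$-Lipschitz from $L^2_{\nu_0}$ into $(\Ptwospace[\bR^d],W_2)$. Absolute continuity then transfers at once, together with the bound $\abs{\nu'}(t)\le\norm{\dot\dec_t}_{L^2_{\nu_0}}$. This argument uses neither the projected dynamics \eqref{eq:projected_dyn} nor the diffeomorphism hypothesis, so it shows that both are superfluous for the stated conclusion.

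The paper instead differentiates $\int\phi\circ\dec_t\,\d\nu_0$. It then uses the pullback identity \eqref{eq:projection-pullback}, which is where invertibility of $\dec_t$ enters, to show that $\nu_t$ solves the continuity equation with the specific field $\tilde v_{\nu_t}=\proj{\bS_t^v}{L^2_{\nu_t}}(v_{\nu_t})$. It checks that $\norm{\tilde v_{\nu_t}}_{L^2_{\nu_t}}=\norm{\dot\dec_t}_{L^2_{\nu_0}}$ is integrable in time, and concludes with Theorems 8.3.1 and 8.5.1 of \cite{AGS2008}.

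What the heavier route buys is this identification of the velocity field. The paper relies on it afterwards: in the proof of Theorem~\ref{thm:gronwall}, through the two-curve estimate of Lemma~\ref{lemma:dt_W2}, and in Proposition~\ref{prop:projected-energy-decay}, through the chain rule. Your secondary remark produces the field $w_t=\dot\dec_t\circ\dec_t^{-1}$. If you want your argument to serve the same downstream purpose, add the one-line identification $w_t=\proj{\bS_t}{L^2_{\nu_0}}(v_{\nu_t}\circ\dec_t)\circ\dec_t^{-1}=\tilde v_{\nu_t}$, which follows from \eqref{eq:projected_dyn} and the pullback identity.
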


\begin{proof}
  To keep the notation compact, we write the projected dynamics in terms of a time-dependent closed linear subspace $\bS_t\subset L^2_{\nu_0}$.
  Depending on the regime in \Cref{eq:projected_dyn}, this subspace is either the background space $\bW_m$ or the current frame space $V_{t,n}\coloneqq\vspan\set{\stiefelframe{v}_t}$.
  Moreover, by the fact that the pullback operator maps $\bS_t^v\coloneqq
    \set{w\circ\dec_t^{-1}\cond w\in\bS_t}$ onto $\bS_t$ and it is unitary, we get that
  \begin{align}\label{eq:projection-pullback}
    \proj{\bS_t}{L^2_{\nu_0}}\prt{z\circ\dec_t}
    =
    \proj{\bS_t^v}{L^2_{\nu_t}}\prt{z}\circ\dec_t,
    \qquad
    \forall z\in L^2_{\nu_t}.
  \end{align}
  Let $\phi\in C_c^\infty\prt{\bR^d}$, and define
  \begin{align*}
    F_t:=\int_{\bR^d} \phi\prt{x}\,\d\nu_t\prt{x}=\int_{\bR^d} \phi\circ\dec_t\prt{x}\,\d\nu_0\prt{x}.
  \end{align*}
  Since $\prt{\dec_t}_{t\in[0,t_f]}\in AC\prt{[0,t_f];L^2_{\nu_0}}$ by assumption, the chain rule gives, for $\mathcal L^1$-a.e.\ $t\in\prt{0,t_f}$,
  \begin{align*}
    \frac{\d}{\dt} F_t
     & =
    \int_{\bR^d} \inner{\nabla\phi \circ \dec_t,\, \dot{\dec}_t} \d\nu_0\prt{x}                                                                                                 \\
     & =
    \int_{\bR^d} \inner[\big]{\nabla\phi \circ \dec_t,\, \proj{\bS_t}{L^2_{\nu_0}}\prt{v_{\nu_t}\circ \dec_t}} \d\nu_0\prt{x}                                                   \\
     & =
    \int_{\bR^d} \inner[\big]{\nabla\phi \circ \dec_t,\, \proj{\bS_t^v}{L^2_{\nu_t}}\prt{v_{\nu_t}}\circ \dec_t} \d\nu_0\prt{x} \quad \text{(by \Cref{eq:projection-pullback})} \\
     & =
    \int_{\bR^d} \inner{\nabla\phi ,\, \tilde{v}_{\nu_t}} \d\nu_t\prt{x}.
  \end{align*}
  Multiplying by a smooth compactly supported function of time and integrating over $\prt{0,t_f}$ gives the weak continuity equation for separated test functions, and the extension to arbitrary $\varphi\in C_c^\infty\prt{\prt{0,t_f}\times\bR^d}$ follows by density.
  Finally, since \Cref{eq:projected_dyn} holds, for
  $\mathcal L^1$-a.e.\ $t\in[0,t_f]$ we get
  \begin{align*}
    \tilde{v}_{\nu_t}\circ\dec_t
    =
    \proj{\bS_t}{L^2_{\nu_0}}\prt[\Big]{
      v_{\nu_t}\circ\dec_t
    }
    =
    \dot{\dec}_t .
  \end{align*}
  Morover, given by assumption that $\dec\in AC\prt{[0,t_f];L^2_{\nu_0}}$, it follows that the mapping
  $t\mapsto\norm{\dot{\dec}_t}_{L^2_{\nu_0}}$ belongs to $L^1\prt{0,t_f}$.
  Hence
  $\prt{\tilde{v}_{\nu_t}}_{t\in[0,t_f]}\in L^1\prt{[0,t_f],L^2_{\nu_t}}$, so that, by Theorems 8.3.1 and 8.5.1 in \cite{AGS2008}, the weak continuity equation obtained above defines an absolutely continuous curve in $\Wtwospace[\bR^d]$.
\end{proof}

\begin{theorem}[A Gronwald-type a posteriori error bound]\label{thm:gronwall}
  Let $\cF$ be a $\lambda$-geodesically convex functional on $\Ptwospace[\bR^d]$, $\prt{\mu_t}_{t\in[0,t_f]}$ be the corresponding WGF, and $\prt{\dec_t}_{t\in[0,t_f]}\in AC\prt{[0,t_f],L^2_{\mu_0}}$ be a curve of diffeomorphisms solving \Cref{eq:projected_dyn} such that $\nu_t=\pushfwd{\dec_t}{\mu_0}$.
  Then, for a.e.~$t\in[0,t_f]$ and given the residual
  $$
    r_t\coloneqq v_{\nu_t}\circ\dec_t-\dot{\dec}_t,$$
  the following holds:
  \begin{align*}
    \Wtwometric[\mu_t][\nu_t]
    \leq
    \int_0^t e^{-\lambda\prt{t-s}}\norm{r_s}_{L^2_{\mu_0}}\ds .
  \end{align*}
\end{theorem}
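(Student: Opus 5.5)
Our plan is to treat $(\nu_t)$ as a perturbed gradient flow and compare it with the exact WGF $(\mu_t)$ through the evolution variational inequality (EVI) satisfied by $\lambda$-convex flows (\Cref{app:wasserstein-convexity}). By \Cref{proposition:proj_continuity_eq}, $(\nu_t)$ is absolutely continuous in $\Wtwospace[\bR^d]$ and solves the continuity equation with velocity $\tilde v_{\nu_t}$, where $\tilde v_{\nu_t}\circ\dec_t=\dot\dec_t$. We can therefore write
\begin{align*}
\tilde v_{\nu_t}=v_{\nu_t}-e_t,\qquad e_t\coloneqq r_t\circ\dec_t^{-1},\qquad \norm{e_t}_{L^2_{\nu_t}}=\norm{r_t}_{L^2_{\mu_0}},
\end{align*}
where the last identity holds because $\dec_t$ is a diffeomorphism pushing $\mu_0$ to $\nu_t$. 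Thus $\nu_t$ is a gradient-flow curve with an additive velocity perturbation, and the residual norm is exactly the computable quantity $\norm{r_t}_{L^2_{\mu_0}}$.

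Next we differentiate $\tfrac12 W_2^2(\mu_t,\nu_t)$ along both curves, using the derivative formula for the squared distance along absolutely continuous curves (Theorem 8.4.7 / Proposition 8.5.4 in \cite{AGS2008}). Let $\gamma$ be an optimal plan between $\mu_t$ and $\nu_t$; it is induced by a map since $\mu_t$ has a density. For a.e.\ $t$,
\begin{align*}
\frac{\d}{\dt}\frac12 W_2^2(\mu_t,\nu_t)=\int\innerRd{x-y,\,v_{\mu_t}(x)-\tilde v_{\nu_t}(y)}\,\d\gamma(x,y).
\end{align*}
We split $\tilde v_{\nu_t}=v_{\nu_t}-e_t$. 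The term involving $v_{\mu_t}(x)-v_{\nu_t}(y)=-(\partial^\circ\cF(\mu_t)(x)-\partial^\circ\cF(\nu_t)(y))$ is bounded by $-\lambda W_2^2(\mu_t,\nu_t)$. This is the monotonicity of the subdifferential of a $\lambda$-convex functional: sum the two subdifferential inequalities (Theorem 10.3.6 / Lemma 10.3.8 of \cite{AGS2008}, or the appendix result), noting that $\partial^\circ\cF(\nu_t)=-v_{\nu_t}$ is a genuine element of the strong subdifferential because $\nu_t\in\dom(\cF)$. The remaining term we bound by Cauchy--Schwarz, $\int\innerRd{x-y,e_t(y)}\,\d\gamma\le W_2(\mu_t,\nu_t)\norm{e_t}_{L^2_{\nu_t}}$. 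Writing $D_t=W_2(\mu_t,\nu_t)$, this yields
\begin{align*}
\frac{\d}{\dt}\frac12 D_t^2\le-\lambda D_t^2+D_t\norm{r_t}_{L^2_{\mu_0}}.
\end{align*}

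To conclude, we pass to $D_t$ itself, obtaining $\dot D_t\le-\lambda D_t+\norm{r_t}_{L^2_{\mu_0}}$ wherever $D_t>0$. The points where $D_t=0$ are handled by the standard regularization $\sqrt{D_t^2+\varepsilon}$ followed by $\varepsilon\to0$. Multiplying by $e^{\lambda t}$, integrating from $0$ to $t$, and using $D_0=0$ (because $\nu_0=\mu_0$) gives the claimed bound.

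We expect the main obstacle to be the rigorous justification of the monotonicity step. The plan uses the strong subdifferential inequality at both $\mu_t$ and $\nu_t$ with the optimal plan, which requires $\cF$ to be $\lambda$-convex along the relevant geodesics (or generalized geodesics) and $\partial^\circ\cF(\nu_t)$ to be a strong subdifferential. Our first attempt will be to invoke \cite[Thm.~10.3.6]{AGS2008} for $\nu_t$, which is legitimate since $\nu_t$ is absolutely continuous as the diffeomorphic pushforward of $\mu_0$. If that fails, the fallback is to use the EVI for $\mu_t$ tested against $\nu_t$ and the subdifferential inequality for $\nu_t$ tested against $\mu_t$, then add the two.
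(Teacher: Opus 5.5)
Your proposal is correct and follows essentially the same route as the paper. Both arguments take the continuity equation for $\nu_t$ with the projected velocity from \Cref{proposition:proj_continuity_eq} and differentiate $\frac12 W_2^2(\mu_t,\nu_t)$ along both curves, then split off $v_{\nu_t}$: $\lambda$-monotonicity of the subdifferential bounds that part by $-\lambda W_2^2$, Cauchy--Schwarz with $\norm{v_{\nu_t}-\tilde v_{\nu_t}}_{L^2_{\nu_t}}=\norm{r_t}_{L^2_{\mu_0}}$ bounds the rest, and Gronwall from $W_2(\mu_0,\nu_0)=0$ concludes; your $\sqrt{D_t^2+\varepsilon}$ regularization only makes the paper's one-line Gronwall step more explicit, and the EVI framing you open with is not actually needed.
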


\begin{proof}
  By \Cref{proposition:proj_continuity_eq}, the curve $\prt{\nu_t}_{t\in[0,t_f]}$ satisfies the
  continuity equation for the velocity field
  $\tilde{v}_{\nu_t}
    =
    \proj{\bS_t^v}{L^2_{\nu_t}}
    \prt{v_{\nu_t}}$.
  Hence, applying the two-curve derivative estimate in \Cref{lemma:dt_W2} to
  $\mu_t$ and $\nu_t$, and using the sign convention of the continuity equation
  above, we get
  \begin{align*}
    \frac{\d}{\dt}\frac12 \Wtwometric[\mu_t][\nu_t][2]
     & \leq
    \int_{\bR^d}
    \inner[\big]{
      v_{\mu_t}\prt{x}
      -
      \tilde{v}_{\nu_t}\circ\Toptmap{\mu_t}{\nu_t}\prt{x},
      x-\Toptmap{\mu_t}{\nu_t}\prt{x}
    }
    \d\mu_t\prt{x} \\
     & =
    \int_{\bR^d}
    \inner[\big]{
      v_{\mu_t}\prt{x}
      -
      v_{\nu_t}\circ\Toptmap{\mu_t}{\nu_t}\prt{x},
      x-\Toptmap{\mu_t}{\nu_t}\prt{x}
    }
    \d\mu_t\prt{x} \\
     & \quad+
    \int_{\bR^d}
    \inner[\big]{
      v_{\nu_t}\circ\Toptmap{\mu_t}{\nu_t}\prt{x}
      -
      \tilde{v}_{\nu_t}\circ\Toptmap{\mu_t}{\nu_t}\prt{x},
      x-\Toptmap{\mu_t}{\nu_t}\prt{x}
    }
    \d\mu_t\prt{x}.
  \end{align*}
  The first term is bounded by $-\lambda \Wtwometric[\mu_t][\nu_t][2]$ by the
  $\lambda$-monotonicity of the Wasserstein subdifferential
  recalled in \Cref{eq:geodesic-subdifferential-monotonicity}. The second term is
  bounded by
  $\Wtwometric[\mu_t][\nu_t]
    \norm{v_{\nu_t}-\tilde{v}_{\nu_t}}_{L^2_{\nu_t}}$
  by the Cauchy--Schwarz inequality. Since
  $v_{\nu_t}-\tilde{v}_{\nu_t}
    =\prt{\id-\proj{\bS_t^v}{L^2_{\nu_t}}}
    \prt{v_{\nu_t}}$, the pullback identity in \Cref{eq:projection-pullback} yields
  \begin{align*}
    \norm{v_{\nu_t}-\tilde{v}_{\nu_t}}_{L^2_{\nu_t}}
    =
    \norm{\prt{\id-\proj{\bS_t}{L^2_{\mu_0}}}
    \prt{v_{\nu_t}\circ\dec_t}}_{L^2_{\mu_0}}
    =
    \norm{r_t}_{L^2_{\mu_0}}.
  \end{align*}
  Therefore, combining the monotonicity estimate for the first term with the Cauchy--Schwarz bound on the residual term, we get
  \begin{align*}
    \frac{\d}{\dt}\frac12 \Wtwometric[\mu_t][\nu_t][2]
    \leq
    -\lambda \Wtwometric[\mu_t][\nu_t][2]
    +
    \norm{r_t}_{L^2_{\mu_0}} \Wtwometric[\mu_t][\nu_t].
  \end{align*}
  Since $\dec_0=\Id$, then $\Wtwometric[\mu_0][\nu_0]=0$, and the standard integral form of Gronwall's lemma gives the claim.
\end{proof}

\begin{proposition}[Energy dissipation of the projected curve]
  \label{prop:projected-energy-decay}
  Assume that the hypotheses of \Cref{thm:gronwall} hold and, in addition, that
  $v_{\nu_t}=-\subdifferentialelement\prt{\nu_t}\in L^2\prt{0,t_f;L^2_{\nu_t}}$.
  Then for $\mathcal L^1$-a.e.\ $t\in[0,t_f]$, the curve $\prt{\nu_t}_{t\in[0,t_f]}$ dissipates the
  energy according to
  \begin{align*}
    \frac{\d}{\dt}\cF\prt{\nu_t}
    =
    -\norm{\dot\dec_t}_{L^2_{\nu_0}}^2
    =
    -\norm{v_{\nu_t}}_{L^2_{\nu_t}}^2
    +
    \norm{r_t}_{L^2_{\nu_0}}^2
    \leq 0 .
  \end{align*}
\end{proposition}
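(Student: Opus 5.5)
The plan is to apply the chain rule for $\cF$ along the absolutely continuous curve $\prt{\nu_t}$ given by \Cref{proposition:proj_continuity_eq}, and then to use that $\dot\dec_t$ is an orthogonal projection. By \Cref{proposition:proj_continuity_eq}, $\prt{\nu_t}$ solves the continuity equation with velocity $\tilde v_{\nu_t}=\proj{\bS_t^v}{L^2_{\nu_t}}\prt{v_{\nu_t}}$, and $\tilde v_{\nu_t}\circ\dec_t=\dot\dec_t$. Here $\bS_t$ is either $\bW_m$ or $V_t$, depending on the regime in \Cref{eq:projected_dyn}. Since $\cF$ is $\lambda$-geodesically convex, its lower semicontinuous, proper structure gives the chain rule of \cite{AGS2008} (Proposition 10.3.18 / Theorem 10.3.?, the chain rule for $\lambda$-convex functionals along AC curves). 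The required integrability $\int_0^{t_f}\norm{\subdifferentialelement\prt{\nu_t}}_{L^2_{\nu_t}}\norm{\tilde v_{\nu_t}}_{L^2_{\nu_t}}\dt<\infty$ follows from the extra assumption $v_{\nu_t}\in L^2\prt{0,t_f;L^2_{\nu_t}}$ and $\norm{\tilde v_{\nu_t}}\le\norm{v_{\nu_t}}$. The chain rule then yields, for a.e.\ $t$,
\begin{align*}
\frac{\d}{\dt}\cF\prt{\nu_t}=\inner{\subdifferentialelement\prt{\nu_t},\tilde v_{\nu_t}}_{L^2_{\nu_t}}=-\inner{v_{\nu_t},\tilde v_{\nu_t}}_{L^2_{\nu_t}}.
\end{align*}
If the abstract chain rule is awkward to invoke, the alternative is to compute directly. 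Differentiate $\cF\prt{\pushfwd{\dec_t}{\mu_0}}$ using the first variation $\frac{\delta\cF}{\delta\mu}$ together with the weak continuity equation, testing with $\frac{\delta\cF}{\delta\mu}\prt{\nu_t}$; this needs the same regularity.

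Next, I pull back to $L^2_{\mu_0}$ via the unitary identity \Cref{eq:projection-pullback}:
\begin{align*}
\inner{v_{\nu_t},\tilde v_{\nu_t}}_{L^2_{\nu_t}}=\inner{v_{\nu_t}\circ\dec_t,\dot\dec_t}_{L^2_{\mu_0}}=\inner{\dot\dec_t+r_t,\dot\dec_t}_{L^2_{\mu_0}}=\norm{\dot\dec_t}^2_{L^2_{\mu_0}}.
\end{align*}
The last equality uses $r_t\perp\bS_t$, since $\dot\dec_t$ is the orthogonal projection of $v_{\nu_t}\circ\dec_t$ onto $\bS_t$. This gives the first equality of the statement, recalling $\nu_0=\mu_0$.

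The second equality is Pythagoras. Write $v_{\nu_t}\circ\dec_t=\dot\dec_t+r_t$ with orthogonal summands, and use $\norm{v_{\nu_t}\circ\dec_t}_{L^2_{\mu_0}}=\norm{v_{\nu_t}}_{L^2_{\nu_t}}$ by the pushforward formula. This gives $\norm{\dot\dec_t}^2=\norm{v_{\nu_t}}^2_{L^2_{\nu_t}}-\norm{r_t}^2$, and nonpositivity is immediate.

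The main obstacle is the first step: justifying the chain rule at $\nu_t$. Two points need care. First, $\nu_t$ must stay in $\dom\prt{\cF}$ with $\subdifferentialelement\prt{\nu_t}$ given by the gradient of the first variation; here the diffeomorphism assumption on $\dec_t$ ensures that $\nu_t$ has a density. Second, the subdifferential must pair correctly with a velocity that is not necessarily in the tangent cone. This is resolved by noting that only $\inner{\xi,\tilde v}$ is needed, and the chain rule in \cite{AGS2008} holds for any velocity field solving the continuity equation with the stated integrability.
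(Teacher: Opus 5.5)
Your proposal is correct and follows the paper's proof. The steps are the same: the chain rule along the continuity equation with velocity $\tilde v_{\nu_t}$ from \Cref{proposition:proj_continuity_eq}, then orthogonality of the projection combined with the unitary pullback \Cref{eq:projection-pullback}, then Pythagoras. You do the orthogonal splitting in $L^2_{\mu_0}$ rather than $L^2_{\nu_t}$, which is equivalent.

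One refinement to your justification of the pairing with a non-tangent velocity. The chain rule in \cite{AGS2008} is stated for the tangent velocity of the curve. It also holds with $\tilde v_{\nu_t}$ because $\subdifferentialelement\prt{\nu_t}=\nabla\frac{\delta\cF}{\delta\mu}\prt{\nu_t}$ lies in $\Tan^{\vel}_{\nu_t}\Ptwospace[\bR^d]$. Meanwhile, $\tilde v_{\nu_t}$ differs from the tangent velocity by a $\nu_t$-divergence-free field, which is $L^2_{\nu_t}$-orthogonal to that space. The paper leaves this point implicit as well.
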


\begin{proof}
  Since, by assumption, $v_{\nu_t}=-\subdifferentialelement\prt{\nu_t}\in L^2\prt{[0,t_f],L^2_{\nu_t}}$, and by \Cref{proposition:proj_continuity_eq}, $\nu_t$ satisfies the continuity equation with velocity $\tilde{v}_{\nu_t}$, the chain rule applies (see Theorem 10.3.18 in \cite{AGS2008}) so that, for a.e.\ $t\in[0,t_f]$, we have
  \begin{align*}
    \frac{\d}{\dt}\cF\prt{\nu_t}
     & =
    \int_{\bR^d}
    \inner{\subdifferentialelement\prt{\nu_t},\tilde{v}_{\nu_t}}
    \d\nu_t\prt{x} \\
     & =
    -
    \int_{\bR^d}
    \inner[\big]{
    v_{\nu_t},
    \proj{\bS_t^v}{L^2_{\nu_t}}\prt{v_{\nu_t}}
    }
    \d\nu_t\prt{x} \\
     & =
    -
    \norm{
    \proj{\bS_t^v}{L^2_{\nu_t}}\prt{v_{\nu_t}}
    }_{L^2_{\nu_t}}^2=-\norm{\dot\dec_t}_{L^2_{\nu_0}}^2,
  \end{align*}
  where the last two equalities follow from the orthogonality of the projection and from the pullback identity in \Cref{eq:projection-pullback}, respectively.
  Finally, by the orthogonal decomposition, one has
  \begin{align*}
    \norm{v_{\nu_t}}_{L^2_{\nu_t}}^2
    =
    \norm{\proj{\bS_t^v}{L^2_{\nu_t}}\prt{v_{\nu_t}}}_{L^2_{\nu_t}}^2
    +
    \norm{\prt{\id-\proj{\bS_t^v}{L^2_{\nu_t}}}v_{\nu_t}}_{L^2_{\nu_t}}^2,
  \end{align*}
  and applying again the pullback identity to the second term gives
  $\norm{\prt{\id-\proj{\bS_t^v}{L^2_{\nu_t}}}v_{\nu_t}}_{L^2_{\nu_t}}
    =
    \norm{r_t}_{L^2_{\nu_0}}$,
  which yields the final identity.
\end{proof}

\section{Taylor-based background space}
\label{sec:tb_background}

The choice of $\bW_m$ is flexible and can recover existing approximation strategies.
For instance, if $m=n$ and $\bW_m$ is a finite element
space spanned by standard basis functions, then the method is closely related to dynamical low-rank approximation.

Instead, here we propose a way to build $\bW_m$ by adaptively enriching the decoder with local information on the velocity field. Although the idea
is general, we present it in the setting of the Lagrangian system for energies of the form \Cref{eq:prototypical-energy}.
We therefore consider the Lagrangian velocity operator
\begin{align}\label{eq:lagrangian_velocity_operator}
  \cL:H^{k+2}_{\mu_0} & \to H^k_{\mu_0}, \qquad k\geq0,\nonumber                             \\
  \cT                 & \mapsto -\subdifferentialelement\prt{\pushfwd{\cT}{\mu_0}}\circ \cT,
\end{align}
where the two-derivative loss reflects the second-order character of $\cL$ when the
internal energy $U(\rho)$ is present, i.e.\ when the WGF contains diffusive
terms.
In particular, for $k=0$ we assume that $\cL$ is twice Fréchet
differentiable as a map $H^2_{\mu_0}\to L^2_{\mu_0}$.
We remark that for parabolic regularity for linear diffusion
$U(z)=z\log z$ and porous-medium diffusion $U(z)=z^m/(m-1)$, under suitable regularity assumptions on $V,W$ and $\rho_0$ one has
$(T_t)_{t\in[0,t_f]}\in C([0,t_f];H^2_{\mu_0})$.

\subsection{Construction}
To construct the Taylor-based background space for approximating the transport map with $H^2_{\mu_0}$ regularity, we first consider a generic starting time $t^*\geq 0$ and identify a \textit{starting map} $\cT^*$, represented by the Stiefel decoder as
\begin{align}\label{eq:starting_map}
  \cT^{*}=\sum_{i=1}^n c_i v_i,
  \qquad
  \stiefelframe{v}=(v_1,\dots,v_n)\in\St(n,H^2_{\mu_0};L^2_{\mu_0}),
\end{align}
a \textit{linearization map} $\bar\cT$, and a set of \textit{perturbation directions} $p_1,\dots,p_P$.
Note that the frame in \Cref{eq:starting_map} belongs to $H^2_{\mu_0}$, while orthonormality is
imposed with respect to the $L^2_{\mu_0}$ inner product, consistently with the
metric used for transport-map velocities when applying the dynamical approximation method (see \Cref{sec:stiefel}).
By taking the Lagrangian operator to be Fréchet differentiable, and assuming $\bar\cT,p_1,\dots,p_P\in H^4_{\mu_0}$, then by \Cref{eq:lagrangian_velocity_operator} with $k=2$ we get $\cL\prt{\bar\cT}\in H^2_{\mu_0}$ and $D\cL\prt{\bar\cT}[p_i]\in H^2_{\mu_0}$.
Furthermore, for $\cT\in H^2_{\mu_0}$ sufficiently close to $\bar\cT$, it holds that
\begin{equation}\label{eq:taylor_exp}
  \cL\prt{\cT}
  =
  \cL\prt{\bar \cT}
  +
  D\cL\prt{\bar \cT}[\cT-\bar \cT]
  +R_2(\cT),
  \qquad\text{with}\quad
  \norm{R_2(\cT)}_{L^2_{\mu_0}}
  =
  \mathrm{O}\prt[\big]{\norm{\cT-\bar\cT}_{H^2_{\mu_0}}^2}.
\end{equation}
The Taylor-based background space associated with
$(\cT^*,\bar\cT, \{p_i\}_{i=1}^P)$ is then defined as
\begin{equation}
  \bW_m
  \coloneqq
  \vspan\set{
    v_1,\dots,v_n,\,
    \cL\prt{\bar \cT},\,
    D\cL\prt{\bar \cT}[p_1],\dots,
    D\cL\prt{\bar \cT}[p_P]
  },
\end{equation}
where $m\coloneqq\dim(\bW_m)\le n+P+1$.
Such construction is motivated by the first-order expansion of the Lagrangian velocity around $\bar\cT$ of \Cref{eq:taylor_exp}.
Indeed, if the displacement $\cT-\bar\cT$ is well described by the selected directions $p_i$ and $\cT$ is sufficiently close to $\bar{\cT}$ in $H^2_{\mu_0}$, the linearized velocity $\cL\prt{\bar\cT}+D\cL\prt{\bar\cT}[\cT-\bar\cT]$ is well represented by the span of $\cL\prt{\bar\cT}$ and $D\cL\prt{\bar\cT}[p_i]$.
Moreover, by construction, $\bW_m\subset H^2_{\mu_0}$.

Intuitively, in order for the Taylor-based background space to be effective, the linearization map should remain close to the transport maps in the subsequent time interval, so that $\bW_m$ ensures a controlled error.
This motivates updating the Taylor-based background space at prescribed times.
More precisely, let $\bar{\mathbf t}=\{\bar t_j\}_{j=1}^K$ be a sequence of times realizing a partition of $[0,t_f]$, and let $\bar{\bm{\varepsilon}}=\{\bar{\varepsilon}_j\}_{j=1}^{K-1}$ be a sequence of tolerances.
We say that $(\dec_t)_{t\in[0,t_f]}$ solves \Cref{eq:projected_dyn} with \textit{linearization times} $\bar{\mathbf t}$ and \textit{linearization tolerances} $\bar{\bm{\varepsilon}}$ if the corresponding background space is updated at each $\bar{t}_j\in\bar{\mathbf t}$ by selecting the current map as the starting map,
i.e.\ $\cT^{*}_{\bar{t}_j}=\dec_{\bar{t}_j}$, and an approximation of the current map up to the tolerance $\bar{\varepsilon}_j$ as linearization map $\bar\dec_{\bar t_j}$, while the perturbation directions are kept constant.
In this way, for every $1\le j\le K-1$ and every $t\in[\bar t_j,\bar t_{j+1}]$, the background space is defined as follows:
\begin{align}
  \label{eq:taylor-bk-space}
  \bW_{m,\bar t_j}=\vspan\set{
  v_{\bar{t}_j,1},\dots,v_{\bar{t}_j,n},\,
  \cL\prt{\bar \dec_{\bar{t}_j}},\,
  D\cL\prt{\bar \dec_{\bar{t}_j}}[p_1],\dots,
  D\cL\prt{\bar \dec_{\bar{t}_j}}[p_P]
  }.
\end{align}
To lighten the notation, in the following we will write $\bW_m$ in place of $\bW_{m,\bar t}$ whenever the corresponding linearization time is clear from the context.

\subsection{Error analysis}
In this subsection, we specialize the previous error analysis (see \Cref{thm:gronwall}) to the Taylor-based background space from \cref{eq:taylor-bk-space}.
We restrict to the non-degenerate regime in which $\bc_t\neq 0\ \forall\ t\in[0,t_f]$, as we aim to certify approximations that remain sufficiently close to a non-degenerate exact Lagrangian trajectory.
Accordingly, we exclude the case $\bc_t=0$ throughout.

First, observe that the Taylor-based construction of the background spaces for the linearization times $\bar{\mathbf t}$ and tolerances $\bar{\bm{\varepsilon}}$ implies, for every $1\le j\le K-1$ and $t\in[\bar t_j,\bar t_{j+1}]$, that
\begin{align}\label{eq:relinearization_loss}
  \norm{\dot{\dec}_t-\cL^{\mathrm{app}}_{\bar t_j,\bar{\varepsilon}_j}\prt{\dec_t}}_{L^2_{\mu_0}}=0,
\end{align}
where
\begin{align*}
  \cL^{\mathrm{app}}_{\bar t_j,\bar{\varepsilon}_j}\prt{\dec_t}
  \coloneqq
  \proj{\bW_{m,\bar t_j}}{L^2_{\mu_0}}
  \prt[\Big]{
    \cL\prt{\bar\dec_{\bar t_j}}
    +
    D\cL\prt{\bar\dec_{\bar t_j}}[\dec_t-\bar\dec_{\bar t_j}]
  },
  \qquad\text{with}\quad
  \norm{\dec_{\bar t_j}-\bar\dec_{\bar t_j}}_{H^2_{\mu_0}}<\bar{\varepsilon}_j.
\end{align*}
Then, the residual appearing in \Cref{thm:gronwall} can be decomposed for each $t\in[\bar t_j,\bar t_{j+1}]$ as follows:
\begin{align*}
  \norm{r_t}_{L^2_{\mu_0}}
   & = \norm[\Big]{\cL\prt{\dec_t}-\dot{\dec}_t}_{L^2_{\mu_0}}\leq \norm[\Big]{\cL^{\mathrm{app}}_{\bar{t}_j,\bar{\varepsilon}_j}\prt{\dec_t}-\dot{\dec}_t}_{L^2_{\mu_0}}
  + \norm[\Big]{\cL\prt{\dec_t}-\cL^{\mathrm{app}}_{\bar{t}_j,\bar{\varepsilon}_j}\prt{\dec_t}}_{L^2_{\mu_0}}.
\end{align*}
The first term vanishes by construction by \Cref{eq:relinearization_loss}.
The second one measures the local Taylor truncation error and, by assuming sufficient regularity, gives the following bound:
\begin{align}\label{eq:residual}
  \norm{r_t}_{L^2_{\mu_0}} & \leq \norm[\Big]{(\id-\proj{\bW_{m}}{L^2_{\mu_0}})\prt[\Big]{D\cL\prt{\bar{\dec}_{\bar{t}_j}}[\dec_t-\bar{\dec}_{\bar{t}_j}]}}_{L^2_{\mu_0}}
  + C_{\bar{t}_j,2}\norm{\dec_t-\bar{\dec}_{\bar{t}_j}}^{2}_{H^2_{\mu_0}}  \nonumber                                                                                      \\
                           & \leq C_{\bar t_j,1}\,\norm{e_{\bm{p},\bar t_j,t}}_{H^2_{\mu_0}}+ C_{\bar{t}_j,2}\norm{\dec_t-\bar{\dec}_{\bar{t}_j}}^{2}_{H^2_{\mu_0}},
\end{align}
where $\bP\coloneqq\Span\{p_1,\dots,p_P\}$,
$e_{\bm{p},\bar t_j,t} \coloneqq \prt{\id-\proj{\bP}{H^2_{\mu_0}}}\prt{\dec_t-\bar{\dec}_{\bar{t}_j}}$ is the component of the displacement not captured by the local Taylor basis, and $C_{\bar{t}_j,i}\coloneqq \frac{1}{i!}\norm{D^i\cL\prt{\bar{\dec}_{\bar{t}_j}}}_{\cL^i(H^2_{\mu_0};L^2_{\mu_0})}$, with $\cL^i\prt{H^2_{\mu_0};L^2_{\mu_0}}$ denoting the space of continuous $i$-linear maps from $\prt{H^2_{\mu_0}}^{\otimes i}$ to $L^2_{\mu_0}$.
Moreover, the second term on the right-hand side accounts for the bound of the Taylor residual $R_2(\dec_t)$.
Throughout the rest of this subsection we assume the following hypotheses:
\begin{enumerate}
  \item[(H1)] $\cF$ is of the form in \Cref{eq:prototypical-energy}.
  \item[(H2)] $\cF$ is a $\lambda$-convex functional along generalized geodesics on $\Wtwospace[\bR^d]$, with $\lambda>0$.
  \item[(H3)] The restriction of
    $\cL$ on the affine space $\bW_{m,\bar t_j}+\bar{\dec}_{\bar{t}_j}$ is $C^2$ for each $\bar{t}_j\in\bar{\mathbf t}$.
    Moreover, the constants $C_{\bar t_j,1}$ and $C_{\bar t_j,2}$ are uniformly bounded, with
    $C_{\bar t_j,2}>0$.
\end{enumerate}
Under these assumptions, first, \Cref{lemma:local-residual-control} shows that, on each subinterval, the residual contribution in \Cref{eq:residual}
can be made arbitrarily small by taking both the interval and the linearization tolerance small enough.
Then, \Cref{thm:final-residual} propagates this local control along the whole
partition $[0,t_f]$, showing that the Wasserstein error at the final time can be made arbitrarily small by acting on $\bar{\mathbf t}$ and $\bar{\bm{\varepsilon}}$.
Finally, \Cref{cor:AC} records the absolute-continuity property of the approximate map, and \Cref{cor:subdiff-integrability} proves the integrability of the subdifferential at $\partial^\circ\cF(\nu_t)$ required by \Cref{prop:projected-energy-decay}.


\begin{lemma}[Local residual control]\label{lemma:local-residual-control}
  Fix $\bar t\ge0$. Then, under the assumptions $(H1)$--$(H3)$, for every $\eta>0$, there exist
  $t_+>\bar t$ and $\bar{\varepsilon}_{\bar t}>0$ such that, whenever
  $\bW_m$ is the Taylor-based background space built at $\bar t$ from the
  starting map $\cT^*_{\bar t}=\dec_{\bar t}$ and a linearization map
  $\bar\dec_{\bar t}\in H^4_{\mu_0}$ satisfying
  $\norm{\dec_{\bar t}-\bar\dec_{\bar t}}_{H^2_{\mu_0}}
    \le \bar{\varepsilon}_{\bar t}$, the corresponding trajectory $\dec_t$
  solving \Cref{eq:projected_dyn} on $[\bar t,t_+]$ satisfies
  \begin{align*}
    \int_{\bar t}^{t_+}
    \norm{r_s}_{L^2_{\mu_0}}
    e^{-\lambda\prt{t_+-s}}
    \ds
    \le \eta .
  \end{align*}
\end{lemma}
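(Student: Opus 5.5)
We will start from the residual bound \Cref{eq:residual}, valid for $s\in[\bar t,t_+]$, and control the displacement $\dec_s-\bar\dec_{\bar t}$ in $H^2_{\mu_0}$ by splitting it as
\begin{align*}
  \dec_s-\bar\dec_{\bar t}=\prt{\dec_s-\dec_{\bar t}}+\prt{\dec_{\bar t}-\bar\dec_{\bar t}}.
\end{align*}
Using that the orthogonal projection $\id-\proj{\bP}{H^2_{\mu_0}}$ has norm at most one, both terms in \Cref{eq:residual} are then bounded by functions of $\delta_s\coloneqq\norm{\dec_s-\dec_{\bar t}}_{H^2_{\mu_0}}$ and $\bar\varepsilon_{\bar t}$:
\begin{align*}
  \norm{r_s}_{L^2_{\mu_0}}\le C_1\prt{\delta_s+\bar\varepsilon_{\bar t}}+2C_2\prt{\delta_s^2+\bar\varepsilon_{\bar t}^2},
\end{align*}
where $C_1,C_2$ are the uniform bounds from (H3). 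We may use these crude bounds because the lemma only asks for smallness, not for sharp rates. Since $\lambda>0$ and $s\le t_+$, the weight satisfies $e^{-\lambda(t_+-s)}\le 1$. Hence it will suffice to show that
\begin{align*}
  \int_{\bar t}^{t_+}\norm{r_s}_{L^2_{\mu_0}}\ds\le (t_+-\bar t)\sup_{s\in[\bar t,t_+]}\norm{r_s}_{L^2_{\mu_0}}
\end{align*}
is at most $\eta$.

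The key step is an a priori bound on $\delta_s$, or at least on $\sup_s\norm{r_s}$, on a short interval. On $[\bar t,t_+]$ the trajectory lies in the finite-dimensional affine set $\dec_{\bar t}+\bW_m$, because $\dot\dec_s\in\bW_m$ by \Cref{eq:projected_dyn} in the regime $\bc_s\neq0$. On $\bW_m$ all norms are equivalent, so $\delta_s\le K_m\norm{\dec_s-\dec_{\bar t}}_{L^2_{\mu_0}}$. By (H3), $\cL$ restricted to this affine space is $C^2$, hence locally Lipschitz. The projected ODE $\dot\dec_s=\proj{\bW_m}{L^2_{\mu_0}}\cL(\dec_s)$ is therefore a locally Lipschitz ODE in finite dimensions, and on a ball of radius $1$ around $\dec_{\bar t}$ its right-hand side is bounded by some $M$. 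For $t_+-\bar t\le 1/M$ the solution stays in the ball, and $\norm{\dec_s-\dec_{\bar t}}_{L^2_{\mu_0}}\le M(s-\bar t)$. This yields $\delta_s\le K_mM(s-\bar t)$ and in particular $\delta_s\le K_m$, so $\sup_s\norm{r_s}$ is bounded by a constant $R$ that is independent of $\bar\varepsilon_{\bar t}\le 1$.

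We then conclude in two steps. First we fix $\bar\varepsilon_{\bar t}\le1$, for instance small so that the $\bar\varepsilon$-terms are harmless. Then we choose $t_+-\bar t\le\min\{1/M,\eta/R\}$, which gives the claim. The order of choices matters, since $\bW_m$, and hence $K_m$ and $M$, depend on $\bar\dec_{\bar t}$. To avoid circularity we will take $M$ and $K_m$ uniform over linearization maps in the $\bar\varepsilon$-ball, appealing again to (H3). Alternatively, since the lemma only requires existence, we can choose $t_+$ after $\bar\dec_{\bar t}$ is fixed, reading the quantifiers generously.

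The main obstacle is this uniformity. The norm-equivalence constant $K_m$ between $L^2_{\mu_0}$ and $H^2_{\mu_0}$ on $\bW_m$, and the bound $M$, both depend on the background space, and therefore on $\bar\dec_{\bar t}$. To handle this we would first try to bound $\norm{\dot\dec_s}_{H^2_{\mu_0}}$ directly. The idea is to use $\norm{\proj{\bW_m}{L^2_{\mu_0}}\cL(\dec_s)}$ together with continuity of $\cL:H^4\to H^2$, or the uniform constants in (H3), to get a bound independent of the particular linearization map.
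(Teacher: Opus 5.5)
Your argument is correct, to the same standard of rigor as the paper, but it follows a genuinely different and more elementary route.

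For the $L^2_{\mu_0}$ part of the quadratic Taylor term, the paper goes through the exact Lagrangian flow $T_t$. It bounds $\norm{T_t-T_{\bar t}}_{L^2_{\mu_0}}$ by the metric-derivative decay \Cref{eq:metric-derivative-decay}. It bounds $\norm{\dec_t-T_t}_{L^2_{\mu_0}}$ by a Gr\"onwall argument resting on the monotonicity \Cref{eq:wasserstein-subdifferential-monotonicity}, which comes from (H2). The remaining terms, collected in $\tilde C$, are made small through the $H^2_{\mu_0}$-continuity of $\dec_t$ at $\bar t$ (\Cref{lemma:finite-dimensional-AC}). Finally it closes a quadratic inequality $\Psi\le K\Psi^2+R\Psi+L$ for $\Psi_{\bar t,t_+}$ by bootstrapping from $\Psi_{\bar t,\bar t}=0$.

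You instead control the whole $H^2_{\mu_0}$ displacement by an a priori bound for the finite-dimensional ODE plus norm equivalence on $\bW_m$. You then conclude from ``integral $\le$ length $\times$ sup''. This uses neither (H2) nor the exact solution, and it can be shortened further. Since $r_s=(\id-\proj{\bW_m}{L^2_{\mu_0}})\cL(\dec_s)$, you have $\norm{r_s}_{L^2_{\mu_0}}\le\norm{\cL(\dec_s)}_{L^2_{\mu_0}}\le M$ on your ball. So you do not even need \Cref{eq:residual}, whose Taylor remainder is only justified for maps close to $\bar\dec_{\bar t}$ in $H^2_{\mu_0}$; your radius-one $L^2_{\mu_0}$ ball does not guarantee that.

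What the paper's heavier route buys is the explicit bound \Cref{eq:Psi_bound}. Its coefficients show how the estimate depends on $\bar\varepsilon_{\bar t}$, $\tilde C^*$ and the accumulated error $\norm{T_{\bar t}-\dec_{\bar t}}_{L^2_{\mu_0}}$. This is what \Cref{thm:final-residual} reuses to run the recursion for $S_j$. A bare $\Psi_{\bar t,t_+}\le R\,(t_+-\bar t)$ with $R$ merely finite would not close that recursion.

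On the uniformity you flag, your first fix does not follow from (H3). That hypothesis only bounds $C_{\bar t,1}$ and $C_{\bar t,2}$, i.e.\ derivatives of $\cL$ at $\bar\dec_{\bar t}$. It says nothing about the $H^2_{\mu_0}$/$L^2_{\mu_0}$ equivalence constant $K_m$ on $\bW_m$, nor about $\sup\norm{\cL}$ on balls of $\bW_m$. Moreover, the $H^2_{\mu_0}$-tolerance on $\bar\dec_{\bar t}$ does not control $\cL(\bar\dec_{\bar t})$ or $D\cL(\bar\dec_{\bar t})[p_i]$ in $H^2_{\mu_0}$.

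So you must use your alternative and choose $t_+$ after the linearization map is fixed. This is not a weakness relative to the paper, whose proof makes exactly the same choice. It shrinks $t_+-\bar t$ using the $H^2_{\mu_0}$-continuity at $\bar t$ of the particular trajectory, which depends on $\bW_m$ and hence on $\bar\dec_{\bar t}$.
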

\begin{proof}
  Fix $t\in[\bar t,t_+]$.
  By taking the exact Lagrangian solution $(T_t)_{t\geq 0}$, we get
  \begin{align}\label{eq:decomposition_T}
    \norm{\dec_t-\bar\dec_{\bar t}}_{L^2_{\mu_0}}\leq \norm{\dec_t-T_t}_{L^2_{\mu_0}}+ \norm{T_t-T_{\bar t}}_{L^2_{\mu_0}}+\norm{T_{\bar t}-\bar\dec_{\bar t}}_{L^2_{\mu_0}}.
  \end{align}
  We start bounding the second term by noting
  \begin{align*}
    \frac{1}{2}\frac{\d}{\dt}\norm{T_t-T_{\bar t}}_{L^2_{\mu_0}}^2=\inner{T_t-T_{\bar t},v_{\mu_t}\circ T_t}_{L^2_{\mu_0}}\leq \norm{T_t-T_{\bar t}}_{L^2_{\mu_0}}\norm{v_{\mu_t}}_{L^2_{\mu_t}},
  \end{align*}
  so that,
  using \Cref{eq:metric-derivative-identity,eq:metric-derivative-decay}, we obtain:
  \begin{align*}
    \norm{T_t-T_{\bar t}}_{L^2_{\mu_0}}\leq \int_{\bar t}^t \norm{v_{\mu_s}}_{L^2_{\mu_s}}\ds\leq\int_{\bar t}^t\abs{\mu'}\prt{s}\ds\leq\abs{\mu'}\prt{\bar t}\int_{\bar t}^te^{-\lambda \prt{s-\bar t}}\ds.
  \end{align*}
  Moreover, since $v_{\nu_t}\circ\dec_t=\tilde{v}_{\nu_t}\circ\dec_t+r_t$, we get
  \begin{align*}
    \frac{1}{2}\frac{\d}{\dt}\norm{\dec_t-T_t}_{L^2_{\mu_0}}^2
     & = \inner[\big]{\dec_t-T_t,\,
    v_{\nu_t}\circ\dec_t-v_{\mu_t}\circ T_t - r_t}_{L^2_{\mu_0}} \\
     & \le -\lambda\norm{\dec_t-T_t}_{L^2_{\mu_0}}^2
    + \norm{r_t}_{L^2_{\mu_0}}\norm{\dec_t-T_t}_{L^2_{\mu_0}},
  \end{align*}
  by Cauchy--Schwarz and
  by the monotonicity estimate induced by $\lambda$-convexity along
  generalized geodesics, recalled in
  \Cref{eq:wasserstein-subdifferential-monotonicity} and applied with base
  measure $\mu_0$ and maps $T_t$ and $\dec_t$, so that
  \begin{align*}
    \inner[\big]{\dec_t-T_t,\,
    v_{\nu_t}\circ\dec_t-v_{\mu_t}\circ T_t}_{L^2_{\mu_0}}
    \le -\lambda\norm{\dec_t-T_t}_{L^2_{\mu_0}}^2.
  \end{align*}
  By Grönwall's inequality,
  \begin{align}\label{eq:bound_on_T}
    \norm{\dec_t-T_t}_{L^2_{\mu_0}}
    \le
    \norm{\dec_{\bar t}-T_{\bar t}}_{L^2_{\mu_0}} e^{-\lambda\prt{t-\bar t}}
    +
    \int_{\bar t}^t \norm{r_s}_{L^2_{\mu_0}} e^{-\lambda\prt{t-s}}\,ds,
  \end{align}
  and plugging everything into \Cref{eq:decomposition_T} yields
  \begin{align*}
    \norm{\dec_t-\bar\dec_{\bar t}}_{L^2_{\mu_0}}
     & \leq \norm{T_{\bar t}-\dec_{\bar t}}_{L^2_{\mu_0}}\prt[\big]{1+e^{-\lambda\prt{t-\bar t}}}
    +\norm{\dec_{\bar t}-\bar\dec_{\bar t}}_{L^2_{\mu_0}}                                         \\
     & \quad
    +\int_{\bar t}^t\norm{r_s}_{L^2_{\mu_0}}e^{-\lambda\prt{t-s}}\ds
    +\abs{\mu'}\prt{\bar t}\int_{\bar t}^te^{-\lambda \prt{s-\bar t}}\ds.
  \end{align*}
  Introducing the quantity $\Phi_{\bar t,t}\coloneqq \int_{\bar t}^t\norm{r_s}_{L^2_{\mu_0}}e^{-\lambda\prt{t-s}}\ds$, by \Cref{eq:residual} we get
  \begin{align*}
    \norm{r_t}_{L^2_{\mu_0}} & \leq \tilde{C}_{\bar{\varepsilon}_{\bar t},\bar t,t}+C_{\bar t,2}\prt[\Big]{A_{\bar{\varepsilon}_{\bar t},\bar t,t}+\Phi_{\bar t,t}}^2,
  \end{align*}
  with
  \begin{align}
    A_{\bar{\varepsilon}_{\bar t},\bar t,t}         & \coloneqq \norm{T_{\bar t}-\dec_{\bar t}}_{L^2_{\mu_0}}\prt[\big]{1+e^{-\lambda\prt{t-\bar t}}}+\norm{\dec_{\bar t}-\bar\dec_{\bar t}}_{L^2_{\mu_0}}+\abs{\mu'}\prt{\bar t}\int_{\bar t}^te^{-\lambda \prt{s-\bar t}}\ds,\label{def:A} \\
    \tilde{C}_{\bar{\varepsilon}_{\bar t},\bar t,t} & \coloneqq C_{\bar t,1}\norm{e_{\bm{p},\bar t,t}}_{H^2_{\mu_0}}+C_{\bar t,2}\prt[\big]{\norm{\nabla\dec_t-\nabla\bar\dec_{\bar t}}^2_{L^2_{\mu_0}}+\norm{\nabla^2\dec_t-\nabla^2\bar\dec_{\bar t}}^2_{L^2_{\mu_0}}},\label{def:C_tilde}
  \end{align}
  so that
  \begin{align*}
    \Phi_{\bar t,t} & \leq \int_{\bar t}^t\prt[\Big]{\tilde{C}_{\bar{\varepsilon}_{\bar t},\bar t,s}+C_{\bar t,2}\prt[\big]{A_{\bar{\varepsilon}_{\bar t},\bar t,s}+\Phi_{\bar t,s}}^2}e^{-\lambda\prt{t-s}}\ds.
  \end{align*}
  Then, by considering $\Psi_{\bar t,t_+}\coloneqq\sup_{t\in[\bar t,t_+]}\Phi_{\bar t,t}$, we obtain
  \begin{align}\label{eq:Psi_ineq}
    \Psi_{\bar t,t_+} & \leq \sup_{t\in[\bar t,t_+]} \prt[\Big]{\tilde{C}_{\bar{\varepsilon}_{\bar t},\bar t,t}+C_{\bar t,2}\prt[\big]{A_{\bar{\varepsilon}_{\bar t},\bar t,t}+\Phi_{\bar t,t}}^2}\int_{\bar t}^{t_+}e^{-\lambda\prt{t_+-s}}\ds\nonumber                                                             \\
                      & \leq \prt[\Big]{C_{\bar t,2}\Psi_{\bar t,t_+}^2+2C_{\bar t,2}A^{*}_{\bar{\varepsilon}_{\bar t},\bar t,t_+}\Psi_{\bar t,t_+}+\tilde{C}^{*}_{\bar{\varepsilon}_{\bar t},\bar t,t_+}+C_{\bar t,2}A^{*^2}_{\bar{\varepsilon}_{\bar t},\bar t,t_+}}\int_{\bar t}^{t_+}e^{-\lambda\prt{t_+-s}}\ds,
  \end{align}
  with $A^{*}_{\bar{\varepsilon}_{\bar t},\bar t,t_+}\coloneqq\sup_{t\in[\bar t,t_+]}A_{\bar{\varepsilon}_{\bar t},\bar t,t}$, and $\tilde{C}^{*}_{\bar{\varepsilon}_{\bar t},\bar t,t_+}\coloneqq\sup_{t\in[\bar t,t_+]}\tilde{C}_{\bar{\varepsilon}_{\bar t},\bar t,t}$.
  Applying \Cref{lemma:finite-dimensional-AC}, the projected trajectory satisfies
  $\dec\in AC([\bar t,t_+];H^2_{\mu_0})$.
  In particular, $t\mapsto\dec_t$ is continuous in $H^2_{\mu_0}$ at $\bar t$.
  Hence, since the map
  $t\mapsto\tilde{C}_{\bar{\varepsilon}_{\bar t},\bar t,t}$
  is continuous, for every $\epsilon>0$ one can choose $t_+-\bar t$ small enough so that
  \begin{align*}
    \tilde{C}^{*}_{\bar{\varepsilon}_{\bar t},\bar t,t_+}
     & \leq
    C_{\bar t,1}
    \norm{\prt{\id-\proj{\bP}{H^2_{\mu_0}}}\prt{\dec_{\bar t}-\bar\dec_{\bar t}}}_{H^2_{\mu_0}} +
    C_{\bar t,2}
    \prt[\Big]{
    \norm{\nabla\dec_{\bar t}-\nabla\bar\dec_{\bar t}}^2_{L^2_{\mu_0}}
    +
    \norm{\nabla^2\dec_{\bar t}-\nabla^2\bar\dec_{\bar t}}^2_{L^2_{\mu_0}}
    }
    +\epsilon \\
     & \leq
    C_{\bar t,1}\bar{\varepsilon}_{\bar t}
    +
    C_{\bar t,2}\bar{\varepsilon}_{\bar t}^2
    +\epsilon .
  \end{align*}
  By grouping the terms in \Cref{eq:Psi_ineq}, we identify the coefficients $K_{\bar t,t_+},L_{\bar{\varepsilon}_{\bar t},\bar t,t_+},R_{\bar t,t_+}$ realizing
  \begin{align*}
    \Psi_{\bar t,t_+} \leq K_{\bar t,t_+}\Psi_{\bar t,t_+}^2+R_{\bar t,t_+}\Psi_{\bar t,t_+}+L_{\bar{\varepsilon}_{\bar t},\bar t,t_+}.
  \end{align*}
  From the previous estimate, we have
  $\tilde C^*_{\bar{\varepsilon}_{\bar t},\bar t,t_+}\to0$
  as $\bar{\varepsilon}_{\bar t}\to0$ and $t_+-\bar t\to0$.
  Moreover, $A^*_{\bar{\varepsilon}_{\bar t},\bar t,t_+}$ remains bounded for
  $\bar{\varepsilon}_{\bar t}$ and $t_+-\bar t$ small enough.
  Consequently $K_{\bar t,t_+}$, $R_{\bar t,t_+}$ and
  $L_{\bar{\varepsilon}_{\bar t},\bar t,t_+}$ can be made small enough so that
  $R_{\bar t,t_+}<1$ and
  $4K_{\bar t,t_+}L_{\bar{\varepsilon}_{\bar t},\bar t,t_+}<
    (1-R_{\bar t,t_+})^2$.

  If $4 K_{\bar t,t_+} L_{\bar{\varepsilon}_{\bar t},\bar t,t_+} < (1-R_{\bar t,t_+})^2$ and $R_{\bar t,t_+}<1$, then a bootstrapping argument yields
  \begin{align}\label{eq:Psi_bound}
    0\leq\Psi_{\bar t,t_+}<\frac{(1-R_{\bar t,t_+})-\sqrt{(1-R_{\bar t,t_+})^2-4K_{\bar t,t_+}L_{\bar{\varepsilon}_{\bar t},\bar t,t_+}}}{2K_{\bar t,t_+}},
  \end{align}
  since $\Psi_{\bar t,\bar t}=0$.
  By choosing $t_+-\bar t$ and $\bar{\varepsilon}_{\bar t}$ small enough, the
  right-hand side can be made smaller than $\eta$, so that
  \begin{align*}
    \int_{\bar t}^{t_+}
    \norm{r_s}_{L^2_{\mu_0}}
    e^{-\lambda\prt{t_+-s}}
    \ds
    \le
    \Psi_{\bar t,t_+}
    \le
    \eta.
  \end{align*}
\end{proof}

\begin{theorem}[Final-time accuracy of the Taylor scheme]\label{thm:final-residual}
  Under the assumptions $(H1)$--$(H3)$, for every final time $t_f>0$ and
  every accuracy $\eta>0$, there exist a finite partition
  $\bar{\mathbf t}=\{0=\bar t_1<\cdots<\bar t_K=t_f\}$ of $[0,t_f]$
  and tolerances
  $\bar{\bm{\varepsilon}}=\{\bar{\varepsilon}_j\}_{j=1}^{K-1}$
  such that, whenever the
  background space $\bW_{m,\bar t_j}$ is built on each interval $[\bar t_j,\bar t_{j+1}]$, the corresponding projected trajectory
  $\dec_t$ solving \Cref{eq:projected_dyn} with initial datum $\dec_0=\Id$
  satisfies, for $\nu_t\coloneqq\pushfwd{\dec_t}{\mu_0}$,
  \begin{align*}
    \Wtwometric[\mu_{t_f}][\nu_{t_f}]\le\eta .
  \end{align*}
\end{theorem}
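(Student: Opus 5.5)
Starting from \Cref{thm:gronwall}, the final-time error satisfies
\begin{align*}
  \Wtwometric[\mu_{t_f}][\nu_{t_f}]
  \le \int_0^{t_f} e^{-\lambda(t_f-s)}\norm{r_s}_{L^2_{\mu_0}}\ds
  = \sum_{j=1}^{K-1} e^{-\lambda(t_f-\bar t_{j+1})}\int_{\bar t_j}^{\bar t_{j+1}} e^{-\lambda(\bar t_{j+1}-s)}\norm{r_s}_{L^2_{\mu_0}}\ds .
\end{align*}
Since $\lambda>0$ by (H2), each prefactor is at most one. It therefore suffices to build a partition and tolerances for which the $j$-th local integral is bounded by $\eta_j$, with $\sum_j\eta_j\le\eta$.

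The plan is to build the partition greedily, applying \Cref{lemma:local-residual-control} repeatedly. Set $\bar t_1=0$. Given $\bar t_j$ and the already computed map $\dec_{\bar t_j}$, apply the lemma with base time $\bar t_j$ and budget $\eta_j$. This yields a step $t_+$ and a tolerance $\bar\varepsilon_j$. Set $\bar t_{j+1}=\min(t_+,t_f)$; shrinking the interval keeps the local bound, because the integrand is nonnegative and $e^{-\lambda(t_+-s)}\ge e^{-\lambda(t_+ - s)}$ after truncation up to factors $\le e^{\lambda(t_+-\bar t_{j+1})}$, which are harmless for short steps. Finally, choose $\bar\dec_{\bar t_j}\in H^4_{\mu_0}$ within $\bar\varepsilon_j$ of $\dec_{\bar t_j}$ in $H^2_{\mu_0}$, which is possible by density of $H^4$ in $H^2$.

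The main obstacle is to make the partition finite and to make the budgets sum to $\eta$. The lemma gives $t_+$ depending on $\bar t$ and on the current state, so a naive greedy construction could produce steps that accumulate before $t_f$. I would handle this with uniformity from (H3): the constants $C_{\bar t_j,1}$ and $C_{\bar t_j,2}$ are uniformly bounded. The quantities entering $A^*$ in the proof of the lemma are also uniformly controlled: $\abs{\mu'}(\bar t)\le\abs{\mu'}(0)$ by metric-derivative decay, and $\norm{T_{\bar t}-\dec_{\bar t}}_{L^2_{\mu_0}}$ stays bounded by the Gronwall bound \Cref{eq:bound_on_T} as long as the accumulated residual stays below $\eta$. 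The bootstrap conditions $R<1$ and $4KL<(1-R)^2$ would then hold with a step length $h>0$ and a tolerance independent of $j$, provided the per-interval budget is $\eta/K$ with $K=\lceil t_f/h\rceil$. The subtlety is that $h$ depends on the budget, and the budget depends on $K$. I would break this circularity by noting that the $\eta$-dependence of the lemma enters only through $L$, which is driven to zero by $\bar\varepsilon$ and by the continuity term $\epsilon$, while $K$ and $R$ depend only on the step length.

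Concretely, fix $h$ once so that $R<1/2$ uniformly, which fixes $K$. Then, on each interval, choose $\bar\varepsilon_j$ and the continuity slack small enough that the root in \Cref{eq:Psi_bound} is below $\eta/K$. The continuity slack may need further subdivision of an interval, but any such subdivision only reduces $R$. The induction hypothesis (accumulated error $\le\eta$) is then propagated from interval to interval, which closes the argument.
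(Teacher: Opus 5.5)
Your route differs from the paper's: you bound the exponential weights by one and split an error budget $\sum_j\eta_j\le\eta$ over a uniform partition. That route can be made to work, but the step you use to break the circularity is false as stated. Grouping \Cref{eq:Psi_ineq} gives $K=C_{\bar t,2}I$, $R=2C_{\bar t,2}A^*I$ and $L=(\tilde C^*+C_{\bar t,2}(A^*)^2)I$, with $I=\int_{\bar t}^{t_+}e^{-\lambda(t_+-s)}\ds\approx h$. So $L$ is \emph{not} driven to zero by $\bar\varepsilon_j$ and the continuity slack alone. Evaluating \Cref{def:A} at $t=t_+$ gives $A^*\ge E_j+\abs{\mu'}(\bar t_j)(1-e^{-\lambda h})/\lambda$, where $E_j\coloneqq\norm{T_{\bar t_j}-\dec_{\bar t_j}}_{L^2_{\mu_0}}$, and neither term depends on $\bar\varepsilon_j$ or $\epsilon$. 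Since the root in \Cref{eq:Psi_bound} is at least $L$, the best bound you can extract on an interval is of order $C_{\bar t_j,2}(E_j+\abs{\mu'}(\bar t_j)h)^2h$. Suppose $h$ is frozen beforehand only through $R<1/2$. Then already on the first interval this exceeds your budget $\eta/K\approx\eta h/t_f$ once $\eta\lesssim C_{\bar t_1,2}\abs{\mu'}(0)^2h^2t_f$. On later intervals the accumulated error $E_j$, which is up to $\eta$ under your induction hypothesis, feeds back quadratically. So the recipe ``fix $h$, then shrink $\bar\varepsilon_j$ and $\epsilon$'' cannot deliver the per-interval bound. Similarly, $R$ is not a function of the step length only: it carries $A^*$, and it is the uniform bound on $A^*$ that gives $R<1/2$.

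The circularity is only apparent, because both $L$ and the budget scale linearly with the interval length. Allocate $\eta_j=\eta(\bar t_{j+1}-\bar t_j)/t_f$, which also makes any later subdivision harmless. Then impose the \emph{rate} condition $\tilde C^*+C_{\bar t_j,2}(A^*)^2\le c\,\eta/t_f$ for a small absolute constant $c$. Use $A^*\le 2E_j+C_{\mathrm{emb}}\bar\varepsilon_j+\abs{\mu'}(0)h$ and $E_j\le\sum_{k<j}\eta_k\le\eta$ from \Cref{eq:bound_on_T}, and set $\bar C_2\coloneqq\sup_jC_{\bar t_j,2}<\infty$ by (H3). The rate condition then requires $h\lesssim\abs{\mu'}(0)^{-1}\sqrt{\eta/(\bar C_2t_f)}$, so $h$ must be chosen after $\eta$. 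It also requires a smallness condition $\eta\lesssim 1/(\bar C_2t_f)$ to absorb the quadratic self-feedback of the accumulated error. That restriction is harmless, since proving the claim for small $\eta$ suffices.

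This self-feedback is exactly what the paper's proof controls by tracking $S_j$ through the recursion $S_{j+1}\le S_j+f_j(S_j)(\bar t_{j+1}-\bar t_j)+o(\bar t_{j+1}-\bar t_j)$, with $f_j(s)=\tilde C^*+C_{\bar t_j,2}(2s+\bar\varepsilon_j)^2-\lambda s$. The paper keeps the exponential weights and uses $\lambda>0$ to trap $S_j$ below a level $\eta_0$ lying between the roots of $f_j$, which gives a bound that does not deteriorate with $t_f$. Once repaired, your budget argument discards that dissipation and pays with a $t_f$-dependent smallness requirement on $\eta$. That is acceptable for a fixed horizon and conceptually simpler.
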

\begin{proof}
  First, we claim that the time partition used below can be chosen finite, and we apply \Cref{lemma:local-residual-control} on each relinearization interval
  and propagate the resulting local residual bounds.
  For $j\ge1$, define
  \begin{align*}
    S_j\coloneqq
    \sum_{k=1}^{j-1}
    \Psi_{\bar{t}_k,\bar{t}_{k+1}}
    e^{-\lambda\prt{\bar{t}_j-\bar{t}_{k+1}}}.
  \end{align*}
  Recalling \Cref{def:A}, we get
  \begin{align*}
    A^{*}_{\bar{\varepsilon}_j,\bar{t}_j,\bar{t}_{j+1}}
    \coloneqq \sup_{t\in[\bar{t}_j,\bar{t}_{j+1}]}A_{\bar{\varepsilon}_j,\bar{t}_j,t}
     & \leq 2\norm{T_{\bar{t}_j}-\dec_{\bar{t}_j}}_{L^2_{\mu_0}}
    +\norm{\dec_{\bar{t}_j}-\bar\dec_{\bar{t}_j}}_{L^2_{\mu_0}} +\abs{\mu'}\prt{\bar{t}_j}
    \int_{\bar{t}_j}^{\bar{t}_{j+1}}e^{-\lambda\prt{s-\bar{t}_j}}\, ds                               \\
     & \leq 2\int_0^{\bar{t}_j}\norm{r_s}_{L^2_{\mu_0}}e^{-\lambda\prt{\bar{t}_j-s}}ds
    +\norm{\dec_{\bar{t}_j}-\bar\dec_{\bar{t}_j}}_{L^2_{\mu_0}} +\abs{\mu'}\prt{\bar{t}_j}
    \int_{\bar{t}_j}^{\bar{t}_{j+1}}e^{-\lambda\prt{s-\bar{t}_j}}\, ds                               \\
     & \leq 2\sum_{k=1}^{j-1}\Psi_{\bar{t}_k,\bar{t}_{k+1}}e^{-\lambda\prt{\bar{t}_j-\bar{t}_{k+1}}}
    +\norm{\dec_{\bar{t}_j}-\bar\dec_{\bar{t}_j}}_{L^2_{\mu_0}} +\abs{\mu'}\prt{\bar{t}_j}
    \int_{\bar{t}_j}^{\bar{t}_{j+1}}e^{-\lambda\prt{s-\bar{t}_j}}\, ds,
  \end{align*}
  Denoting by $C_{\mathrm{emb}}>0$ the embedding constant of $H^2_{\mu_0}\hookrightarrow L^2_{\mu_0}$, it follows that
  $A^{*}_{\bar{\varepsilon}_j,\bar{t}_j,\bar{t}_{j+1}}\leq 2S_j+C_{\mathrm{emb}}\bar{\varepsilon}_j+\abs{\mu'}\prt{\bar{t}_j}\prt{\bar{t}_{j+1}-\bar{t}_j}+o\prt{\bar{t}_{j+1}-\bar{t}_j}$.
  Moreover,
  \begin{align*}
    K_{\bar{t}_j,\bar{t}_{j+1}}                     & =C_{\bar{t}_j,2}\prt{\bar{t}_{j+1}-\bar{t}_j}+ o\prt{\bar{t}_{j+1}-\bar{t}_j},                                                                                                  \\
    R_{\bar{t}_j,\bar{t}_{j+1}}                     & =2C_{\bar{t}_j,2}\prt{2S_j+\bar{\varepsilon}_j}\prt{\bar{t}_{j+1}-\bar{t}_j}+o\prt{\bar{t}_{j+1}-\bar{t}_j},                                                                    \\
    L_{\bar{\varepsilon}_j,\bar{t}_j,\bar{t}_{j+1}} & =\prt{\tilde{C}^{*}_{\bar{\varepsilon}_j,\bar{t}_j,\bar{t}_{j+1}}+C_{\bar{t}_j,2}\prt{2S_j+\bar{\varepsilon}_j}^2}\prt{\bar{t}_{j+1}-\bar{t}_j}+o\prt{\bar{t}_{j+1}-\bar{t}_j}.
  \end{align*}
  Using the quantitative estimate obtained in the proof of
  \Cref{lemma:local-residual-control} on $[\bar t_j,\bar t_{j+1}]$,
  namely the explicit bound \Cref{eq:Psi_bound}, we obtain
  \begin{align*}
    \Psi_{\bar{t}_j,\bar{t}_{j+1}}\leq \prt{\tilde{C}^{*}_{\bar{\varepsilon}_j,\bar{t}_j,\bar{t}_{j+1}}+C_{\bar{t}_j,2}\prt{2S_j+\bar{\varepsilon}_j}^2}\prt{\bar{t}_{j+1}-\bar{t}_j}+o\prt{\bar{t}_{j+1}-\bar{t}_j}.
  \end{align*}
  By definition, $S_j$ satisfies the recursive relation
  $S_{j+1}=S_je^{-\lambda\prt{\bar{t}_{j+1}-\bar{t}_j}}+\Psi_{\bar{t}_{j},\bar{t}_{j+1}}$, so that
  \begin{align}\label{eq:recursive}
    S_{j+1}\leq S_j +\prt{\tilde{C}^{*}_{\bar{\varepsilon}_j,\bar{t}_j,\bar{t}_{j+1}}+C_{\bar{t}_j,2}\prt{2S_j+\bar{\varepsilon}_j}^2-\lambda S_j}\prt{\bar{t}_{j+1}-\bar{t}_j}+o\prt{\bar{t}_{j+1}-\bar{t}_j}.
  \end{align}
  For each $j$, set
  \begin{align*}
    f_j(s)
    \coloneqq
    \tilde{C}^{*}_{\bar{\varepsilon}_j,\bar{t}_j,\bar{t}_{j+1}}
    +
    C_{\bar{t}_j,2}\prt{2s+\bar{\varepsilon}_j}^2
    -
    \lambda s .
  \end{align*}
  By choosing the partition sufficiently fine and tolerances sufficiently low, we impose,
  for every $j$,
  \begin{align*}
    \prt{4C_{\bar t_j,2}\bar{\varepsilon}_j-\lambda}^2 & >16C_{\bar t_j,2}\prt[\big]{\tilde{C}^{*}_{\bar{\varepsilon}_j,\bar t_j,\bar t_{j+1}}+C_{\bar t_j,2}\bar{\varepsilon}_j^2}, \\
    \lambda                                            & >4C_{\bar t_j,2}\bar{\varepsilon}_j,
  \end{align*}
  so that $f_j$ has two positive roots:
  \begin{align*}
    \bar{C}_{\bar{t}_j}^{(\pm)}
    \coloneqq
    \frac{\lambda-4C_{\bar{t}_j,2}\bar{\varepsilon}_j}{8C_{\bar{t}_j,2}}
    \pm
    \sqrt{
    \prt[\Big]{\frac{\lambda-4C_{\bar{t}_j,2}\bar{\varepsilon}_j}{8C_{\bar{t}_j,2}}}^2
    -
    \frac{
    \tilde{C}^{*}_{\bar{\varepsilon}_j,\bar{t}_j,\bar{t}_{j+1}}
    +
    C_{\bar{t}_j,2}\bar{\varepsilon}_j^2
    }{
    4C_{\bar{t}_j,2}
    }
    } .
  \end{align*}
  By (H3), the constants $C_{\bar t_j,2}$ remain bounded.
  Hence, since $\bar{C}_{\bar t_j}^{(-)}\to 0$ as
  $\bar{\varepsilon}_j,\prt{\bar t_{j+1}-\bar t_j}\to 0$, while the roots
  $\bar{C}_{\bar t_j}^{(+)}$ remain uniformly bounded away from zero, there
  exists $\eta^*>0$ such that, for every $0<\eta_0<\eta^*$, one can choose
  $\bar{\mathbf t}$ and $\bar{\bm{\varepsilon}}$ so that
  \begin{align*}
    \max_j\bar{C}_{\bar t_j}^{(-)}
    <
    \eta_0
    <
    \min_j\bar{C}_{\bar t_j}^{(+)}.
  \end{align*}
  Given the prescribed accuracy $\eta>0$, choose
  $\eta_0\coloneqq\min\{\eta,\eta^*/2\}$.
  For this choice, $f_j(\eta_0)<0$ for every $j$.
  Thus, for each $j$ there exist $\gamma_j>0$ and $\varepsilon_j>0$ such that
  $f_j(s)\le -\gamma_j$ for every
  $s\in[\eta_0-\varepsilon_j,\eta_0]$.

  We prove by induction that $S_j<\eta_0$ for every $j$.
  Since $S_1=0$ and the identity initial map is represented exactly, we may choose $\bar\dec_{\bar t_1}=\dec_0=\Id$ and $\bar{\varepsilon}_1=0$. Hence
  $S_2=\Psi_{\bar t_1,\bar t_2}\to0$ as
  $\bar t_2-\bar t_1\to0$.
  Thus, by choosing the first time step small enough, $S_2<\eta_0$.
  Assume now that $S_j<\eta_0$.
  If $S_j\in[\eta_0-\varepsilon_j,\eta_0]$, then, since the remainder in
  \Cref{eq:recursive} is uniform for bounded $S_j$, by reducing
  $\bar t_{j+1}-\bar t_j$ if necessary we get
  \begin{align*}
    S_{j+1}
    \le
    S_j
    -
    \frac{\gamma_j}{2}
    (\bar t_{j+1}-\bar t_j)
    <
    \eta_0 .
  \end{align*}
  If instead $0\le S_j<\eta_0-\varepsilon_j$, then
  \begin{align*}
    S_{j+1}
    =
    e^{-\lambda\prt{\bar t_{j+1}-\bar t_j}}S_j
    +
    \Psi_{\bar t_j,\bar t_{j+1}}
    \le
    S_j+\Psi_{\bar t_j,\bar t_{j+1}} .
  \end{align*}
  Since $\Psi_{\bar t_j,\bar t_{j+1}}\to0$ as
  $\bar t_{j+1}-\bar t_j\to0$, refining the partition further gives
  $\Psi_{\bar t_j,\bar t_{j+1}}<\varepsilon_j$ and hence
  $S_{j+1}<\eta_0$.
  Finally, recalling \Cref{eq:bound_on_T}:
  \begin{align*}
    \Wtwometric[\mu_{t_f}][\nu_{t_f}]\leq\norm{T_{t_f}-\dec_{t_f}}_{L^2_{\mu_0}}
     & \leq
    \int_0^{t_f}
    \norm{r_s}_{L^2_{\mu_0}}e^{-\lambda\prt{t_f-s}}\,\ds \\
     & \leq
    \sum_{k=1}^{K-1}
    \Psi_{\bar t_k,\bar t_{k+1}}
    e^{-\lambda\prt{t_f-\bar t_{k+1}}}
    = S_K
    <\eta_0
    \leq\eta .
  \end{align*}

  It remains now to prove that the partition can be chosen finite.
  Under the bootstrap condition $S_j\leq\eta_0$, the local estimates in
  \Cref{lemma:local-residual-control} depend on the relinearization time only
  through uniformly bounded quantities: the constants
  $C_{\bar t_j,1},C_{\bar t_j,2}$ by (H3), the metric derivative
  $\abs{\mu'}\prt{\bar t_j}$ by \Cref{eq:metric-derivative-decay}, and the accumulated
  residual $S_j$. Hence there exist $\delta_*>0$ and $\varepsilon_*>0$ such
  that the local construction applies on every interval of length at most
  $\delta_*$ with tolerance at most $\varepsilon_*$. Taking
  $\bar t_{j+1}=\min\{\bar t_j+\delta_*,t_f\}$ and
  $\bar\varepsilon_j\leq\varepsilon_*$ gives a finite partition with
  $K\leq \lceil t_f/\delta_*\rceil+1$.
\end{proof}

\begin{corollary}[Absolute continuity]\label{cor:AC}
  Under the assumptions $(H1)$--$(H3)$, let $\bar{\mathbf t}$ and
  $\bar{\bm{\varepsilon}}$ be the finite partition and
  the tolerances constructed in \Cref{thm:final-residual}.
  Then the corresponding relinearized trajectory satisfies
  $(\dec_t)_{t\in[0,t_f]}\in AC([0,t_f];H^2_{\mu_0})$,
  and, in particular, $(\dec_t)_{t\in[0,t_f]}\in AC([0,t_f];L^2_{\mu_0})$.
\end{corollary}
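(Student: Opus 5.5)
The plan is to glue together local absolute continuity on each relinearization interval. On every interval $[\bar t_j,\bar t_{j+1}]$ of the finite partition produced by \Cref{thm:final-residual}, the background space $\bW_{m,\bar t_j}$ is fixed and finite dimensional. By construction it lies in $H^2_{\mu_0}$, and by \Cref{thm:params-evolution} the trajectory satisfies $\dec_t\in\bW_{m,\bar t_j}$. Moreover, in the non-degenerate regime $\bc_t\neq0$ the projected dynamics \Cref{eq:projected_dyn} reduces to $\dot\dec_t=\proj{\bW_{m,\bar t_j}}{L^2_{\mu_0}}\prt{\cL(\dec_t)}$. This is an ODE on a finite-dimensional space, so \Cref{lemma:finite-dimensional-AC}, already invoked in the proof of \Cref{lemma:local-residual-control}, gives $\dec\in AC([\bar t_j,\bar t_{j+1}];H^2_{\mu_0})$.

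To justify applying it on each full interval, I would check that the right-hand side is well defined and locally Lipschitz along the trajectory. Assumption (H3) says that $\cL$ restricted to the affine space $\bW_{m,\bar t_j}+\bar\dec_{\bar t_j}$ is $C^2$. The trajectory differs from $\bar\dec_{\bar t_j}$ by an element of $\bW_{m,\bar t_j}+(\dec_{\bar t_j}-\bar\dec_{\bar t_j})$, and that offset is also controlled in $H^2_{\mu_0}$ by $\bar\varepsilon_j$. The orthogonal projection onto $\bW_{m,\bar t_j}$ is bounded, and on a finite-dimensional space all norms are equivalent, including $L^2_{\mu_0}$ and $H^2_{\mu_0}$ restricted to $\bW_{m,\bar t_j}$. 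So the vector field is $C^1$ in coordinates.

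The trajectory exists up to $\bar t_{j+1}$ without blow-up. The reason is the bound established in \Cref{thm:final-residual}: the bootstrap $S_j<\eta_0$ together with the estimate on $A^*$ keeps $\norm{\dec_t-\bar\dec_{\bar t_j}}$ bounded on the whole interval. In particular $\dec_t$ stays in a compact subset of the finite-dimensional space, where the field is Lipschitz. Hence $\dot\dec\in L^1(\bar t_j,\bar t_{j+1};H^2_{\mu_0})$, indeed bounded, and $\dec_t=\dec_{\bar t_j}+\int_{\bar t_j}^t\dot\dec_s\,\ds$ holds in $H^2_{\mu_0}$.

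Next I would concatenate the pieces. At each relinearization time the starting map is $\cT^*_{\bar t_j}=\dec_{\bar t_j}$, so the trajectory is continuous across $\bar t_j$. Only the background space changes there, and the map itself does not jump. A function that is continuous on $[0,t_f]$ and absolutely continuous on each of finitely many consecutive closed subintervals is absolutely continuous on $[0,t_f]$. Indeed, the integral representation with $\dot\dec\in L^1(0,t_f;H^2_{\mu_0})$ holds piecewise and therefore globally, since $K<\infty$. The $L^2_{\mu_0}$ statement then follows from the continuous embedding $H^2_{\mu_0}\hookrightarrow L^2_{\mu_0}$ with constant $C_{\mathrm{emb}}$: it gives $\norm{\dec_t-\dec_s}_{L^2_{\mu_0}}\le C_{\mathrm{emb}}\norm{\dec_t-\dec_s}_{H^2_{\mu_0}}$, so the $L^2_{\mu_0}$ curve is also AC.

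The main obstacle is the second step: showing that the local ODE solution really exists on the whole interval $[\bar t_j,\bar t_{j+1}]$ and stays inside the region where (H3) gives regularity. The subtlety is that $\dec_t$ may sit in a slightly shifted affine space relative to $\bar\dec_{\bar t_j}$. I would handle this by reusing the a priori bound on $\sup_t\norm{\dec_t-\bar\dec_{\bar t_j}}$ from the proofs of \Cref{lemma:local-residual-control} and \Cref{thm:final-residual}, combined with a standard continuation argument for ODEs in finite dimension.
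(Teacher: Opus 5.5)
Your proposal follows the paper's proof: apply \Cref{lemma:finite-dimensional-AC} on each $[\bar t_j,\bar t_{j+1}]$, where $\dec_t$ lives in the finite-dimensional space $\bW_{m,\bar t_j}\subset H^2_{\mu_0}$; glue the pieces using continuity at the relinearization times and finiteness of the partition; then pass to $L^2_{\mu_0}$ by the embedding. Your extra existence and regularity discussion goes beyond what the paper needs, since the trajectory is given and the lemma only uses continuity of $\cL$ on $\bW_{m,\bar t_j}$. Note, however, that the smallness of the offset $\dec_{\bar t_j}-\bar\dec_{\bar t_j}$ cannot transfer the regularity in (H3) from the parallel affine space $\bW_{m,\bar t_j}+\bar\dec_{\bar t_j}$ to $\bW_{m,\bar t_j}$ itself, so that continuity should instead come from the paper's standing assumption that $\cL$ is twice Fréchet differentiable from $H^2_{\mu_0}$ to $L^2_{\mu_0}$.
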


\begin{proof}
  Fix $j\in\{1,\dots,K-1\}$.
  On the interval $[\bar t_j,\bar t_{j+1}]$, the relinearized trajectory evolves
  in the finite-dimensional space $\bW_{m,\bar t_j}\subset H^2_{\mu_0}$ and
  solves the projected dynamics driven by
  $\proj{\bW_{m,\bar t_j}}{L^2_{\mu_0}}\cL\prt{\dec_t}$.
  Applying \Cref{lemma:finite-dimensional-AC} gives
  \begin{align*}
    (\dec_t)_{t\in[\bar t_j,\bar t_{j+1}]}\in AC([\bar t_j,\bar t_{j+1}];H^2_{\mu_0}).
  \end{align*}
  Since the partition is finite and the relinearized trajectory is continuous
  at the relinearization times, the local absolute-continuity properties glue
  together, yielding $(\dec_t)_{t\in[0,t_f]}\in AC([0,t_f];H^2_{\mu_0})$.
  The continuous embedding $H^2_{\mu_0}\hookrightarrow L^2_{\mu_0}$ then gives
  $(\dec_t)_{t\in[0,t_f]}\in AC([0,t_f];L^2_{\mu_0})$.
\end{proof}

\begin{corollary}[Integrability of the subdifferential along the projected curve]\label{cor:subdiff-integrability}
  Under the assumptions $(H1)$--$(H3)$, let
  $\bar{\mathbf t}$ and $\bar{\bm{\varepsilon}}$ be the finite partition and
  the tolerances constructed in \Cref{thm:final-residual}, and let
  $\nu_t\coloneqq\pushfwd{\dec_t}{\mu_0}$.
  Then
  \begin{align*}
    \int_0^{t_f}
    \norm{\subdifferentialelement\prt{\nu_t}}^2_{L^2_{\nu_t}}
    \dt
    <+\infty .
  \end{align*}
\end{corollary}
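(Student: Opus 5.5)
The plan is to express the subdifferential along the projected curve through the Lagrangian velocity operator, and then use continuity of $t\mapsto\dec_t$ in $H^2_{\mu_0}$ on a compact time interval.

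\textbf{Pullback identity.} By the pushforward formula and the definition of $\cL$ in \Cref{eq:lagrangian_velocity_operator},
\begin{align*}
  \norm{\subdifferentialelement\prt{\nu_t}}_{L^2_{\nu_t}}
  =
  \norm{\subdifferentialelement\prt{\nu_t}\circ\dec_t}_{L^2_{\mu_0}}
  =
  \norm{\cL\prt{\dec_t}}_{L^2_{\mu_0}} .
\end{align*}
Here $\dec_t$ is a diffeomorphism, so $\nu_t$ has a density and lies in $\dom\prt{\cF}$. It therefore suffices to show that $t\mapsto\norm{\cL(\dec_t)}_{L^2_{\mu_0}}$ is bounded on $[0,t_f]$; square integrability then follows because the interval is bounded.

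\textbf{Boundedness on each subinterval.} Fix $j$ and work on $[\bar t_j,\bar t_{j+1}]$. By \Cref{cor:AC}, the trajectory $\dec$ belongs to $AC([\bar t_j,\bar t_{j+1}];H^2_{\mu_0})$, and it stays in the affine finite-dimensional set $\bar\dec_{\bar t_j}+\bW_{m,\bar t_j}$ up to the initial offset. Its image $K_j=\set{\dec_t\cond t\in[\bar t_j,\bar t_{j+1}]}$ is compact in $H^2_{\mu_0}$ and lies in a finite-dimensional affine space. By (H3), $\cL$ restricted to that affine space is $C^2$, hence continuous, so it is bounded on the compact set $K_j$. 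An alternative is to use the Taylor expansion \Cref{eq:taylor_exp}:
\begin{align*}
  \norm{\cL(\dec_t)}_{L^2_{\mu_0}}
  \le
  \norm{\cL(\bar\dec_{\bar t_j})}_{L^2_{\mu_0}}
  + C_{\bar t_j,1}\norm{\dec_t-\bar\dec_{\bar t_j}}_{H^2_{\mu_0}}
  + C_{\bar t_j,2}\norm{\dec_t-\bar\dec_{\bar t_j}}_{H^2_{\mu_0}}^2 .
\end{align*}
The right-hand side is bounded because the constants are uniformly bounded and $\dec_t$ is continuous in $H^2_{\mu_0}$.

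\textbf{Main obstacle.} The delicate point is justifying that $\dec_t$ stays in a region where $\cL$ is controlled. Two sub-points need care.
\begin{enumerate}
  \item \Cref{eq:taylor_exp} is valid only for $\dec_t$ sufficiently close to $\bar\dec_{\bar t_j}$. I would handle this by relying directly on continuity of $\cL$ on the affine space from (H3) together with compactness. If that is insufficient, I would use the smallness of $\norm{\dec_t-\bar\dec_{\bar t_j}}$ obtained in the proof of \Cref{lemma:local-residual-control}.
  \item The frame part $v_{\bar t_j,i}$ of the space must be compatible with the affine set on which (H3) is assumed. This holds since $\dec_{\bar t_j}\in\bW_{m,\bar t_j}$.
\end{enumerate}

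\textbf{Conclusion.} Since the partition from \Cref{thm:final-residual} is finite, the supremum over $j$ of these bounds is finite. Writing $M$ for this supremum,
\begin{align*}
  \int_0^{t_f}\norm{\subdifferentialelement\prt{\nu_t}}^2_{L^2_{\nu_t}}\dt
  \le t_f\, M^2
  < +\infty .
\end{align*}
In particular, the hypothesis of \Cref{prop:projected-energy-decay} holds.
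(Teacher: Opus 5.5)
Your proposal is correct and follows essentially the same route as the paper. Both identify $\norm{\subdifferentialelement\prt{\nu_t}}_{L^2_{\nu_t}}$ with $\norm{\cL\prt{\dec_t}}_{L^2_{\mu_0}}$, use \Cref{cor:AC} and the continuity of $\cL$ on the relevant finite-dimensional space from (H3) to bound $\cL\prt{\dec_t}$ on each subinterval, and conclude by finiteness of the partition.

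One shared caveat: like the paper, you apply (H3), which is stated on the affine set $\bW_{m,\bar t_j}+\bar\dec_{\bar t_j}$, to the trajectory $\dec_t\in\bW_{m,\bar t_j}$. This is literally justified only when $\bar\dec_{\bar t_j}\in\bW_{m,\bar t_j}$, and your remark that $\dec_{\bar t_j}\in\bW_{m,\bar t_j}$ does not by itself close this.
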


\begin{proof}
  By \Cref{cor:AC}, the relinearized trajectory satisfies
  $(\dec_t)_{t\in[0,t_f]}\in AC([0,t_f];H^2_{\mu_0})$, and therefore
  $(\dec_t)_{t\in[0,t_f]}\in C([0,t_f];H^2_{\mu_0})$.
  Fix $j\in\{1,\dots,K-1\}$. By construction,
  $\dec_t\in\bW_{m,\bar t_j}$ for every
  $t\in[\bar t_j,\bar t_{j+1}]$. Hence, from (H3), the map
  $t\mapsto\cL\prt{\dec_t}$ is continuous from
  $[\bar t_j,\bar t_{j+1}]$ to $L^2_{\mu_0}$, and
  \begin{align*}
    \sup_{t\in[\bar t_j,\bar t_{j+1}]}
    \norm{\cL\prt{\dec_t}}_{L^2_{\mu_0}}
    <+\infty .
  \end{align*}
  Since the partition is finite, it follows that
  $\cL\prt{\dec_t}\in L^2(0,t_f;L^2_{\mu_0})$.
  Finally, by the definition of the Lagrangian velocity operator,
  $\norm{\subdifferentialelement\prt{\nu_t}}_{L^2_{\nu_t}}
    =
    \norm{\cL\prt{\dec_t}}_{L^2_{\mu_0}}$
  for a.e.\ $t\in[0,t_f]$, and this concludes the proof.
\end{proof}

\section{Numerical results}
\label{sec:numerical-results}
In this section, we assess the method on linear and nonlinear Fokker--Planck equations,
porous-medium diffusion, and nonlocal interaction dynamics, comparing the
computed solutions with finite element references computed using mesh size $h=0.04$ and time step $\Delta t=2^{-10}$, or analytic expressions, when available.
We remark that, in the following experiments, the background space is built
only once, at the initial time, and then kept fixed throughout the simulation.
Indeed, this simplified setting is already enough for the considered benchmarks to obtain good accuracy with a moderate number of basis functions, and it also allows us to isolate the effect of the taylor-based construction.
When dealing with more demanding test cases, i.e. higher dimensional regimes, the re-linearization strategy previously introduced in the context of \Cref{sec:tb_background} could instead be used to update
the background space along the evolution, potentially keeping the number of
basis functions under control.

\subsection{Linear Fokker--Planck equation}

We start with the linear Fokker--Planck equation associated with the quadratic
potential, namely the Wasserstein gradient flow of
\begin{align*}
  \cF(\mu)
  =
  \int_{\bR^d}
  \left(\rho(x)\log\rho(x)+\frac{1}{2}|x|^2\rho(x)\right)\dx,
  \qquad\text{where }
  \mu=\rho\cL^d .
\end{align*}
This constitutes a natural first benchmark: Gaussian densities remain Gaussian, and the
exact Lagrangian map acting on a Gaussian initial condition is affine with time-dependent slope and offset.
Moreover, $\cF$ is $\lambda$-geodesically convex with convexity constant $\lambda=1$, so that the energy decay is controlled by an exponential rate (see \Cref{eq:energy_decay} in \Cref{app:wasserstein-convexity}).
Since $\lambda>0$,
we display the energy theoretical exponential decay in \Cref{fig:linear_FP_1d} and \Cref{fig:linear_FP_2d} with red triangles, respectively for $1$D and $2$D test cases.

\begin{figure}[hbt!]
  \centering
  \begin{subfigure}[t]{0.62\textwidth}
    \centering
    \includegraphics[width=\textwidth]{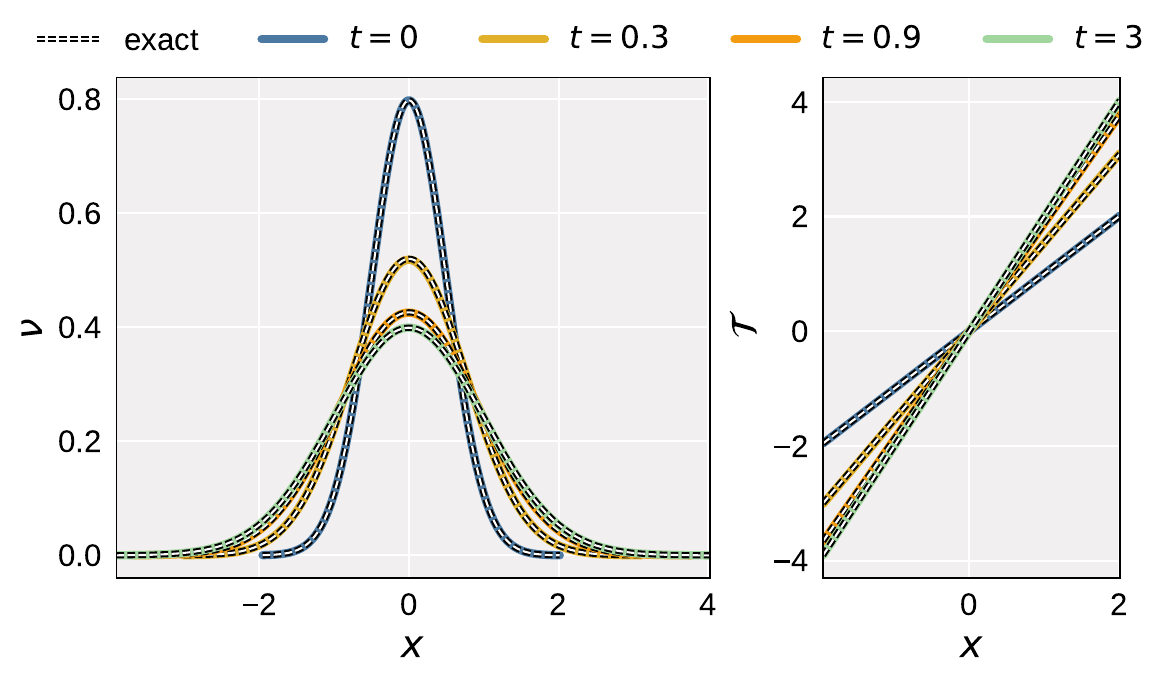}
    \caption{}
    \label{subfig:linear_FP_1d_a}
  \end{subfigure}
  \hfill
  \begin{subfigure}[t]{0.32\textwidth}
    \centering
    \includegraphics[width=\textwidth]{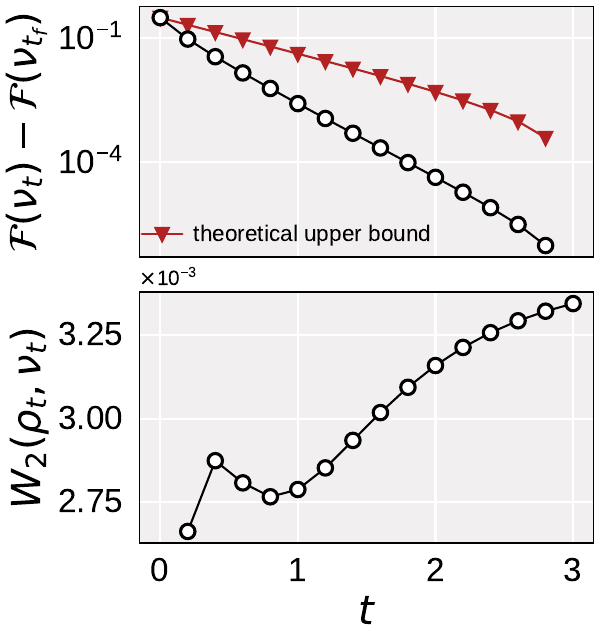}
    \caption{}
    \label{subfig:linear_FP_1d_b}
  \end{subfigure}
  \caption{\footnotesize One-dimensional linear Fokker--Planck equation with
  $n=2$, initialized with a centered Gaussian $\mu_0\sim\cN(0.0,0.5)$. The density $\nu_t$ and the transport map $\dec_t$ are compared with the analytic solutions in \ref{subfig:linear_FP_1d_a}, and the
  Wasserstein error is reported together with the energy decay and its
  theoretical upper bound in \ref{subfig:linear_FP_1d_b}. The results are obtained with a time-step size $\Delta t=2^{-6}$, $t_f=3$ and $N_{\text{samples}}=2^{15}$.}
  \label{fig:linear_FP_1d}
\end{figure}

In this context, \Cref{fig:linear_FP_1d} shows that a two-dimensional adaptive space is enough
to recover the exact one-dimensional dynamics. This is consistent with the
affine form of the transport map. The Wasserstein error remains small over the
whole interval, and the energy decay follows the theoretical bound, confirming
that the dynamical approximation preserves the dissipative behavior of the
gradient flow.

\begin{figure}[hbt!]
  \centering
  \begin{subfigure}[t]{0.62\textwidth}
    \centering
    \includegraphics[width=\textwidth]{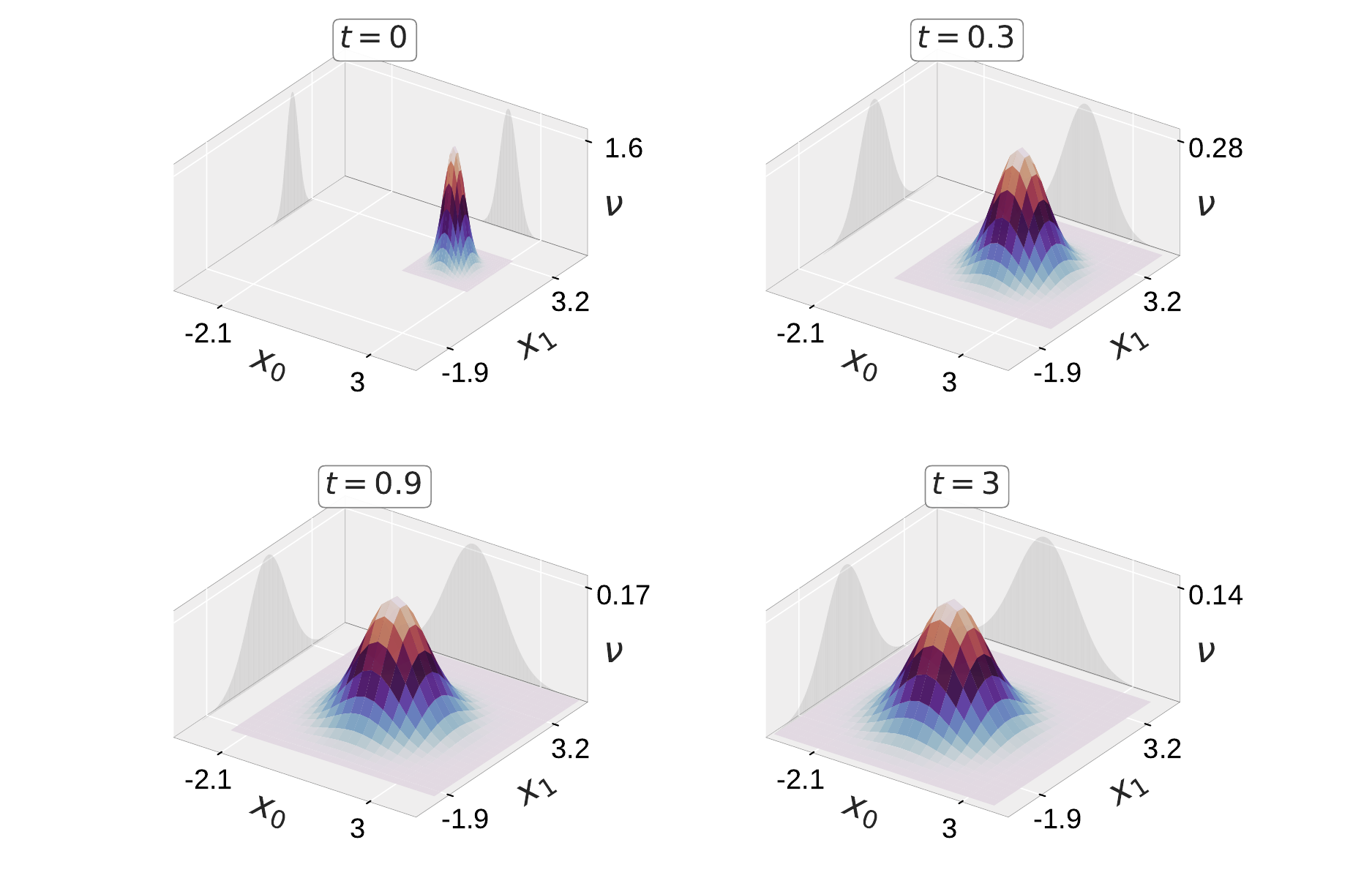}
    \caption{}
    \label{subfig:linear_FP_2d_a}
  \end{subfigure}
  \hfill
  \begin{subfigure}[t]{0.32\textwidth}
    \centering
    \includegraphics[width=\textwidth]{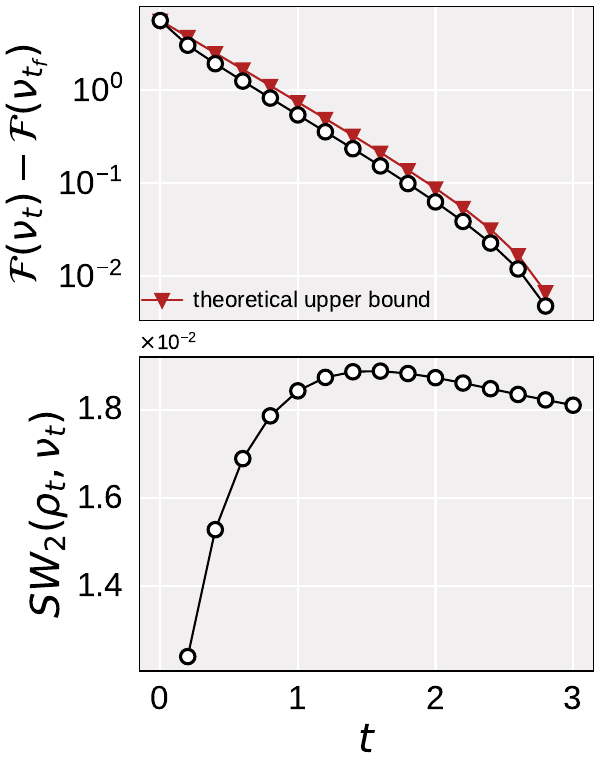}
    \caption{}
    \label{subfig:linear_FP_2d_b}
  \end{subfigure}
  \caption{\footnotesize Two-dimensional linear Fokker--Planck equation with
  $n=6$, initialized with a non-centered Gaussian $\mu_0\sim\cN\big((2.0,2.0),\operatorname{diag}((0.3,0.3))\big)$. The density evolution is displayed in \ref{subfig:linear_FP_2d_a}, and the
  Wasserstein error is reported together with the energy decay and its
  theoretical upper bound in \ref{subfig:linear_FP_2d_b}. The results are obtained with a time-step size $\Delta t=2^{-6}$ and $N_{\text{samples}}=2^{15}$.
      The reference solution is computed with a finite element method.}
  \label{fig:linear_FP_2d}
\end{figure}

The same behavior is observed in dimension two in \Cref{fig:linear_FP_2d}. Starting from a non-centered
Gaussian, the density first moves toward the origin and then relaxes to the
stationary distribution.
Notably, our method does not rely on any underlying mesh structure for the local approximation of the transport map. As a result, extending it to higher dimensions is straightforward, since the main additional requirement is the ability to compute integrals in higher-dimensional spaces, which in our case is handled through Monte Carlo sampling.
In this example, the number of basis functions is still relatively small, but it increases with respect to the one-dimensional case, since in two dimensions there are up to six basis functions of degree at most one, rather than just two.
The error and the energy decay remain consistent with
the exponential bound for $\lambda=1$, showing that the construction extends directly to
the multidimensional linear case.

\begin{figure}[hbt!]
  \centering
  \begin{subfigure}[t]{0.55\textwidth}
    \centering
    \includegraphics[width=\textwidth]{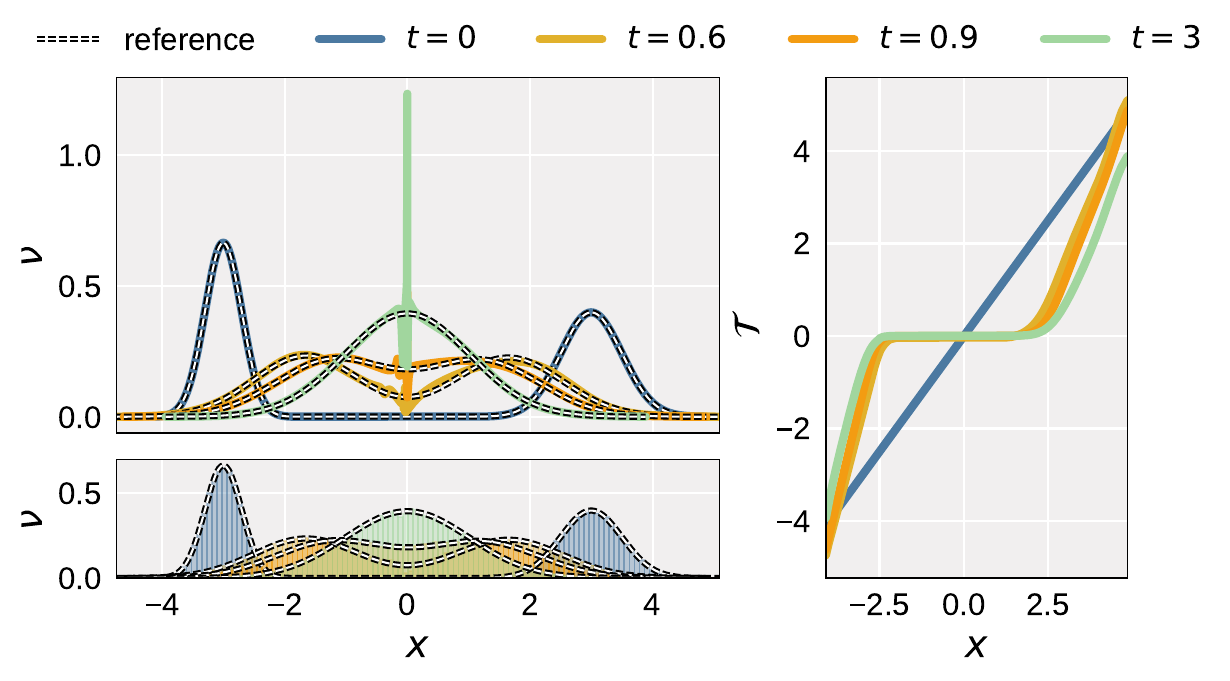}
    \caption{}
    \label{subfig:linear_FP_1d_mg_a}
  \end{subfigure}
  \hfill
  \begin{subfigure}[t]{0.40\textwidth}
    \centering
    \includegraphics[width=\textwidth]{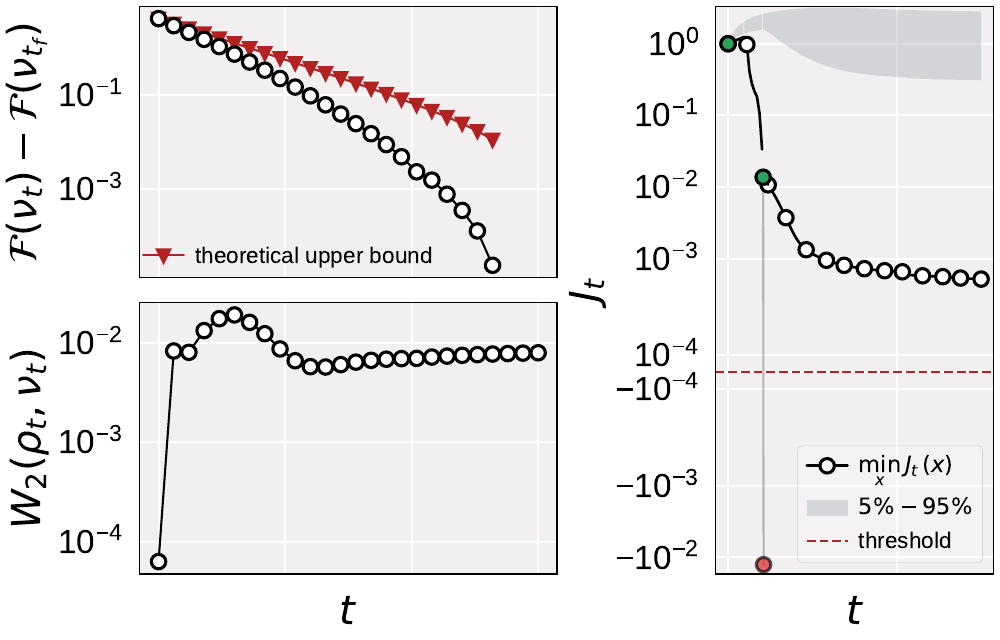}
    \caption{}
    \label{subfig:linear_FP_1d_mg_b}
  \end{subfigure}
  \caption{\footnotesize One-dimensional linear Fokker--Planck equation with $n=8$,
  initialized with $\mu_0\sim \frac12\mathcal N(-3,0.3)+\frac12\mathcal N(3,0.5)$. The density evolution is shown in \ref{subfig:linear_FP_1d_mg_a}: the upper panel is computed using the exact pushforward formula, while the lower one from the histogram of the transported samples.
  The Wasserstein error is reported together with the energy decay and its
  theoretical upper bound in \ref{subfig:linear_FP_1d_mg_b}, as well as the temporal evolution of the spatial minimum and the $5\%$--$95\%$ percentile range of the Jacobian determinant. The results are obtained with a time-step size $\Delta t=2^{-10}$ and $N_{\text{samples}}=2^{16}$.}
\end{figure}

From this experiment onward, whenever no analytic solution is available,
the Wasserstein error is measured against a finite element reference solution.
Thus, the reported error also reflects the accuracy of the reference
discretization, including the effects of mesh size, time stepping, and domain
truncation.

We finally consider a more demanding one-dimensional example, with an initial
mixture of Gaussians. Here the density components merge during the evolution,
and the transport map becomes close to singular, as it can be seen from \Cref{subfig:linear_FP_1d_mg_a}.
For this reason, we include a Jacobian diagnostic by displaying the minimum, over a
finite set of control points, of the determinant of the Jacobian of the
transport map. When this value crosses the invertibility threshold (red
markers), the computation is rolled back to an earlier safe time (green
markers) and restarted from there. The global map is then obtained by composing
the maps generated on the successive time intervals, so that the push-forward
formula remains well-defined on the whole time interval. In this way, we can exploit the aforementioned quantity as a numerical diagnostic to detect a-posteriori lack of invertibility of the decoder.

Small oscillations can be observed near the merging region in
\Cref{subfig:linear_FP_1d_mg_a}. These are consistent with the small values of
the Jacobian appearing in the push-forward formula: as its determinant
approaches zero, even small approximation errors may be amplified at the density
level. However, this effect is highly localized and does not translate into a
visible deterioration of the Wasserstein error. Indeed, the error remains well
controlled, and the empirical histograms obtained from the transported samples
are in very good agreement with the finite element reference solution in the bottom left panel.

\FloatBarrier

\subsection{Double-well Fokker--Planck equation}
\begin{figure}[hbt!]
  \centering
  \begin{subfigure}[t]{0.62\textwidth}
    \centering
    \includegraphics[width=\textwidth]{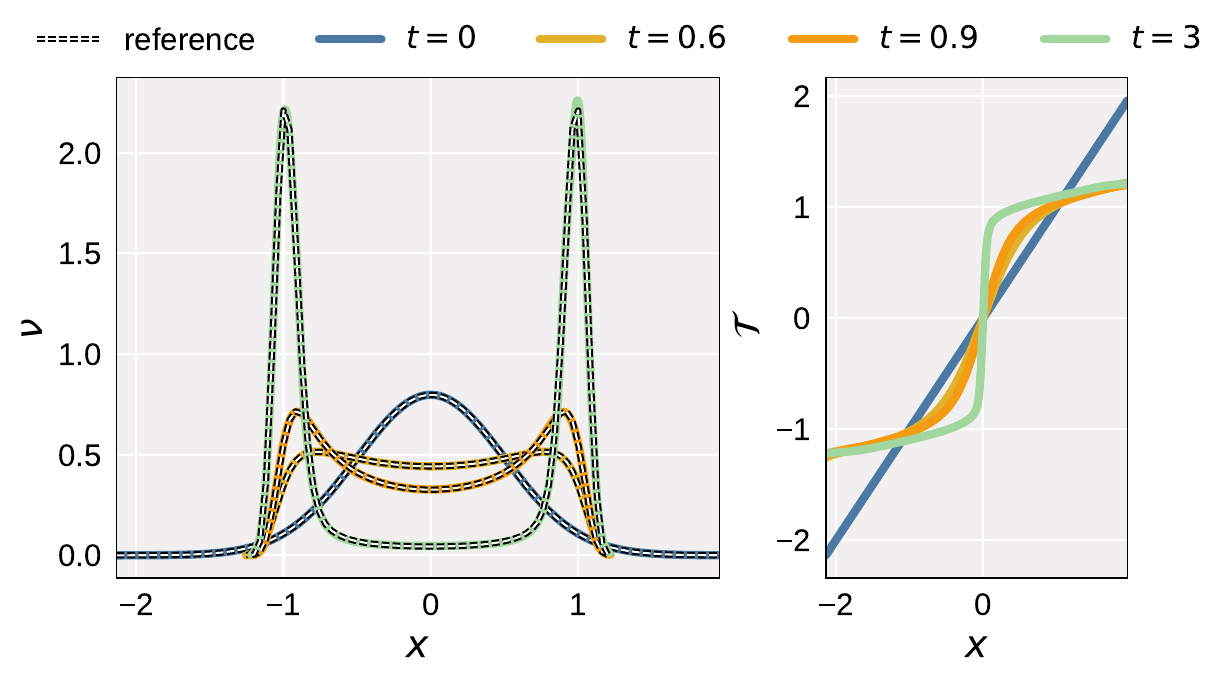}
    \caption{}
    \label{subfig:double_well_FP_1d_a}
  \end{subfigure}
  \hfill
  \begin{subfigure}[t]{0.32\textwidth}
    \centering
    \includegraphics[width=\textwidth]{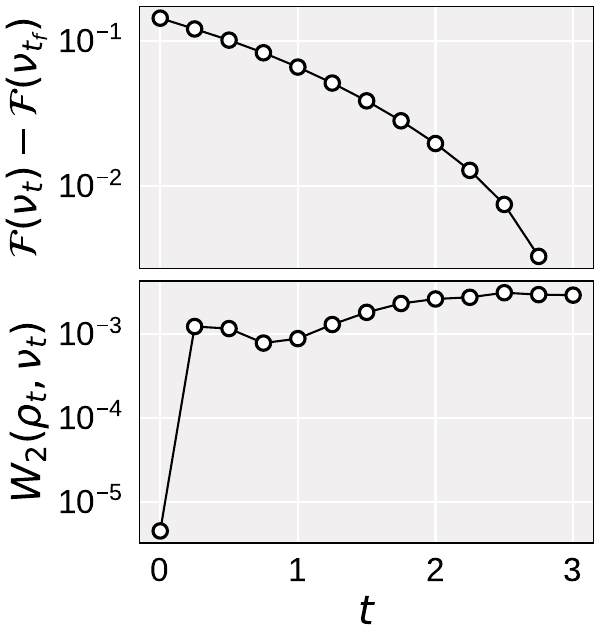}
    \caption{}
    \label{subfig:double_well_FP_1d_b}
  \end{subfigure}
  \caption{\footnotesize One-dimensional double-well Fokker--Planck equation for $k_L=0.001$
  with $n=12$, initialized with centered Gaussian $\mu_0\sim\cN(0,0.5)$.
  The density evolution is displayed in \ref{subfig:double_well_FP_1d_a}, and the
  Wasserstein error is reported together with the energy decay in \ref{subfig:double_well_FP_1d_b}.
  The results are obtained with a time-step size $\Delta t=2^{-5}$ and $N_{\text{samples}}=2^{17}$.}
  \label{fig:double_well_FP_1d}
\end{figure}

We next consider a non-convex confinement potential, namely the double-well
potential, coupled with a small amount of linear diffusion regulated by the coefficient $k_L$, with corresponding
energy
\begin{align*}
  \cF(\mu)
  =
  \int_{\bR^d}
  \left(k_L\rho(x)\log\rho(x)+\Big(\frac{1}{4}|x|^4-\frac{1}{2}|x|^2\Big)\rho(x)\right)\dx,
  \qquad
  \mu=\rho\cL^d .
\end{align*}

This test is complementary to the quadratic potential case: starting from a single
Gaussian, the density is driven away from the unstable region around the origin
and relaxes toward the two wells, producing two almost separated peaks in one
dimension (see \Cref{fig:double_well_FP_1d}). In this regime the transport map
stretches the central region rather than compressing it. Consequently, the
Jacobian does not approach the loss-of-invertibility regime observed in the
Gaussian-mixture experiment for the linear Fokker-Planck, and the push-forward formula remains well behaved
throughout the computation.

\begin{figure}[tbp!]
  \centering
  \includegraphics[width=\textwidth]{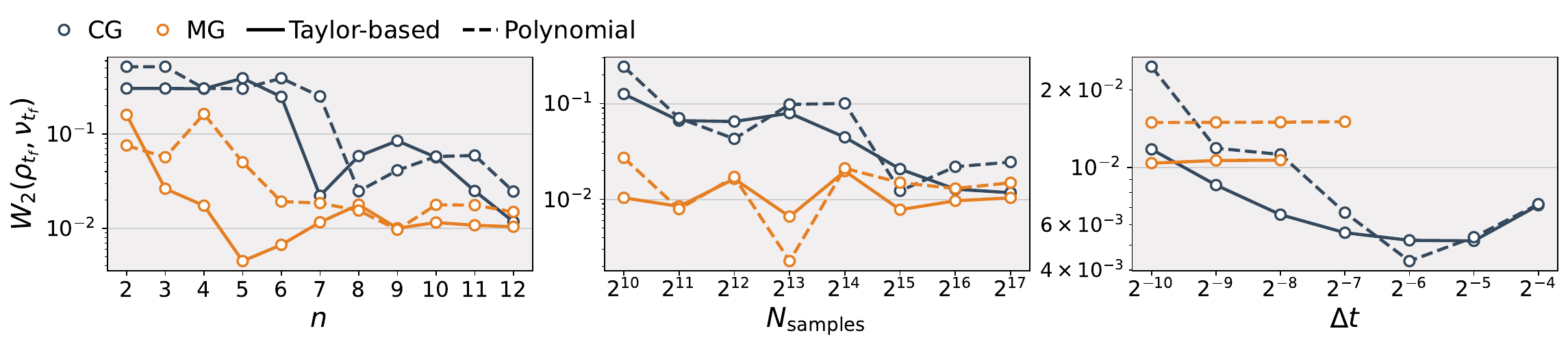}
  \caption{\footnotesize Error comparison for the one-dimensional
    double-well Fokker--Planck equation initialized both with a centered Gaussian $\mu_0\sim\cN(0,0.5)$ (CG) and multi-Gaussian $\mu_0\sim \frac{1}{2}\cN(-3,0.3)+\frac{1}{2}\cN(3,0.5)$ (MG).
    The plots report the Wasserstein error with respect to a finite element reference solution, displayed for different values of the number of basis functions, the number of samples, and the time-step size used in the method.}
  \label{fig:double_well_error_comparison}
\end{figure}

\begin{figure}[htb!]
  \centering
  \begin{subfigure}[t]{0.62\textwidth}
    \centering
    \includegraphics[width=\textwidth]{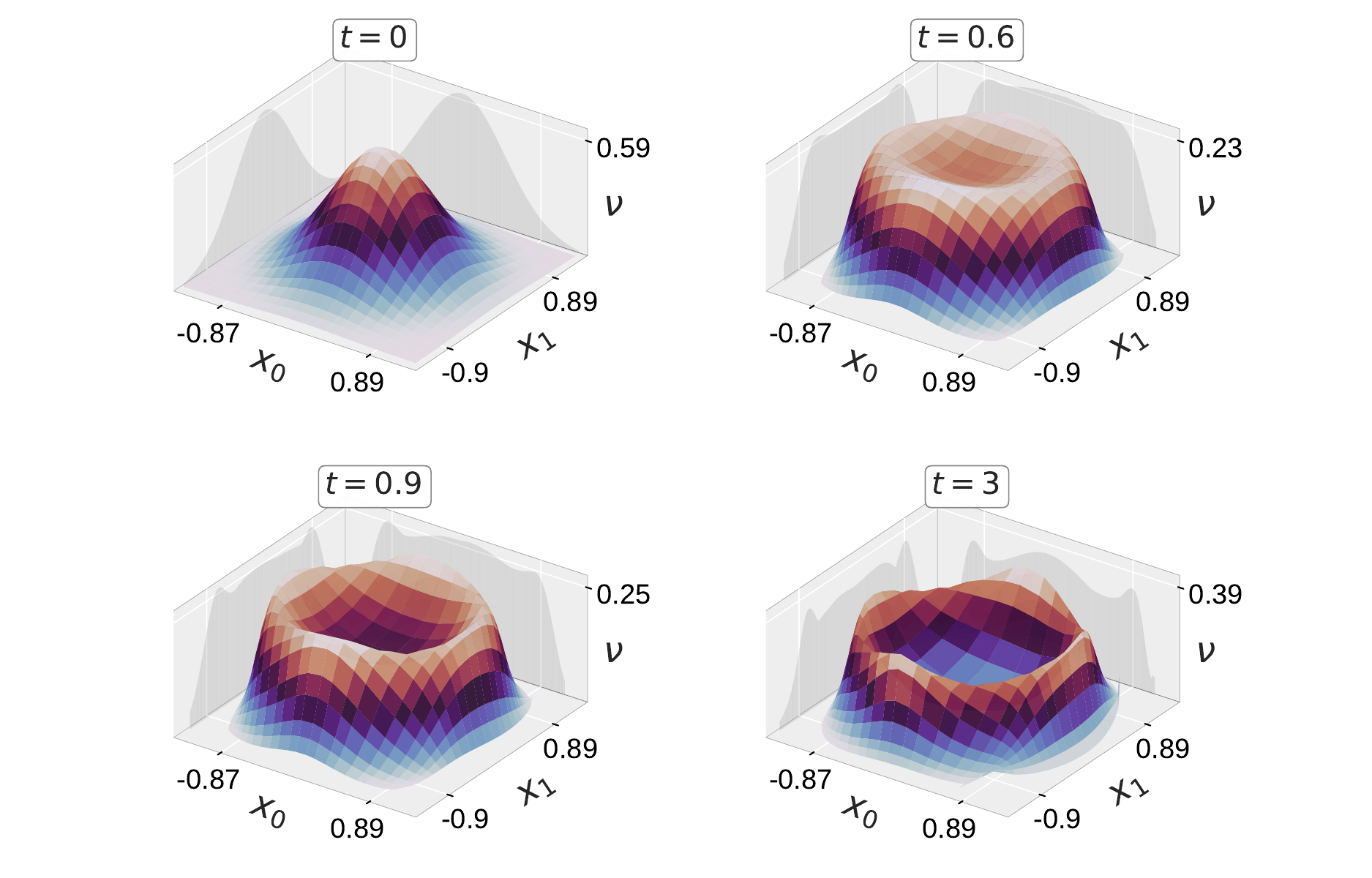}
    \caption{}
    \label{subfig:double_well_FP_2d_a}
  \end{subfigure}
  \hfill
  \begin{subfigure}[t]{0.32\textwidth}
    \centering
    \includegraphics[width=\textwidth]{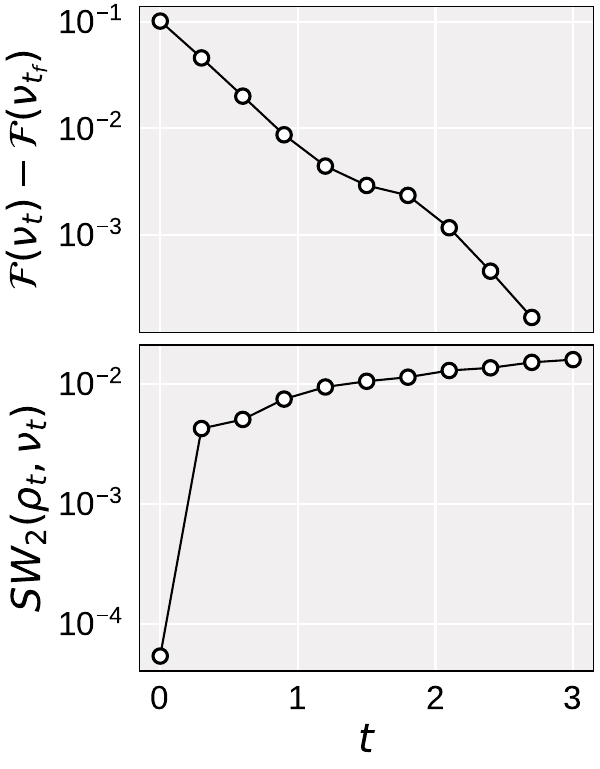}
    \caption{}
    \label{subfig:double_well_FP_2d_b}
  \end{subfigure}
  \caption{\footnotesize Two-dimensional double-well Fokker--Planck equation for $k_L=0.01$
  with $n=40$, initialized with a centered Gaussian $\mu_0\sim\cN\big((0,0),\operatorname{diag}(0.5,0.5)\big)$. The density evolution is
  displayed in \ref{subfig:double_well_FP_2d_a}, and the sliced Wasserstein
  error is reported together with the energy decay in
  \ref{subfig:double_well_FP_2d_b}. The results are obtained with a time-step
  size $\Delta t=2^{-8}$ and $N_{\text{samples}}=2^{15}$. The reference solution is
      computed with a finite element method.}
  \label{fig:double_well_FP_2d}
\end{figure}

\Cref{fig:double_well_error_comparison} shows how the
one-dimensional Wasserstein error depends on the number of basis functions, the
number of samples, and the time-step size, for both the Taylor-based background space, and a polynomial one.
The comparison shows that in this test case the
Taylor-based construction reaches a given accuracy with fewer basis functions
than the polynomial background space and remains accurate for larger time
steps. Increasing the number of samples has a milder effect on the accuracy, likely because it reduces the Monte Carlo quadrature error in the projected dynamics.

Finally, in \Cref{fig:double_well_FP_2d} we report the more demanding two-dimensional experiment of the radial double-well potential, where the set of minimizers is not a
pair of points but a curve, and the density reorganizes toward a stationary
state with radial structure. This requires a richer adaptive space than in the
linear Fokker--Planck examples, which is reflected in the larger value
$n=40$. In our implementation, the perturbation directions are taken as polynomial functions of progressively increasing degree, without any further selection criterion: all directions up to the prescribed degree are included. Consequently, in two dimensions, using 40 perturbation directions only covers polynomials up to degree five. This highlights a dimension-dependent cost of the present background-space construction. A natural direction for future work is to design selection criteria that identify the most informative perturbation directions, allowing the Taylor-based space to remain expressive while using fewer basis functions.
Nevertheless,
the method captures the qualitative shape of the density and preserves the
dissipative character of the gradient flow: the energy decreases monotonically,
and the error remains controlled over the whole time interval.

\FloatBarrier

\subsection{Nonlinear diffusion}
\begin{figure}[hbt!]
  \centering
  \begin{subfigure}[t]{0.62\textwidth}
    \centering
    \includegraphics[width=\textwidth]{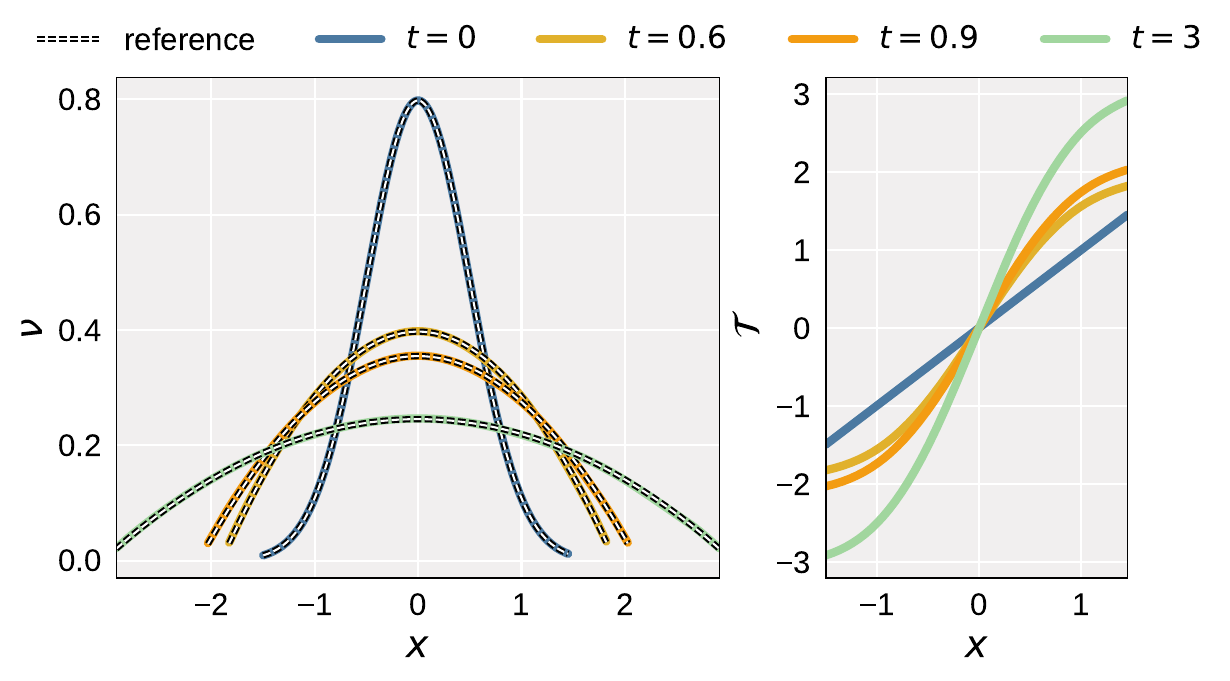}
    \caption{}
    \label{subfig:nonlinear_diffusion_1d_a}
  \end{subfigure}
  \hfill
  \begin{subfigure}[t]{0.32\textwidth}
    \centering
    \includegraphics[width=\textwidth]{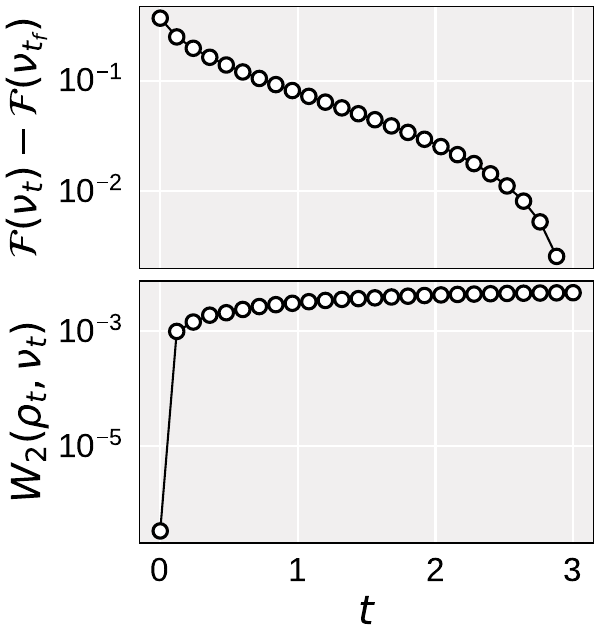}
    \caption{}
    \label{subfig:nonlinear_diffusion_1d_b}
  \end{subfigure}
  \caption{\footnotesize One-dimensional nonlinear diffusion equation with
  $m=2$ and $n=13$, initialized with a centered Gaussian
  $\mu_0\sim\cN(0.0,0.5)$. The density evolution is displayed in
  \ref{subfig:nonlinear_diffusion_1d_a}, and the Wasserstein error is
  reported together with the energy decay in
  \ref{subfig:nonlinear_diffusion_1d_b}. The results are obtained with a
  time-step size $\Delta t=2^{-6}$ and $N_{\text{samples}}=2^{15}$.}
  \label{fig:nonlinear_diffusion_1d}
\end{figure}

We finally consider the nonlinear diffusion equation, corresponding to the
Wasserstein gradient flow of the internal energy
\begin{align*}
  \cF_m(\mu)
  =
  \frac{1}{m-1}\int_{\bR^d}\rho(x)^m\dx,
  \qquad
  \mu=\rho\cL^d,\quad m>1 .
\end{align*}
This case differs from the previous Fokker--Planck tests because the
diffusivity depends on the density itself. Regions of high density spread
faster, while low-density regions evolve more slowly, producing the typical
finite-speed expansion of porous-medium dynamics.

\begin{figure}[htb]
  \centering
  \includegraphics[width=.9\textwidth]{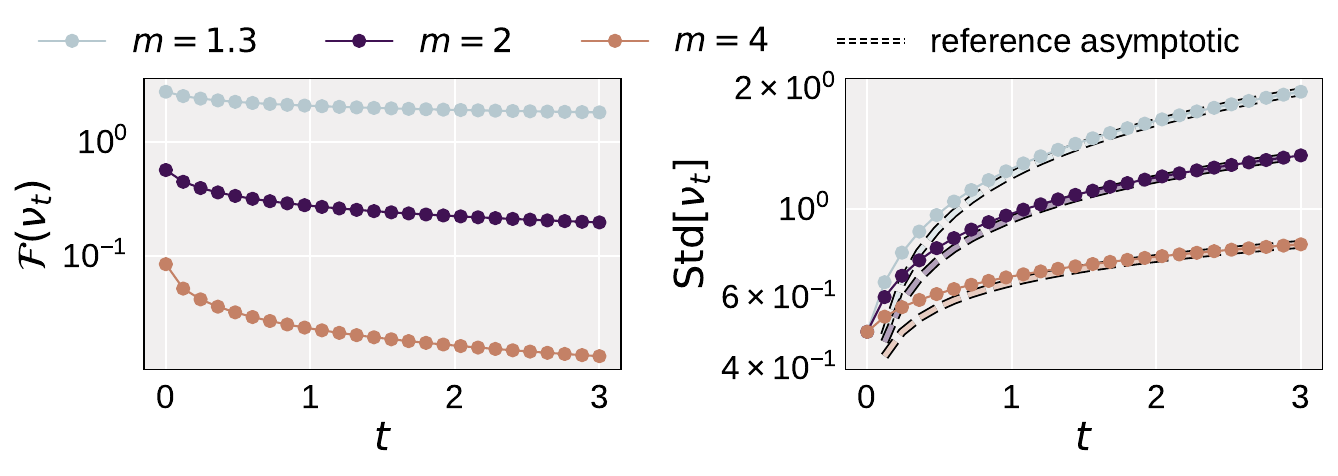}
  \caption{\footnotesize Energy decay (on the left) and spreading rate (on the right) for the one-dimensional nonlinear
  diffusion equation for different values of $m$ with $n=13$, $\Delta t=2^{-6}$ and $N_{\text{samples}}=2^{15}$, initialized with a centered Gaussian $\mu_0\sim\cN(0.0,0.5)$.
      The standard deviation of
      the numerical solution is compared with the expected self-similar scaling
      law.}
  \label{fig:nonlinear_diffusion_spreading}
\end{figure}

\Cref{fig:nonlinear_diffusion_1d} reports the one-dimensional case with
$m=2$. The method captures the spreading of the density, the Wasserstein error
with respect to the finite element reference remains controlled, and the energy
decreases along the whole time interval.
The associated transport map reflects this mechanism. Compared with the
Fokker--Planck tests, where the map also accounts for drift induced by a
confinement potential, here the dominant contribution is an approximately
symmetric expansion of the initial density. The deformation is stronger in the
region where the density is larger and weaker in the tails, in agreement with
the density-dependent diffusion characteristic of porous-medium flows.

In \Cref{fig:nonlinear_diffusion_spreading}
we compare several values of $m$ through the standard deviation of the solution.
The observed growth follows the expected self-similar law
$\operatorname{Std}(\rho_t)\sim C t^{1/(d(m-1)+2)}$ with $d=1$, confirming that
the method reproduces the correct large-scale spreading rate.

\subsection{Interaction potential}

\begin{figure}[htb]
  \centering
  \begin{subfigure}[t]{0.62\textwidth}
    \centering
    \includegraphics[width=\textwidth]{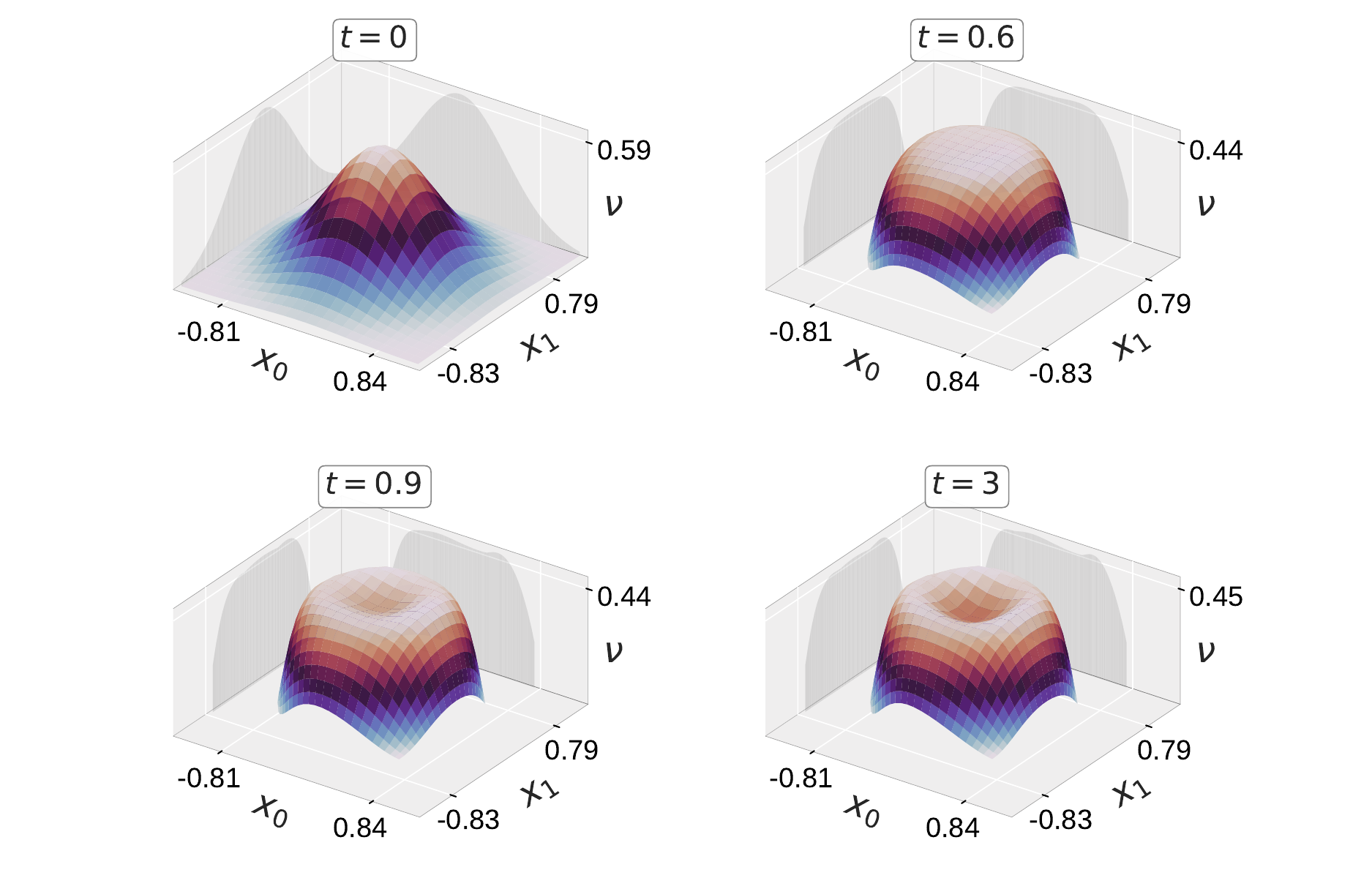}
    \caption{}
    \label{subfig:interaction_potential_2d_a}
  \end{subfigure}
  \hfill
  \begin{subfigure}[t]{0.32\textwidth}
    \centering
    \includegraphics[width=\textwidth]{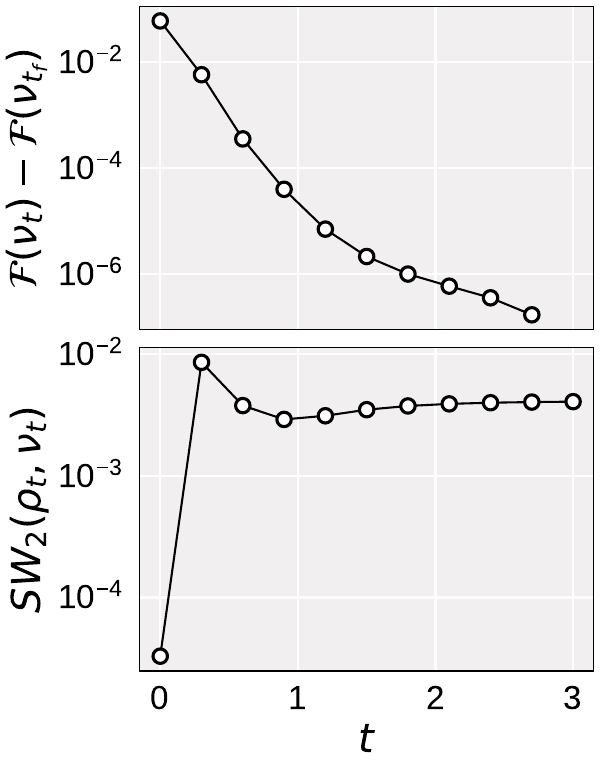}
    \caption{}
    \label{subfig:interaction_potential_2d_b}
  \end{subfigure}
  \caption{\footnotesize Two-dimensional interaction equation with $n=30$, initialized with a centered
  Gaussian $\mu_0\sim\cN\big((0,0),\operatorname{diag}(0.5,0.5)\big)$. The
  density evolution is displayed in \ref{subfig:interaction_potential_2d_a},
  and the Wasserstein error is reported together with the energy decay in
  \ref{subfig:interaction_potential_2d_b}. The results are obtained with a
  time-step size $\Delta t=2^{-8}$ and $N_{\text{samples}}=2^{15}$. The reference
      solution is computed with a finite element method.}
  \label{fig:interaction_potential_2d}
\end{figure}

We also test a nonlocal evolution driven by an interaction potential.
In this example, the energy is
\begin{align*}
  \cF(\mu)
  =
  k_L\int_{\bR^d} \rho(x)\log\rho(x)\dx
  +
  \frac12
  \int_{\bR^d}\int_{\bR^d}
  W(x-y)\rho(x)\rho(y)\dx\,\rd y,
  \qquad
  \mu=\rho\cL^d,
\end{align*}
with the attractive--repulsive potential
\begin{align*}
  W(z)=\frac14|z|^4-\frac12|z|^2 .
\end{align*}
The interaction term favors a nonzero separation between mass particles: nearby
mass is repelled, while particles farther apart are pulled back toward the
preferred interaction scale.
This example is qualitatively different from the previous ones because the
driving force at each point depends on the full distribution of mass. Hence,
the approximation space must capture not only local deformations of the
transport map, but also collective modes generated by the nonlocal interaction
term.
Therefore, compared with the confinement examples above, the
velocity field is determined by the whole density through the
convolution term $W*\rho$.

\Cref{fig:interaction_potential_2d} shows that the method captures the
nonlocal reorganization of the density induced by the interaction force. The adaptive
space remains sufficiently expressive to follow the evolution with $n=30$, and
the energy decreases monotonically along the computed trajectory.
Also in this case, as for Figure \ref{fig:double_well_FP_2d} above, the number of basis functions notably increases with the dimension.
As discussed above, this growth is largely due to our current choice of perturbation directions, which includes all monomials up to a prescribed degree without further selection. More selective constructions, for instance based on sparse polynomial spaces or problem-adapted criteria, could reduce the size of the background space while retaining the relevant directions.

\section{Conclusions}\label{sec:conclusions}
In this work, we introduce a meshless Lagrangian dynamical approximation method for Wasserstein gradient flows. 
The method approximates the transport-map evolution by a Stiefel decoder, whose moving frame is constrained to a Taylor-based background space which can be updated at prescribed times. 
This yields a regular approximation of the WGF solution measures, while avoiding the explicit solution of optimal-transport problems at each time step and not requiring the identification of spatial meshes.

The method induces an absolutely continuous curve in Wasserstein space that
satisfies a continuity equation with projected velocity. Moreover, for
geodesically convex energies, the Wasserstein error is controlled by a
Gronwall estimate in terms of the projection residual. In particular, by
suitably choosing the update times of the Taylor-based background space, the
final-time Wasserstein error can be made arbitrarily small.

The numerical experiments on linear and nonlinear Fokker--Planck equations, porous-medium diffusion, and interaction energies confirm that the method
captures the qualitative behavior of the corresponding WGFs,
including energy dissipation, spreading rates, and nonlocal density reorganization. 
They also show that the Taylor-based background space achieve a given accuracy with fewer basis functions than a fixed polynomial background space.

The main limitation of the present construction is the growth of the background
space dimension in higher dimensions. This is no longer a mesh-induced curse of
dimensionality, but rather an approximation-space bottleneck: the number of
Taylor directions may become large when the perturbation directions are not chosen wisely. 
Future work will therefore focus on sparser and more adaptive choices of the
perturbation directions in the Taylor-based construction for high-dimensional test cases, also exploiting the
possible update of the Taylor-based background space, with the aim of retaining the meshless
character of the method while improving its scalability.
Moreover, we aim to extend the method into a reduced-order modeling framework
for parametric WGFs, where the solution manifold depends
not only on time, but also on physical parameters and initial conditions.

{
	\small
	\bibliographystyle{unsrt}
	\bibliography{references}
}

\FloatBarrier
\clearpage
\appendix
\section{Convexity in the Wasserstein space}
\label{app:wasserstein-convexity}

In this appendix, we first recall the definition of Wasserstein geodesics in
\Cref{def:wasserstein-geodesic}, and then use this geodesic structure to state
the notion of $\lambda$-convexity for $\Wtwospace[\bR^d]$ in \Cref{def:lambda_convexity}.
We then derive the corresponding monotonicity estimates for the Wasserstein
subdifferential, and finally discuss the role of $\lambda$-convexity in the WGF setting.

\begin{definition}[Wasserstein geodesic]\label{def:wasserstein-geodesic}
  Let $\mu_0,\mu_1\in\Ptwospace[\bR^d]$ and let
  $\pi\in\Pi\prt{\mu_0,\mu_1}$ be an optimal transport plan. The curve
  $\prt{\mu_s}_{s\in[0,1]}$ defined by
  \begin{align*}
    \mu_s
    \coloneqq
    \pushfwd{\prt{\prt{1-s}\pi_1+s\pi_2}}{\pi},
    \qquad s\in[0,1],
  \end{align*}
  is a Wasserstein geodesic connecting $\mu_0$ and $\mu_1$.
  If $\mu_0$ is absolutely continuous and the optimal plan is induced by the
  optimal map $\Toptmap{\mu_0}{\mu_1}$, then this geodesic can be written as
  \begin{align*}
    \mu_s
    =
    \pushfwd{\prt{\prt{1-s}\id+s\Toptmap{\mu_0}{\mu_1}}}{\mu_0},\qquad s\in[0,1].
  \end{align*}
\end{definition}

\begin{definition}[$\lambda$-convexity along geodesics and generalized geodesics]\label{def:lambda_convexity}
  Let $\cF:\Ptwospace[\bR^d]\to(-\infty,+\infty]$.
  We say that $\cF$ is $\lambda$-\textit{convex along geodesics} if, for every
  $\mu_0,\mu_1\in\dom\prt{\cF}$ and every Wasserstein geodesic
  $\prt{\mu_s}_{s\in[0,1]}$ connecting them,
  \begin{equation}\label{eq:geodesic-convexity}
    \cF\prt{\mu_s}
    \le
    \prt{1-s}\cF\prt{\mu_0}+s\cF\prt{\mu_1}
    -\frac{\lambda}{2}s\prt{1-s}\Wtwometric[\mu_0][\mu_1][2],
    \qquad s\in [0,1].
  \end{equation}
  We say that $\cF$ is $\lambda$-\textit{convex along generalized geodesics}
  if, for every base measure $\omega\in\Ptwospace[\bR^d]$, every pair
  $\mu_0,\mu_1\in\dom\prt{\cF}$, and every pair of admissible transport maps
  $\Tmap{\omega}{\mu_i}\in L^2\prt{\omega;\bR^d}$ satisfying
  $\pushfwd{\Tmap{\omega}{\mu_i}}{\omega}=\mu_i$ for $i=0,1$, the curve
  $\mu_s
    =
    \pushfwd{\prt{\prt{1-s}\Tmap{\omega}{\mu_0}+s\Tmap{\omega}{\mu_1}}}{\omega}$ for $s\in[0,1]$
  satisfies
  \begin{equation}\label{eq:generalized-geodesic-convexity}
    \cF\prt{\mu_s}
    \le
    \prt{1-s}\cF\prt{\mu_0}+s\cF\prt{\mu_1}
    -\frac{\lambda}{2}s\prt{1-s}
    \int_{\bR^d}
    \normRd{\Tmap{\omega}{\mu_0}\prt{x}-\Tmap{\omega}{\mu_1}\prt{x}}^2
    \,\d\omega\prt{x}.
  \end{equation}
\end{definition}

Geodesic convexity induces the monotonicity of the Wasserstein subdifferential.
Indeed, let $\xi_0\in\partial\cF\prt{\mu_0}$, $\xi_1\in\partial\cF\prt{\mu_1}$, and assume that the
optimal transport maps between $\mu_0$ and $\mu_1$ are well defined.
Consider the geodesic connecting $\mu_0$ and $\mu_1$. From
\Cref{eq:geodesic-convexity}, the following holds:
\begin{align*}
  \frac{\cF\prt{\mu_s}-\cF\prt{\mu_0}}{s}
  \le
  \cF\prt{\mu_1}-\cF\prt{\mu_0}
  -
  \frac{\lambda}{2}\prt{1-s}\Wtwometric[\mu_0][\mu_1][2]\quad \forall s\in(0,1].
\end{align*}
Passing to the limit as $s\downarrow0$ and using the subdifferential definition in \Cref{eq:subdifferential-definition}, we get
\begin{align*}
  \cF\prt{\mu_1}-\cF\prt{\mu_0}
  &\ge
  \int_{\bR^d}
  \innerRd{\xi_0, \Toptmap{\mu_0}{\mu_1}-\id}
  \,\d\mu_0\prt{x}
  +
  \frac{\lambda}{2}\Wtwometric[\mu_0][\mu_1][2],\\
  \cF\prt{\mu_0}-\cF\prt{\mu_1}
  &\ge
  \int_{\bR^d}
  \innerRd{\xi_1, \Toptmap{\mu_1}{\mu_0}-\id}
  \,\d\mu_1\prt{x}
  +
  \frac{\lambda}{2}\Wtwometric[\mu_0][\mu_1][2].
\end{align*}
Then, changing variables in the second integral and summing the two endpoint inequalities yields the monotonicity of the
Wasserstein subdifferential:
\begin{equation}\label{eq:geodesic-subdifferential-monotonicity}
  \int_{\bR^d}
  \innerRd{
    \xi_1\circ\Toptmap{\mu_0}{\mu_1}-\xi_0,
    \Toptmap{\mu_0}{\mu_1}-\id
  }
  \,\d\mu_0\prt{x}
  \ge
  \lambda \Wtwometric[\mu_0][\mu_1][2].
\end{equation}

The same computation gives the monotonicity estimate for $\lambda$-convex functionals along
generalized geodesics.
Indeed, applying
\Cref{eq:generalized-geodesic-convexity} at $\mu_0$ and
$\mu_1$ and summing the two inequalities yields
\begin{equation}\label{eq:wasserstein-subdifferential-monotonicity}
  \int_{\bR^d}
  \innerRd{
    \xi_1\circ \Tmap{\omega}{\mu_1}\prt{x}
    -\xi_0\circ \Tmap{\omega}{\mu_0}\prt{x},
    \Tmap{\omega}{\mu_1}\prt{x}-\Tmap{\omega}{\mu_0}\prt{x}
  }
  \,\d\omega\prt{x}
  \ge
  \lambda
  \int_{\bR^d} \normRd{\Tmap{\omega}{\mu_1}\prt{x}-\Tmap{\omega}{\mu_0}\prt{x}}^2\,\d\omega\prt{x}
\end{equation}
for every pair of admissible transport maps $\Tmap{\omega}{\mu_0}$ and $\Tmap{\omega}{\mu_1}$.

We now explain how the $\lambda$-convexity of the energy functional impacts the
corresponding WGF. The first quantity to consider is the
metric derivative, recalled in \Cref{def:metric-derivative}, which measures the
instantaneous speed of a curve in $\Wtwospace[\bR^d]$.
\begin{definition}[Metric derivative]\label{def:metric-derivative}
  Let $\prt{\mu_t}_{t\in[0,t_f]}$ be a curve in
  $\prt{\Ptwospace[\bR^d],\Wtwometric}$. Its metric derivative, when it exists, is defined by
  \begin{align*}
    \abs{\mu'}\prt{t}
    \coloneqq
    \lim_{h\to0}
    \frac{\Wtwometric[\mu_{t+h}][\mu_t]}{\abs{h}}.
  \end{align*}
\end{definition}
We point out that, if $\prt{\mu_t}_{t\in[0,t_f]}$ is a WGF with
velocity $v_{\mu_t}=-\partial^\circ\cF\prt{\mu_t}$, then the following characterization identity holds (see Theorem~8.3.1 in \cite{AGS2008}):
\begin{align}\label{eq:metric-derivative-identity}
  \abs{\mu'}\prt{t}=\norm{v_{\mu_t}}_{\Ltwow{\mu_t}}
  \qquad\text{for a.e. }t\in[0,t_f].
\end{align}
Thus, the metric derivative is the natural way to measure the speed of the
flow. When $\cF$ is $\lambda$-convex, the associated Wasserstein gradient-flow
semigroup is $\lambda$-contractive (see Theorem~11.1.4 in \cite{AGS2008}),
which yields the exponential decay of this speed:
\begin{align}\label{eq:metric-derivative-decay}
  \abs{\mu'}\prt{t}
  \le
  e^{-\lambda\prt{t-s}}\abs{\mu'}\prt{s},
  \qquad 0\le s\le t\le t_f .
\end{align}

The same convexity assumption also gives a decay estimate at the level of the
energy. If $\lambda>0$ and $\cF$ admits a minimizer $\mu_\infty$, then the
energy gap decays exponentially along the flow (see Theorem~2.4.14 in
\cite{AGS2008}):
\begin{align}\label{eq:energy_decay}
  \cF\prt{\mu_t}-\cF\prt{\mu_\infty}
  \le
  e^{-2\lambda t}\prt{\cF\prt{\mu_0}-\cF\prt{\mu_\infty}},
  \qquad 0\leq t\leq t_f.
\end{align}

\section{Stiefel manifold}
\label{sec:stiefel-notions}
We denote by $\St\prt{n,L^2_{\mu_0}}$ the set of $n$-frames with components in $L^2_{\mu_0}$ that are orthonormal with respect to this inner product, namely
\begin{equation}
  \St\prt{n,L^2_{\mu_0}}
  \coloneqq
  \set[\big]{
    \stiefelframe{v}=\{v_i\}_{i=1}^n \in \prt{L^2_{\mu_0}}^n
    \cond
    \inner{v_i, v_j}_{L^2_{\mu_0}}=\delta_{i,j},\;\; 1\leq i,j\leq n
  }.
\end{equation}
The tangent space at $\stiefelframe{v} \in \St\prt{n,L^2_{\mu_0}}$ consists of all $n$-frames
$\stiefelframe{w}=\{w_i\}_{i=1}^n\in\prt{L^2_{\mu_0}}^n$ whose cross-Gram matrix with $\stiefelframe{v}$ is skew-symmetric, i.e.
\begin{equation}\label{eq:tangent_stiefel}
  \Tan_{\stiefelframe{v}} \St\prt{n,L^2_{\mu_0}}
  =
  \set[\big]{
    \stiefelframe{w}=\{w_i\}_{i=1}^n \in \prt{L^2_{\mu_0}}^n
    \cond
    \inner{w_i, v_j}_{L^2_{\mu_0}}+ \inner{v_i, w_j}_{L^2_{\mu_0}} =0,\;\; 1\leq i,j\leq n
  }.
\end{equation}
We endow $\St\prt{n,L^2_{\mu_0}}$ with the metric induced by the $L^2_{\mu_0}$ inner product, namely
\begin{equation}
  g_{\stiefelframe{v}}\prt{\stiefelframe{w},\tilde{\stiefelframe{w}}}
  \coloneqq
  \sum_{i=1}^n
  \inner{w_i, \tilde{w}_i}_{L^2_{\mu_0}},
  \qquad
  \forall \stiefelframe{w},\tilde{\stiefelframe{w}}\in \Tan_{\stiefelframe{v}} \St\prt{n,L^2_{\mu_0}}.
\end{equation}
We can further decompose the tangent space $\Tan_{\stiefelframe{v}} \St\prt{n,L^2_{\mu_0}}$ as
\begin{equation}
  \Tan_{\stiefelframe{v}} \St\prt{n,L^2_{\mu_0}}
  = \rH^\St_{\stiefelframe{v}} \oplus \rV^\St_{\stiefelframe{v}},
\end{equation}
where we distinguish between the horizontal subspace
\begin{align}\label{eq:horizontal_tangent_stiefel}
  \rH_{\stiefelframe{v}}^\St
  \coloneqq
  \set[\big]{
    \stiefelframe{w} \in \prt{L^2_{\mu_0}}^n
    \cond
    w_i \in \vspan\set{\stiefelframe{v}}^{\perp_{L^2_{\mu_0}}}, \; 1\leq i \leq n
  }
\end{align}
and its vertical subspace
\begin{align}\label{eq:vertical_tangent_stiefel}
  \rV_{\stiefelframe{v}}^\St
  \coloneqq
  \set[\big]{
    \stiefelframe{w} \in \prt{L^2_{\mu_0}}^n
    \cond
    w_i \in \vspan\set{\stiefelframe{v}} \text{ and }
    \inner{w_i, v_j}_{L^2_{\mu_0}} + \inner{w_j, v_i}_{L^2_{\mu_0}} = 0,\quad 1\leq i,j \leq n
  }.
\end{align}

To time-integrate the moving basis functions of our Stiefel decoder while preserving their geometric structure, we introduce the exponential map on $\St\prt{n,L^2_{\mu_0}}$ at a point $\stiefelframe{v}\in\St\prt{n,L^2_{\mu_0}}$.
It is defined as
\begin{align}
  \label{eq:exp-stiefel}
  \Exp_{\stiefelframe{v}}: \Tan_{\stiefelframe{v}} \St\prt{n,L^2_{\mu_0}} & \to \St\prt{n,L^2_{\mu_0}}, \\
  \stiefelframe{w}                                                       & \mapsto \gamma_{\stiefelframe{w}}\prt{1},
\end{align}
where $\gamma_{\stiefelframe{w}}: [0,1]\to \St\prt{n,L^2_{\mu_0}}$ is the constant speed geodesic with initial conditions $\gamma_{\stiefelframe{w}}\prt{0}=\stiefelframe{v}$ and velocity $\dot \gamma_{\stiefelframe{w}}\prt{0} = \stiefelframe{w}$.
Computing the exponential map is not a trivial operation in practice: given an element $\stiefelframe{w}\in\Tan_{\stiefelframe{v}}\St\prt{n,L^2_{\mu_0}}$, it holds that (see Proposition $1$ in \cite{harms2012geodesics})
\begin{align}
  \label{eq:exp-map}
  \Exp_{\stiefelframe{v}}\prt{\stiefelframe{w}}
  =
  \begin{bmatrix}
    v_1 & \cdots & v_n & w_1 & \cdots & w_n
  \end{bmatrix}
  \exp\prt[\Bigg]{
    \begin{bmatrix}
      G_{\stiefelframe{v}, \stiefelframe{w}} & -G_{\stiefelframe{w}, \stiefelframe{w}} \\
      \Id_n                                 & G_{\stiefelframe{v}, \stiefelframe{w}}
    \end{bmatrix}
  }
  \begin{bmatrix}
    \Id_n \\
    0_n
  \end{bmatrix}
  e^{-G_{\stiefelframe{v}, \stiefelframe{w}}},
\end{align}
where $G_{\stiefelframe{v}, \stiefelframe{w}}, G_{\stiefelframe{w}, \stiefelframe{w}} \in \bR^{n\times n}$ are defined as
\begin{align*}
  G_{\stiefelframe{v}, \stiefelframe{w},ij}
  =
  \inner{v_i, w_j}_{L^2_{\mu_0}},
  \quad \text{and}\quad
  G_{\stiefelframe{w}, \stiefelframe{w},ij}
  =
  \inner{w_i, w_j}_{L^2_{\mu_0}},
  \quad 1\leq i,j\leq n.
\end{align*}
In practice, for general vectors $\stiefelframe{w}\in\Tan_{\stiefelframe{v}}\St\prt{n,L^2_{\mu_0}}$, the exponential of the block matrix and of the $G_{\stiefelframe{v}, \stiefelframe{w}}$ matrix are usually numerically approximated using a power series expansion truncated at some order.
However, if we consider a horizontal frame $\stiefelframe{w}\in\rH_{\stiefelframe{v}}^\St$ with pairwise orthogonal components, then $G_{\stiefelframe{v}, \stiefelframe{w}}=0_n$ and $G_{\stiefelframe{w}, \stiefelframe{w}}$ becomes diagonal.
Thus, the exponential map reduces to the following componentwise closed formula for each $1\leq i\leq n$:
\begin{align}\label{eq:exp_closed_formula}
  \Exp_{\stiefelframe{v}}\prt{\stiefelframe{w}}_i
  =
  \begin{cases}
    \cos\prt{\norm{w_i}_{L^2_{\mu_0}}}v_i+\dfrac{\sin\prt{\norm{w_i}_{L^2_{\mu_0}}}}{\norm{w_i}_{L^2_{\mu_0}}}w_i,
     & \norm{w_i}_{L^2_{\mu_0}}\neq 0, \\
    v_i,
     & \norm{w_i}_{L^2_{\mu_0}}= 0.
  \end{cases}
\end{align}

Finally, we introduce the finite-dimensional Stiefel submanifold associated with a finite-dimensional linear subspace $\bW_m\subset L^2_{\mu_0}$ as follows:
\begin{equation}
  \St\prt{n,\bW_m;L^2_{\mu_0}}
  \coloneqq
  \set[\big]{
    \stiefelframe{v}=\{v_i\}_{i=1}^n\in \bW_m
    \cond
    \inner{v_i,v_j}_{L^2_{\mu_0}}=\delta_{i,j},
    \;\; 1\leq i,j\leq n
  }
  \subset \St\prt{n,L^2_{\mu_0}}.
\end{equation}
The metric is the one induced by the $L^2_{\mu_0}$ inner product, while the tangent space, its horizontal--vertical decomposition, and the exponential map are obtained by restricting the corresponding objects defined above on $\St\prt{n,L^2_{\mu_0}}$ to frames and tangent vectors with components in $\bW_m$.
In particular, for every $\stiefelframe{v}\in\St\prt{n,\bW_m;L^2_{\mu_0}}$, the horizontal tangent directions are taken in
$\bL_{\stiefelframe{v}}\coloneqq\bW_m\cap \vspan\set{\stiefelframe{v}}^{\perp_{L^2_{\mu_0}}}$,
which is now a finite-dimensional space of dimension $k=m-n$.

\section{Proof of \Cref{thm:params-evolution}}
\label{sec:params-evolution}


We momentarily freeze time and consider the abstract least-squares problem underlying \Cref{eq:least-squares_g}.
Let $\bW_m\subset L^2_{\mu_0}$ be a finite-dimensional linear background space with $\dim\prt{\bW_m}=m\geq n$, let $\bc\in\bR^n$, and $\stiefelframe{v}\in\St\prt{n,\bW_m;L^2_{\mu_0}}$.
Set $\theta=\prt{\bc,\stiefelframe{v}}$, and let $\xi\in L^2_{\mu_0}$ be a target element.
Then the abstract minimization problem is:
\begin{equation}
  \label{eq:min-pb-generic}
  \min_{\substack{\prt{\bh, \stiefelframe{w}} \,\in\, \\ \Tan_\theta \Theta =\bR^n\times \Tan_{\stiefelframe{v}}\St\prt{n,\bW_m;L^2_{\mu_0}}}} \cJ\prt{\bh, \stiefelframe{w}}, \quad \text{with}\quad \cJ\prt{\bh, \stiefelframe{w}}\coloneqq
  \frac 1 2 \norm[\Big]{\sum_{i=1}^n c_iw_i+h_iv_i - \xi}_{L^2_{\mu_0}}^2.
\end{equation}
We proceed in two steps.
First, in \Cref{lem:horizontal-suffices}, we show that \Cref{eq:least-squares_g} is equivalent to the problem obtained by restricting $\stiefelframe{w}$ to the horizontal tangent space $\rH_{\stiefelframe{v}}^{\St}\subset \Tan_{\stiefelframe{v}}\St\prt{n,\bW_m;L^2_{\mu_0}}$.
Then, in \Cref{lem:horizontal_characterization} we characterize the minimizers of this equivalent problem.
\begin{lemma}[Horizontal vectors are minimizers]
  \label{lem:horizontal-suffices}
  For any $\theta=\prt{\bc, \stiefelframe{v}} \in \Theta$, and any $\xi\in L^2_{\mu_0}$,
  \begin{align}
    \label{eq:horizontal-vecs-are-minimizers}
    \min_{\substack{\prt{\bh, \stiefelframe{w}} \,\in \bR^n\times \Tan_{\stiefelframe{v}}\St\prt{n,\bW_m;L^2_{\mu_0}}}} \cJ\prt{\bh,\stiefelframe{w}}
    =
    \min_{\substack{\prt{\bm{\tilde{h}}, \tilde{\stiefelframe{w}}} \,\in \bR^n\times \rH_{\stiefelframe{v}}^{\St}}} \cJ\prt{\tilde \bh, \tilde{\stiefelframe{w}}}.
  \end{align}
\end{lemma}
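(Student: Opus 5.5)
The plan is to show that every value attained by $\cJ$ on the full tangent space is also attained on the horizontal subspace; the reverse inequality is free. Since $\rH_{\stiefelframe{v}}^{\St}\subset\Tan_{\stiefelframe{v}}\St\prt{n,\bW_m;L^2_{\mu_0}}$, the right-hand minimum in \Cref{eq:horizontal-vecs-are-minimizers} is at least the left-hand one. For the other direction, I will take an arbitrary $\prt{\bh,\stiefelframe{w}}$ in the full tangent space and build $\prt{\tilde\bh,\tilde{\stiefelframe{w}}}\in\bR^n\times\rH_{\stiefelframe{v}}^{\St}$ with the same decoder velocity
\[
\sum_i c_i\tilde w_i+\tilde h_i v_i=\sum_i c_iw_i+h_iv_i .
\]
Then $\cJ\prt{\tilde\bh,\tilde{\stiefelframe{w}}}=\cJ\prt{\bh,\stiefelframe{w}}$, and the two infima coincide.

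For the construction I use the horizontal--vertical splitting from \Cref{sec:stiefel-notions}, restricted to $\bW_m$. Since $\bW_m$ is finite-dimensional and contains every $v_j$, each $w_i\in\bW_m$ decomposes as
\[
w_i=\sum_j \Omega_{ji}v_j+w_i^{\perp},
\]
where $\Omega_{ji}=\inner{w_i,v_j}_{L^2_{\mu_0}}$ and $w_i^\perp\in\bL_{\stiefelframe{v}}=\bW_m\cap\vspan\set{\stiefelframe{v}}^{\perp_{L^2_{\mu_0}}}$. The tangency condition \Cref{eq:tangent_stiefel} says precisely that $\Omega$ is skew-symmetric. Set $\tilde w_i\coloneqq w_i^\perp$, which belongs to $\rH_{\stiefelframe{v}}^{\St}$ because every horizontal frame automatically satisfies the skew condition with zero cross-Gram matrix. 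Then
\[
\sum_i c_iw_i=\sum_i c_i\tilde w_i+\sum_j\prt{\Omega\bc}_jv_j ,
\]
so choosing $\tilde\bh\coloneqq\bh+\Omega\bc$ absorbs the vertical contribution into the coefficient velocity. This gives equality of the two objective values.

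Existence of the minima is straightforward and I will address it briefly. On the horizontal side, the map $\prt{\tilde\bh,\tilde{\stiefelframe{w}}}\mapsto\sum_i c_i\tilde w_i+\tilde h_iv_i$ is linear between finite-dimensional spaces. Its image is therefore a closed subspace, and $\cJ$ is the squared distance from $\xi$ to that image, so the minimum is attained by orthogonal projection. The only point needing care, and the one I expect to be the main (mild) obstacle, is checking that the vertical part of a general tangent vector stays inside $\vspan\set{\stiefelframe{v}}$. This holds because $\stiefelframe{v}\subset\bW_m$, so the projection of $w_i$ onto $\vspan\set{\stiefelframe{v}}^{\perp}$ remains in $\bW_m$, and this makes the horizontal component admissible.
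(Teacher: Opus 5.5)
Your proof is correct and is essentially the paper's argument. You split each $w_i$ into its part in $\vspan\{\stiefelframe{v}\}$ and its part in $\bL_{\stiefelframe{v}}$, keep the latter as the horizontal frame, and absorb the former into the coefficient velocity via $\tilde{\bh}=\bh+\Omega\bc$; this is exactly the paper's $\tilde h_i=h_i^{\opt}+\sum_{j}a_{ji}c_j$. The only cosmetic difference is that you apply this reduction to an arbitrary tangent vector and then argue attainment separately, whereas the paper applies it directly to a minimizer whose existence it takes from finite dimensionality; in your version the full-space minimum is attained at your horizontal minimizer.
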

\begin{proof}
  Let $\theta=\prt{\bc, \stiefelframe{v}} \in \Theta$. Since $\bR^n \times \rH_{\stiefelframe{v}}^{\St} \subset \Tan_\theta \Theta$, it holds that
  \begin{align*}
    \min_{\substack{\prt{\bh, \stiefelframe{w}} \,\in\bR^n\times \Tan_{\stiefelframe{v}}\St\prt{n,\bW_m;L^2_{\mu_0}}}} \cJ\prt{\bh, \stiefelframe{w}}
    \leq
    \min_{\substack{\prt{\bm{\tilde{h}}, \tilde{\stiefelframe{w}}} \,\in\bR^n\times \rH_{\stiefelframe{v}}^{\St}}}
    \cJ\prt{\tilde \bh, \tilde{\stiefelframe{w}}}.
  \end{align*}
  We next show that equality holds. The minimum is attained since this is a finite-dimensional least-squares problem. Let $\prt{\bh^\opt,\stiefelframe{w}^\opt} \in \Tan_\theta \Theta$ be a minimizer of the left-hand side problem. We can uniquely decompose $\stiefelframe{w}^\opt$ as $\stiefelframe{w}^\opt=\stiefelframe{w}^{\rH}+\stiefelframe{w}^{\rV}$ with $\stiefelframe{w}^{\rH}\in\rH_{\stiefelframe{v}}^{\St}$ and $\stiefelframe{w}^{\rV}\in\rV_{\stiefelframe{v}}^{\St}$. Since $w_i^{\rV} \in \vspan\set{\stiefelframe{v}}$, the vertical component can be written as $w_i^{\rV}=\sum_{j=1}^n a_{ij}v_j$ with $a_{ij}=-a_{ji}=\inner{w_i^{\rV}, v_j}_{L^2_{\mu_0}}$ for all $1\leq i\leq n$. Consequently, we can write
  \begin{align}
    \min_{\substack{\prt{\bh, \stiefelframe{w}} \,\in\bR^n\times \Tan_{\stiefelframe{v}}\St\prt{n,\bW_m;L^2_{\mu_0}}}} \cJ\prt{\bh, \stiefelframe{w}}
     & = \cJ\prt{\bh^\opt,\stiefelframe{w}^\opt}                                                                             \\
     & = \frac 1 2 \norm[\Big]{\sum_{i=1}^n c_i \prt[\big]{w^{\rH}_i+w_i^{\rV}}+h_i^\opt v_i-\xi}_{L^2_{\mu_0}}^2                          \\
     & =\frac 1 2 \norm[\Big]{\sum_{i=1}^n c_iw_i^{\rH}+v_i\prt[\Big]{h_i^\opt+\sum_{j=1}^n a_{ji}c_j}-\xi}_{L^2_{\mu_0}}^2 \\
     & = \cJ\prt{\tilde \bh, \stiefelframe{w}^{\rH}} \quad \text{with }\tilde{h}_i:=h_i^\opt+\sum_{j=1}^n a_{ji}c_j                   \\
     & \geq \min_{\prt{\bm{\tilde{h}}, \tilde{\stiefelframe{w}}} \,\in\, \bR^n\times \rH_{\stiefelframe{v}}^{\St}}
    \cJ\prt{\tilde \bh, \tilde{\stiefelframe{w}}}.
  \end{align}
  This yields the equality in \Cref{eq:horizontal-vecs-are-minimizers} on the minimal value of both problems. From the proof, it also follows that if $\prt{\bh^\opt,\stiefelframe{w}^\opt} \in \Tan_\theta \Theta$ is a minimizer of the left-hand side problem with $\stiefelframe{w}^\opt=\stiefelframe{w}^{\rH}+\stiefelframe{w}^{\rV}$, then $\prt{\tilde \bh, \stiefelframe{w}^{\rH}}$ is a minimizer of the right-hand side problem.
\end{proof}

\begin{lemma}[Characterization of the horizontal minimizers]
  \label{lem:horizontal_characterization}
  For any $\theta=\prt{\bc, \stiefelframe{v}}\in \Theta$ and $\xi\in L^2_{\mu_0}$, the minimizers $\prt{\bh^*, \stiefelframe{w}^*}\in \bR^n\times \rH_{\stiefelframe{v}}^{\St}$ of problem
  \begin{equation}
    \label{eq:min-pb-horizontal}
    \min_{\substack{\prt{\bm{h}, \stiefelframe{w}} \,\in \bR^n\times \rH_{\stiefelframe{v}}^{\St}}} \cJ\prt{\bh, \stiefelframe{w}}
  \end{equation}
  are of the form $h_i^* = \inner{ \xi, v_i}_{L^2_{\mu_0}}$ and $w_i^* =\sum_{j=1}^{k} a^*_{i,j} \psi_j$, with $i\in\{1,\dots,n\}$ and $k=m-n$.
  Moreover, $V_n\coloneqq\vspan\set{\stiefelframe{v}}$, $\{\psi_i\}_{i=1}^{k}$ is an $L^2_{\mu_0}$-orthonormal basis of $\bL_{\stiefelframe{v}}$, and $A^*=\{a_{i,j}^*\}_{\substack{1\leq i\leq n \\ 1\leq j\leq k}} \in \bR^{n\times k}$ is a minimizer of
  \begin{equation}
    \label{eq:optim-stiefel-Rnk}
    A^* \in \argmin_{A\in \bR^{n\times k}} \; \abs{A^\top \bc - \bb}_{\bR^k}^2 
  \end{equation}
  with $\bb \coloneqq \{\inner{\proj{V_n^\perp}{L^2_{\mu_0}}\prt{\xi}, \psi_i}_{L^2_{\mu_0}}\}_{1\leq i \leq k}$.
\end{lemma}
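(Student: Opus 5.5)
The plan is to reduce the horizontal problem \Cref{eq:min-pb-horizontal} to two decoupled least-squares problems by exploiting the orthogonal splitting $L^2_{\mu_0} = V_n \oplus \bL_{\stiefelframe{v}} \oplus \bW_m^{\perp}$. For $(\bh,\stiefelframe{w})\in\bR^n\times\rH_{\stiefelframe{v}}^{\St}$, each $w_i$ lies in $\bW_m\cap V_n^\perp=\bL_{\stiefelframe{v}}$. Since $\bL_{\stiefelframe{v}}$ has dimension $k=m-n$ (the orthogonal complement of $V_n$ inside $\bW_m$), we can expand $w_i=\sum_{j=1}^k a_{ij}\psi_j$ with an arbitrary matrix $A\in\bR^{n\times k}$. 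Conversely, every such $A$ produces a horizontal frame. This is because horizontal vectors automatically satisfy the skew-symmetry condition in \Cref{eq:tangent_stiefel}: both cross inner products $\inner{w_i,v_j}$ and $\inner{v_i,w_j}$ vanish. Hence the problem is equivalent to minimizing over $(\bh,A)\in\bR^n\times\bR^{n\times k}$.

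Next, I decompose $\xi=\proj{V_n}{}\xi+\proj{\bL_{\stiefelframe{v}}}{}\xi+\proj{\bW_m^\perp}{}\xi$ and apply Pythagoras to
\begin{align*}
\sum_i h_i v_i+\sum_i c_i w_i-\xi .
\end{align*}
The term $\sum_i h_iv_i$ lies in $V_n$, and $\sum_i c_i w_i=\sum_j (A^\top\bc)_j\psi_j$ lies in $\bL_{\stiefelframe{v}}$. Therefore
\begin{align*}
2\cJ(\bh,\stiefelframe{w}) = \abs{\bh-(\inner{\xi,v_i})_{i}}^2 + \abs{A^\top\bc - \bb}^2 + \norm{\proj{\bW_m^\perp}{}\xi}^2_{L^2_{\mu_0}} ,
\end{align*}
where I used orthonormality of $\{v_i\}$ and $\{\psi_j\}$. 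Here $\bb_j=\inner{\xi,\psi_j}$, which equals $\inner{\proj{V_n^\perp}{L^2_{\mu_0}}\xi,\psi_j}$ because $\psi_j\perp V_n$. This matches the definition of $\bb$ in the statement.

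The two blocks are decoupled, and the last term is constant. So $(\bh^*,A^*)$ is a minimizer if and only if $h^*_i=\inner{\xi,v_i}$ and $A^*$ minimizes $\abs{A^\top\bc-\bb}^2$, which gives exactly \Cref{eq:optim-stiefel-Rnk}.

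I do not expect a genuine obstacle here. The only care needed is the dimension count $\dim\bL_{\stiefelframe{v}}=m-n$, which uses $V_n\subset\bW_m$, together with the identification of horizontal tangent vectors with arbitrary $n\times k$ coefficient matrices. One could also record, for later use in \Cref{thm:params-evolution}, the explicit minimizers of \Cref{eq:optim-stiefel-Rnk}. If $\bc=0$, every $A$ is optimal. If $\bc\neq0$, the minimum value is $0$, and the solutions are $A=\bc\bb^\top/\abs{\bc}^2+Z$ with $Z^\top\bc=0$.
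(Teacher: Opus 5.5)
Your proposal is correct and follows essentially the same route as the paper. An orthogonal splitting decouples the $\bh$-block, which gives $h_i^*=\inner{\xi,v_i}_{L^2_{\mu_0}}$, from the frame block; after expanding $w_i=\sum_j a_{ij}\psi_j$, the frame block reduces to $\abs{A^\top\bc-\bb}^2$ plus a constant. The only cosmetic difference is that you split $L^2_{\mu_0}=V_n\oplus\bL_{\stiefelframe{v}}\oplus\bW_m^{\perp}$ in one step. The paper instead first separates $V_n$ from $V_n^\perp$ and writes the constant as $\norm{(\id-\proj{\bL_{\stiefelframe{v}}}{L^2_{\mu_0}})\proj{V_n^\perp}{L^2_{\mu_0}}(\xi)}^2_{L^2_{\mu_0}}$, which is exactly your $\norm{\proj{\bW_m^\perp}{L^2_{\mu_0}}(\xi)}^2_{L^2_{\mu_0}}$.
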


\begin{proof}
  By \Cref{lem:horizontal-suffices}, we search for minimizers $\prt{\bh^*, \stiefelframe{w}^*}\in \bR^n \times \rH_{\stiefelframe{v}}^{\St}$. Let $\prt{\bh, \stiefelframe{w}}\in \bR^n \times \rH_{\stiefelframe{v}}^{\St}$, set $V_n\coloneqq\vspan\set{\stiefelframe{v}}$, and observe that, by orthogonality, we can decompose the objective function as
  \begin{align}
    \norm[\Big]{\sum_{i=1}^n c_iw_i+h_iv_i - \xi}_{L^2_{\mu_0}}^2
     & = \norm[\Big]{\sum_{i=1}^n c_i w_i - \proj{V_n^\perp}{L^2_{\mu_0}}\prt{\xi}}_{L^2_{\mu_0}}^2
    +  \norm[\Big]{\sum_{i=1}^n h_i v_i - \proj{V_n}{L^2_{\mu_0}}\prt{\xi}}_{L^2_{\mu_0}}^2.
  \end{align}
  This yields a decoupling of the minimization problem as
  \begin{align}
    &\min_{\prt{\bh, \stiefelframe{w}}\in \bR^n \times \rH_{\stiefelframe{v}}^{\St}}
    \norm[\Big]{\sum_{i=1}^n c_iw_i+h_iv_i - \xi}_{L^2_{\mu_0}}^2
    \notag\\
    &\quad =
    \min_{\stiefelframe{w} \in \rH_{\stiefelframe{v}}^{\St}}
    \norm[\Big]{\sum_{i=1}^n c_i w_i - \proj{V_n^\perp}{L^2_{\mu_0}}\prt{\xi}}_{L^2_{\mu_0}}^2
    +
    \min_{\bh\in \bR^n}
    \norm[\Big]{\sum_{i=1}^n h_i v_i - \proj{V_n}{L^2_{\mu_0}}\prt{\xi}}_{L^2_{\mu_0}}^2.
  \end{align}
  The unique minimizer to the second optimization problem is $\bh^*$ such that $\proj{V_n}{L^2_{\mu_0}}\prt{\xi} = \sum_{i=1}^n h_i^* v_i$, which yields the stated formula for $\bh^*$.

  We now turn to the optimization problem over $\stiefelframe{w}$, and start by observing that since $\stiefelframe{w} \in \rH_{\stiefelframe{v}}^{\St}$, then we can express its components as $w_i = \sum_{j=1}^k a_{i,j} \psi_j \in \bL_{\stiefelframe{v}}$ for coefficients $a_{i,j}$ which we gather in a matrix $A\in \bR^{n\times k}$. It follows that the loss function of the corresponding minimization problem can be written as
  \begin{equation}
    \norm[\Big]{\sum_{i=1}^n c_i w_i - \proj{V_n^\perp}{L^2_{\mu_0}}\prt{\xi}}_{L^2_{\mu_0}}^2 = \abs{ A^\top \bc - \bb}_{\bR^k}^2 + \norm[\big]{\prt[\big]{\id - \proj{\bL_{\stiefelframe{v}}}{L^2_{\mu_0}}}\prt{\proj{V_n^\perp}{L^2_{\mu_0}}\prt{\xi}}}_{L^2_{\mu_0}}^2
  \end{equation}
  with $\bb$ defined in the statement of the lemma. Therefore
  \begin{align}
    \label{eq:loss-intermediate}
    \min_{\stiefelframe{w} \in \rH_{\stiefelframe{v}}^{\St}}  \norm[\Big]{\sum_{i=1}^n c_i w_i - \proj{V_n^\perp}{L^2_{\mu_0}}\prt{\xi}}_{L^2_{\mu_0}}^2
    =
    \norm[\big]{\prt[\big]{\id - \proj{\bL_{\stiefelframe{v}}}{L^2_{\mu_0}}}\prt{\proj{V_n^\perp}{L^2_{\mu_0}}\prt{\xi}}}_{L^2_{\mu_0}}^2
    +
    \min_{A\in \bR^{n\times k}} \abs{ A^\top \bc - \bb}_{\bR^k}^2.
  \end{align}
  The set of minimizers of the right-hand side problem is
  \begin{equation}
    \label{eq:A-star-abstract}
    A^* =
    \begin{cases}
       \text{any }A \in \bR^{n\times k}, & \text{if }\bc=0,                                                   \\
       \frac{1}{\abs{\bc}^2} \bc \bb^\top + Z, \text{ with }Z \in \bR^{n\times k}\st Z^\top \bc =0, & \text{if }\bc\neq 0,
    \end{cases}
  \end{equation}
  and we observe that the minimum of \Cref{eq:loss-intermediate} is equal to $\norm[\big]{\prt[\big]{\id - \proj{\bL_{\stiefelframe{v}}}{L^2_{\mu_0}}}\prt{\proj{V_n^\perp}{L^2_{\mu_0}}\prt{\xi}}}_{L^2_{\mu_0}}^2$ if $\bc\neq 0$ and to $\abs{\bb}^2+\norm[\big]{\prt[\big]{\id - \proj{\bL_{\stiefelframe{v}}}{L^2_{\mu_0}}}\prt{\proj{V_n^\perp}{L^2_{\mu_0}}\prt{\xi}}}_{L^2_{\mu_0}}^2 = \norm{\proj{V_n^\perp}{L^2_{\mu_0}}\prt{\xi}}_{L^2_{\mu_0}}^2$ if $\bc=0$.
  For one of the admissible $A^*$, we assemble the corresponding $\stiefelframe{w}^*$, which yields the stated formula for $\stiefelframe{w}^*$.
\end{proof}

Then, the proof of \Cref{thm:params-evolution} follows naturally by applying the previous lemmas to the least-squares problem
\Cref{eq:least-squares_g}. Indeed, fixing a time $t$, it is obtained from our abstract form in \Cref{eq:min-pb-generic} by considering
$\theta=\prt{\bc_t,\stiefelframe{v}_t}$ and $\xi=v_{\nu_t}\circ\dec_t$.
Thus, \Cref{lem:horizontal-suffices} and
\Cref{lem:horizontal_characterization}  prove
the parameter evolution stated in \Cref{thm:params-evolution}.

\section{Auxiliary results}

\begin{lemma}[Two-curve derivative estimate]\label{lemma:dt_W2}
  Let $\mu_t,\nu_t\in AC\prt{[0,t_f],\Ptwospace[\bR^d]}$ solve the continuity equations
  with velocity fields $v_{\mu_t}$ and $w_{\nu_t}$, respectively. Assume moreover that
  $\mu_t$ and $\nu_t$ are absolutely continuous with respect to $\Leb{d}$ for
  a.e.\ $t\in[0,t_f]$.
  Then, for $\mathcal L^1$-a.e.\ $t\in[0,t_f]$,
  \begin{align}\label{eq:two-curve-ineq}
    \frac{\d}{\dt}\frac12 \Wtwometric[\mu_t][\nu_t][2]
    \leq
    \int_{\bR^d}
    \inner[\big]{
      v_{\mu_t}\prt{x}-w_{\nu_t}\circ\Toptmap{\mu_t}{\nu_t}\prt{x},
      x-\Toptmap{\mu_t}{\nu_t}\prt{x}
    }
    \d\mu_t\prt{x}.
  \end{align}
\end{lemma}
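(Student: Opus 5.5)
This is the standard result of Theorem 8.4.7 and Theorem 23.9 in the literature (AGS, Villani). The plan is to derive it from the one-curve derivative formula for the squared distance to a moving target, via a doubling-of-variables argument.

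First I would establish that $t\mapsto \Wtwometric[\mu_t][\nu_t][2]$ is absolutely continuous. By the triangle inequality, $\abs{\Wtwometric[\mu_t][\nu_t]-\Wtwometric[\mu_s][\nu_s]}\le \Wtwometric[\mu_t][\mu_s]+\Wtwometric[\nu_t][\nu_s]$, and both curves are absolutely continuous. The square is locally Lipschitz on bounded sets, so it is differentiable for $\mathcal L^1$-a.e.\ $t$. Fix such a $t$ at which, in addition, both curves admit their tangent velocity fields in the sense of Theorem 8.3.1/Proposition 8.4.6 of \cite{AGS2008}. Concretely, this means $\Wtwometric[\mu_{t+h}][\pushfwd{(\id+h v_{\mu_t})}{\mu_t}]=o(h)$, and analogously for $\nu$ with $w_{\nu_t}$. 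Since $\mu_t\ll\Leb{d}$, Brenier's theorem provides the optimal map $T\coloneqq\Toptmap{\mu_t}{\nu_t}$.

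Next comes the key one-sided comparison. For $h>0$, I would build an admissible (not necessarily optimal) coupling between $\pushfwd{(\id+hv_{\mu_t})}{\mu_t}$ and $\pushfwd{(\id+hw_{\nu_t})}{\nu_t}$. It is the pushforward of $\mu_t$ under $x\mapsto (x+hv_{\mu_t}(x),\,T(x)+hw_{\nu_t}(T(x)))$. Then
\begin{align*}
\Wtwometric[\mu_{t+h}][\nu_{t+h}][2]\le \int_{\bR^d}\normRd{x-T(x)+h\prt{v_{\mu_t}(x)-w_{\nu_t}\circ T(x)}}^2\d\mu_t(x)+o(h),
\end{align*}
where the $o(h)$ error follows from the tangent-approximation property, the triangle inequality, and boundedness of $\Wtwometric[\mu_t][\nu_t]$. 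Expanding the square, the zeroth-order term is exactly $\Wtwometric[\mu_t][\nu_t][2]$ by optimality of $T$. The first-order term is $2h\int\innerRd{v_{\mu_t}-w_{\nu_t}\circ T,\,x-T(x)}\d\mu_t$. The quadratic term is $O(h^2)$ because $v_{\mu_t}\in L^2_{\mu_t}$ and $w_{\nu_t}\circ T\in L^2_{\mu_t}$, the latter since $\pushfwd{T}{\mu_t}=\nu_t$. Dividing by $2h$ and letting $h\downarrow0$ yields the right derivative bound. At points of differentiability this equals the derivative, which gives \Cref{eq:two-curve-ineq}.

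The main obstacle is the tangent-approximation step, namely $\Wtwometric[\mu_{t+h}][\pushfwd{(\id+hv_{\mu_t})}{\mu_t}]=o(h)$. This is true for a.e.\ $t$ only when the velocity is the minimal-norm (tangent) one, which is Proposition 8.4.6 of \cite{AGS2008}. In our applications $v_{\mu_t}$ is a gradient and $\tilde v_{\nu_t}$ is a projection. If the latter is not in $\Tan^{\vel}_{\nu_t}$, I would instead invoke Theorem 8.4.7 of \cite{AGS2008} directly. That theorem gives the derivative formula $\frac{\d}{\dt}\frac12\Wtwometric[\mu_t][\sigma][2]=\int\innerRd{v_{\mu_t},x-T}\d\mu_t$ for any continuity-equation velocity, since the non-tangent part is orthogonal to gradients and the optimal displacement $x-T(x)$ is a tangent vector. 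I would then use the doubling argument in $s,t$ to combine the two one-curve derivatives, again using only absolute continuity of $\mu_t$, which guarantees the optimal map exists.
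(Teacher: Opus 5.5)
Your argument is correct, but its core differs from the paper's. The paper freezes one curve at a time and applies the one-curve formula $\frac{\d}{\dt}\frac12 W_2^2(\mu_t,\sigma)=\int\langle v_{\mu_t},x-\Toptmap{\mu_t}{\sigma}\rangle\,\d\mu_t$ of \cite{AGS2008} to each curve. It then glues the two partial derivatives with the doubling-of-variables Lemma 4.3.4 and changes variables through $\Toptmap{\nu_t}{\mu_t}$. You instead fix a single good time. You bound the right difference quotient of $\frac12W_2^2(\mu_t,\nu_t)$ with one explicit coupling of $\pushfwd{(\id+hv_{\mu_t})}{\mu_t}$ and $\pushfwd{(\id+hw_{\nu_t})}{\nu_t}$, built from $T=\Toptmap{\mu_t}{\nu_t}$.

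The gain is that your good times do not depend on any reference measure. The paper, by contrast, evaluates its formula at the moving target $\nu_t$, although (as the paper itself states it) the formula holds for a.e.\ $t$ only for a \emph{fixed} target. That step needs extra justification, which your coupling avoids.

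The price is that $W_2\bigl(\mu_{t+h},\pushfwd{(\id+hv_{\mu_t})}{\mu_t}\bigr)=o(h)$ is only guaranteed for tangent (minimal-norm) velocities. You do not need to fall back on the paper's route for general fields; the orthogonality you already noticed keeps you inside your argument. Replace $v_{\mu_t}$ and $w_{\nu_t}$ by their $L^2$-projections onto $\Tan^{\vel}_{\mu_t}$ and $\Tan^{\vel}_{\nu_t}$. These projections solve the same continuity equations, since the discarded parts are divergence-free, and have smaller norms. The right-hand side does not change, because $x-T(x)\in\Tan^{\vel}_{\mu_t}$ and $\int\langle w_{\nu_t}\circ T,x-T\rangle\,\d\mu_t=\int\langle w_{\nu_t}(y),\Toptmap{\nu_t}{\mu_t}(y)-y\rangle\,\d\nu_t(y)$ with $\Toptmap{\nu_t}{\mu_t}-\id\in\Tan^{\vel}_{\nu_t}$ (optimal displacements are tangent, Section 8.5 of \cite{AGS2008}).
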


\begin{proof}
  By \cite[Proposition 8.4.7]{AGS2008}, since $\mu_t$ is an absolutely
  continuous curve with velocity $v_{\mu_t}$, for every fixed
  $\nu\in\Ptwospace[\bR^d]$ it holds for $\mathcal L^1$-a.e.\ $t\in[0,t_f]$
  that
  \begin{align}\label{eq:dt-W2-first-curve}
    \frac{\d}{\dt}\frac12 \Wtwometric[\mu_t][\nu][2]
    =
    \int_{\bR^d}
    \inner{v_{\mu_t}\prt{x},x-\Toptmap{\mu_t}{\nu}\prt{x}}
    \d\mu_t\prt{x}.
  \end{align}
  Similarly, for every fixed $\mu\in\Ptwospace[\bR^d]$,
  \begin{align}\label{eq:dt-W2-second-curve}
    \frac{\d}{\dt}\frac12 \Wtwometric[\mu][\nu_t][2]
    =
    \int_{\bR^d}
    \inner{w_{\nu_t}\prt{y},y-\Toptmap{\nu_t}{\mu}\prt{y}}
    \d\nu_t\prt{y}.
  \end{align}
  Since $\prt{s,t}\mapsto \Wtwometric[\mu_s][\nu_t][2]$ is absolutely continuous in each
  variable, \cite[Lemma 4.3.4]{AGS2008} gives, for $\mathcal L^1$-a.e.\
  $t\in[0,t_f]$,
  \begin{align*}
    \frac{\d}{\dt}\frac12 \Wtwometric[\mu_t][\nu_t][2]
    \leq
    \frac{\d}{\ds}\frac12 \Wtwometric[\mu_s][\nu_t][2]\bigg|_{s=t}
    +
    \frac{\d}{\ds}\frac12 \Wtwometric[\mu_t][\nu_s][2]\bigg|_{s=t}.
  \end{align*}
  Applying \Cref{eq:dt-W2-first-curve} with $\nu=\nu_t$ and
  \Cref{eq:dt-W2-second-curve} with $\mu=\mu_t$, and changing variables in
  the second integral through the optimal map $\Toptmap{\mu_t}{\nu_t}$, yields
  \Cref{eq:two-curve-ineq}.
\end{proof}

\begin{lemma}[Projected finite-dimensional dynamics is absolutely continuous]
  \label{lemma:finite-dimensional-AC}
  Let $\bW$ be a finite-dimensional
  normed linear space with basis $\{\psi_i\}_{i=1}^m$, continuously embedded in
  $\ambientspace$.
  Let $\bar\dec\in\bW$, and let $\cL:\bW\to\ambientspace$ be continuous.
  Let $\dec_t\in\bW$ solve for $t\in[\bar{t},t_f]$ the
  projected dynamics
  \begin{align*}
    \dot{\dec}_t
    =
    \proj{\bW}{\ambientspace}\prt{\cL\prt{\dec_t}},
    \qquad
    \dec_{\bar t}=\bar\dec,
  \end{align*}
  then $\dec\in AC\prt{[\bar t,t_f];\bW}$.
\end{lemma}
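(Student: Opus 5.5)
Since $\bW$ is finite-dimensional, the plan is to reduce the projected dynamics to an ODE in $\bR^m$ for the coordinates of $\dec_t$ in the basis $\{\psi_i\}_{i=1}^m$, and then read off absolute continuity from the regularity of that ODE. Writing $\dec_t=\sum_{i=1}^m\alpha_i(t)\psi_i$, let $G\in\bR^{m\times m}$ be the Gram matrix $G_{ij}=\inner{\psi_i,\psi_j}_{\ambientspace}$. It is symmetric positive definite, because the $\psi_i$ are linearly independent and the embedding $\bW\hookrightarrow\ambientspace$ is injective. The orthogonal projection is then $\proj{\bW}{\ambientspace}(z)=\sum_{i,j}(G^{-1})_{ij}\inner{z,\psi_j}_{\ambientspace}\psi_i$. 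Hence the projected dynamics is equivalent to
\begin{align*}
  \dot\alpha(t)=f(\alpha(t)),\qquad f(\alpha)\coloneqq G^{-1}\prt[\Big]{\inner[\Big]{\cL\prt[\Big]{\sum_{l}\alpha_l\psi_l},\psi_j}_{\ambientspace}}_{j=1}^m .
\end{align*}

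The first step is to show that $f:\bR^m\to\bR^m$ is continuous. The coordinate map $\alpha\mapsto\sum_l\alpha_l\psi_l$ is a linear isomorphism $\bR^m\to\bW$ and therefore continuous, since all norms on a finite-dimensional space are equivalent. Composing with the continuous $\cL:\bW\to\ambientspace$ and then with the bounded linear functionals $\inner{\cdot,\psi_j}_{\ambientspace}$ gives continuity of each component of $f$.

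Next, I would interpret a solution $\dec_t$ of the projected dynamics in the Carathéodory/classical sense, so that $\alpha$ satisfies $\alpha(t)=\alpha(\bar t)+\int_{\bar t}^t f(\alpha(s))\,\ds$. Since $\alpha$ is continuous on the compact interval $[\bar t,t_f]$, its image is compact, and $f$ is bounded on that image. The integrand is therefore bounded, so $\alpha$ is Lipschitz, and in particular $\alpha\in AC([\bar t,t_f];\bR^m)$. Applying the linear isomorphism back to $\bW$ preserves the Lipschitz property in any norm on $\bW$, which yields $\dec\in AC([\bar t,t_f];\bW)$. Because all norms on $\bW$ are equivalent, this also gives absolute continuity in $H^2_{\mu_0}$ when $\bW\subset H^2_{\mu_0}$, which is the form in which the lemma is used in \Cref{cor:AC}.

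The main obstacle is definitional rather than technical: the lemma must presuppose that $\dec$ is a genuine solution, meaning continuous and satisfying the integral form. Note that continuity alone gives existence via Peano but not uniqueness; uniqueness is not needed here. If instead only an a.e.\ differentiable solution were assumed, I would first establish the integral identity by requiring the solution to be the one produced by Peano's theorem, which is automatically $C^1$ because $f$ is continuous. The a priori boundedness then follows on the compact interval, provided the solution exists on the whole of $[\bar t,t_f]$, which the statement assumes.
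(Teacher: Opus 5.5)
Your proof follows the same route as the paper: coordinates in the basis $\{\psi_i\}$, an invertible Gram matrix, an induced ODE $\dot\alpha=f(\alpha)$ with continuous vector field, and transfer of absolute continuity back to $\bW$ through the bounded coordinate map. Your compactness step (continuous $\alpha$ on $[\bar t,t_f]$, so $f\circ\alpha$ is bounded and the coordinates are Lipschitz) supplies the justification that the paper only asserts for the coordinate curve, and you are right that the lemma must presuppose a classical or Carath\'eodory solution.
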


\begin{proof}
  Since $\dec_t\in\bW$, there exist unique coefficients
  $a_t\in\bR^m$ such that
  \begin{align*}
    \dec_t=\sum_{i=1}^m a_{t,i}\psi_i.
  \end{align*}
  Let $G\in\bR^{m\times m}$ be the Gram matrix
  $G_{ij}\coloneqq\inner{\psi_j,\psi_i}_{\ambientspace}$. Since
  $\{\psi_i\}_{i=1}^m$ is a basis of $\bW$, $G$ is invertible. The projected
  dynamics is equivalent to the finite-dimensional ODE
  \begin{align*}
    \dot a_t=F(a_t),
  \end{align*}
  where $F\prt{a}\coloneqq G^{-1}b\prt{a}$, and
  $b_i\prt{a}
    \coloneqq
    \inner[\Big]{
      \cL\prt[\Big]{\sum_{j=1}^m a_j\psi_j},
      \psi_i
    }_{\ambientspace}$.
  By the continuity of $\cL$ on $\bW$, the vector field
  $F:\bR^m\to\bR^m$ is continuous. Since the projected dynamics is solved
  through the induced finite-dimensional ODE, its coordinate curve satisfies
  $\prt{a_t}_{t\in[\bar t,t_f]}\in AC\prt{[\bar t,t_f];\bR^m}$.
  Define the linear coordinate map $A:\bR^m\to\bW$ by
  \begin{align*}
    A \prt{z}\coloneqq \sum_{i=1}^m z_i\psi_i.
  \end{align*}
  Since $\bR^m$ is finite-dimensional, $A$ is bounded as a map into $\bW$.
  Therefore, for every
  $s,t\in[\bar t,t_f]$,
  \begin{align*}
    \norm{\dec_t-\dec_s}_{\bW}
    =
    \norm{A\prt{a_t-a_s}}_{\bW}
    \leq
    \norm{A}_{\Lin\prt{\bR^m;\bW}}\abs{a_t-a_s}_{\bR^m}.
  \end{align*}
  Since $\prt{a_t}_{t\in[\bar t,t_f]}\in AC\prt{[\bar t,t_f];\bR^m}$, the last estimate implies
  $\dec\in AC\prt{[\bar t,t_f];\bW}$.
\end{proof}

\end{document}